\numberwithin{equation}{section}
\newtheorem{theorem}{{\textbf Theorem}}[section]
\newtheorem{prop}[theorem]{{\textbf Proposition}}
\newtheorem{cor}[theorem]{{\textbf Corollary}}
\newtheorem{lemma}[theorem]{{\textbf Lemma}}
\newtheorem{rmk}[theorem]{{\textbf Remark}}
\newenvironment{remark}{\begin{rmk}\rm}{\end{rmk}}
\newtheorem{definition}[theorem]{Definition}
\def \Z {\mathbb{Z}}
\def \C {\mathbb{C}}
\def \cB {\mathcal{B}}
\def \cD {\mathcal{D}}
\def \cR {\mathcal{R}}
\def \cB {\mathcal{B}}
\def \cC {\mathcal{C}}
\def \cP {\mathcal{P}}
\def \Qt{\widetilde{Q}}
\def \Rn{\cR(n)}
\def \Dn{\cD(n)}
\def \bn{\begin{enumerate}}
\def \en{\end{enumerate}}
\def \bdm{\begin{displaymath}}
\def \edm{\end{displaymath}}
\def \bp{\begin{proof}}
\def\ep{\end{proof}}
\def\U+e{(U^+)^e}
\def\be{\begin{equation}}
\def\ee{\end{equation}}
\def\lg{\mathrm{LG}(n)}
\def\In{\mathcal{I}_n}
\def\Sp{\mathrm{Sp}}
\def\cW{\mathcal{W}}
\def\cE{\mathcal{E}}
\def\bbY{\mathbb{Y}}
\def\bbX{\mathbb{X}}
\def\tW{\widetilde{W}}
\def\Mor{\mathrm{Mor}}
\def\rank{\mathrm{rk}\hspace{0.02in}}
\newcommand{\cO}{\mathcal O}
\newcommand{\Sym}{\mathrm{Sym}}
\newcommand{\Supp}{\mathrm{Supp}}
\newcommand{\Ker}{\mathrm{Ker}}
\newcommand{\Gr}{\mathrm{Gr}}
\newcommand{\Chow}{\mathrm{CH}}
\newcommand{\tN}{\widetilde{N}}
\newcommand{\bE}{\overline{E}}
\newcommand{\LQ}{LQ_e (W)}
\newcommand{\LQo}{{LQ}_e^\circ(W)}
\newcommand{\scW}{{\mathscr{W}}}
\newcommand{\scWb}{{\mathscr{W}|_b}}
\newcommand{\scWbo}{{\mathscr{W}|_{b_0}}}
\newcommand{\ev}{\mathrm{ev}}
\newcommand{\Oc}{{\mathcal O}_{C}}
\newcommand{\Quot}{\mathrm{Quot}}
\newcommand{\LG}{\mathrm{LG}}
\newcommand{\isom}{\xrightarrow{\sim}	}
\title[Counting maximal Lagrangian subbundles]{Counting maximal Lagrangian subbundles
over an algebraic curve}
\author{Daewoong Cheong}
\address{Chungbuk National University, Department of Mathematics, Chungdae-ro 1, Seowon-Gu, Cheongju City, Chungbuk 28644, Korea}
\email{daewoongc@chungbuk.ac.kr}
\author{Insong Choe}
\address{Department of Mathematics, Konkuk University, 1 Hwayang-dong, Gwangjin-Gu, Seoul 143-701, Korea}
\email{ischoe@konkuk.ac.kr}
\author{George H.\ Hitching}
\address{Oslo Metropolitan University, Postboks 4, St. Olavs plass, 0130 Oslo, Norway}
\email{gehahi@oslomet.no}
\begin{document}

\begin{abstract}
Let $C$ be a smooth projective curve and $W$ a symplectic bundle over $C$. Let $\LQ$ be the Lagrangian Quot scheme parametrizing Lagrangian subsheaves $E \subset W$ of degree $e$. We give a closed formula for intersection numbers on $\LQ$. As a special case, for $g \ge 2$, we compute the number of  Lagrangian subbundles of maximal degree of a general stable symplectic bundle, when this is finite. This is a symplectic analogue of Holla's enumeration of maximal subbundles in \cite{Ho}.
\end{abstract}

\maketitle

\section{Introduction}
Let $C$ be a smooth projective curve of genus $g \ge 2$, and $V$ a vector bundle of  rank $r$ and degree $d$ over $C$. For $1 \leq k \leq r-1$, a rank $k$ subbundle $E$ of $V$ is called a \textit{maximal subbundle} if $\deg(E)$ is maximal among all subbundles of rank $k$. Consider the following enumerative problem.

\smallskip
{\it What is the number of rank $k$ maximal subbundles of $V$, when it is finite?}
\smallskip

Classically, Segre \cite{Se} and Nagata \cite{Na} proved that if $d \not\equiv g \mod 2$, then a general stable bundle of rank two has $2^g$ maximal line subbundles. Later, Holla \cite{Ho} gave an explicit formula enumerating maximal subbundles in general (see also \cite{LN}, \cite{Ox} and \cite{Te}).

The goal of this article is to give an analogous result for symplectic bundles. To pose the problem, let us recall some basic notions. Let $L$ be a line bundle of degree $\ell$. An \textit{$L$-valued symplectic bundle} is a vector bundle $W$ on $C$ equipped with a nondegenerate skewsymmetric bilinear form $\omega \colon W \otimes W \rightarrow L$. Such a $W$ has rank $2n$ for some $n \ge 1$. From the induced isomorphism $W \cong W^\vee \otimes L$, we have $\deg (W) = n\ell$. In fact, it can be shown that $\det (W) \cong L^n$ (see \cite[\S\:2]{BG}).

A subsheaf $E \subset W$ is called \textit{isotropic} if $\omega ( E \otimes E ) = 0$. By linear algebra, $\rank(E) \le n$. If $\rank (E) = n$ then $E$ is said to be \textit{Lagrangian}. A \textit{maximal Lagrangian subbundle} of $W$ is one whose degree is maximal among all Lagrangian subsheaves of $W$.

Let $W$ be an $L$-valued symplectic bundle over $C$. For each integer $e$, let $\LQ$ be the Lagrangian Quot scheme parametrizing Lagrangian subsheaves $[E \to W]$ with $\deg(E) = e$; equivalently, quotients $[q \colon W \to F]$ with $F$ coherent of rank $n$ and degree $n\ell - e$, and $\Ker (q)$ isotropic. The scheme $\LQ$ is projective, and contains the quasiprojective subscheme $\LQo$ consisting of Lagrangian subbundles. By \cite[Proposition 2.4]{CCH}, the expected dimension of $\LQ$ is
\[ D ( n, e, \ell ) \ := \ -(n+1)e - \frac{n(n+1)}{2}(g-1-\ell) . \]
Based on results in \cite{CH2}, we will see the following.

\newtheorem*{prop-zero}{Proposition \ref{Prop-zero}}
\begin{prop-zero}
Let $L$ be a line bundle of degree $\ell$ and $W$ an $L$-valued symplectic bundle of rank $2n$ which is general in moduli. Write
\[ e_0 \ := \ \left\lceil -\frac{1}{2}n(g-1-\ell) \right\rceil . \]
\begin{enumerate}
\item A maximal Lagrangian subbundle of $W$ has degree $e_0$, and $LQ_{e_0}(W) = LQ_{e_0}^\circ(W)$.
\item If $n(g-1-\ell)$ is even, then $LQ_{e_0}(W)$ is a smooth scheme of dimension zero.
\end{enumerate}
\end{prop-zero}

\noindent This indicates how Lagrangian Quot schemes enter the picture. Our problem reduces to evaluating the integral
\[ \int_{[LQ_{e_0}(W)]} 1 \ \null_. \]
\smallskip
To compute this integral, more generally we find a closed formula for integrals $\int_{[\LQ]} P$, where $W$ is an arbitrary symplectic bundle, $e$ an integer and $P$ a certain cohomology class on $\LQ$. To obtain the desired formula, we follow essentially the method of Holla \cite{Ho} for the case of vector bundles. An important ingredient in the argument of \cite{Ho} is the fact, proven in \cite[{\S} 6]{PoRo}, that for small enough values of $e$, the scheme $\Quot_{k, e} ( W )$ subsheaves of rank $k$ and degree $e$ in $W$ is of the expected dimension, and that a general point of any component corresponds to a vector subbundle. For the present work, an analogous statement on Lagrangian Quot schemes is required. This follows from \cite{CCH}. (We mention that in both \cite{PoRo} and \cite{CCH} the respective Quot schemes are even shown to be irreducible.)

\bigskip

Let us give a sketch of the strategy for obtaining the formula. We begin with some terminology.

\begin{definition} We say that $\LQ$ \textit{has property $\cP$} if every component of $\LQ$ is generically smooth of the expected dimension $D ( n, e, \ell )$, and moreover a general point corresponds to a subbundle of $W$. \end{definition}

When $\LQ$ has property $\cP$, the fundamental class $[\LQ]$ is well behaved. In this case, $\int_{[\LQ]} P$ is an intersection number $N^w_{C, e} ( W ; P)$, which for general values of the parameters counts points lying in the saturated part $\LQo$. In general, however, $\LQ$ may have pathologies. Therefore we define another intersection number $\widetilde{N}^w_{C, e} ( W ; P)$, valid for any nonempty $\LQ$, as follows.

Firstly, we embed $\LQ$ in $LQ_e ( \tW )$ where $\tW$ is a symplectic Hecke transform of $W$ such that $LQ_e ( \tW )$ has property $\cP$. Then we set
\[ \widetilde{N}^w_{C, e} ( W ; P) \ := \ \int_{[ LQ_e ( \tW ) ]} P Q \]
for a suitable class $Q$. If the original $\LQ$ has property $\cP$, then $Q$ is just the class of the image of $\LQ \hookrightarrow LQ_e ( \tW )$. The key point, however, is that $Q$ is defined only in terms of a choice of Lagrangian subspaces of a finite number of fibers of $\tW$, and so makes sense even if $\LQ$ is not well behaved. Once $\widetilde{N}^w_{C, e} ( W ; P)$ is shown to be independent of the choice of $\tW$, it is straightforward to see that the two definitions of intersection number coincide when $\LQ$ has property $\cP$.

We then use this intersection theory to answer the enumerative problem stated at the outset. As the integral $\int_{[ \LQ ]} 1$ is intractable in this form, we follow \cite{Ho} and link $\tW$ with the trivial symplectic bundle $\Oc^{2n}$ by another sequence of Hecke transforms. Then the integral coincides with a genus $g$ Gromov--Witten invariant of the Lagrangian Grassmannian $\LG ( \C^{2n} )$. Using results from \cite{CMP}, \cite{Che1} and \cite{Che2}, the latter can be connected to a genus zero Gromov--Witten invariant of $\LG ( \C^{2n} )$, whose closed formula is given by a Vafa--Intriligator-type formula.

For $\LQ$ not having property $\cP$, this approach is an alternative to the use of virtual classes in developing an intersection theory, as done in \cite{MO} for the usual Quot schemes. It would be interesting to follow the approach of \cite{MO} for the Lagrangian Quot schemes.

\bigskip

The paper is organized as follows. In \S\:2, we review the quantum cohomology of Lagrangian Grassmannians. In \S\:3, we give basic properties of the Lagrangian Quot scheme $\LQ$ and discuss property $\cP$ and the nonsaturated locus. In \S\:4, we define Lagrangian degeneracy loci on $\LQ$ and investigate their properties and behavior under Hecke transforms. In \S\:5, we develop an intersection theory on $\LQ$ and find relations among intersection numbers. In \S\:6, we give our main result on enumerating maximal Lagrangian subbundles (Corollary \ref{counting_formula}). At the end, the numbers are explicitly computed for ranks two and four (Corollary \ref{examples}).

\subsection*{Acknowledgements} The first and second authors would like to thank Oslo Metropolitan University for hospitality during a stay in T\o nsberg in June 2018, when this work was completed. The first and second authors were supported by Basic Science Research Programs through the National Research Foundation of Korea (NRF) funded by the Ministry of Education (NRF-2016R1A6A3A11930321 and NRF-2017R1D1A1B03034277 respectively). The third author sincerely thanks Konkuk University, Hanyang University and the Korea Institute of Advanced Study for financial support and hospitality in March 2017.

\subsection*{Conventions and notation}
Throughout this paper, we work over the field $\C$ of complex numbers. Unless otherwise stated, $C$ is a complex projective smooth curve of genus $g \ge 0$, and $W$ is an $L$-valued symplectic bundle of rank $2n \ge 2$, where $\ell := \deg(L)$. The fiber at $p \in C$ of a bundle $V \to C$ is denoted by $V_p$. We consider points of Quot schemes as subsheaves, and use the notation $[E \to W]$.
%We denote a point of the Lagrangian Quot scheme $\LQ$ either as a subsheaf $[E \to W]$ or a quotient $[W \to W/E]$ interchangeably.

\section{Quantum cohomology of Lagrangian Grassmannians}

In this section, we record some known facts on quantum cohomology of Lagrangian Grassmannians.

\subsection{Notations} \label{CombinatoricNotation} Fix a positive integer $n$. A \textit{partition} $\lambda$ is a weakly decreasing sequence of nonnegative integers $\lambda = (\lambda_1 , \lambda_2 , \ldots , \lambda_n)$. The nonzero $\lambda_i$ are called the \textit{parts} of $\lambda$. The number of parts is called the \textit{length} of $\lambda$ and is denoted $l(\lambda)$. The sum $\sum_{i=1}^n \lambda_i$ is called the \textit{weight} of $\lambda$, and denoted $|\lambda|$.

Denote by $\Rn$ the set of all partitions $(\lambda_1 , \ldots , \lambda_n)$ such that $\lambda_1 \leq n$. A partition $\lambda$ is called \textit{strict} if $\lambda_1 > \cdots >\lambda_l$ and $\lambda_{l+1} = \cdots = \lambda_n = 0$, where $l = l(\lambda)$. Let $\Dn$ be the set of all strict partitions $(\lambda_1, \ldots ,\lambda_n) \in \Rn$ such that $\lambda_1 \leq n$. We usually write $(\lambda_1, \ldots ,\lambda_l)$ for a (strict) partition $\lambda = (\lambda_1, \ldots , \lambda_l, 0, \ldots , 0)$ of length $l$, if no confusion should arise. For $\lambda \in \Dn$, let $\lambda^\prime$ be the \textit{dual} partition of $\lambda$, whose parts complement the parts of $\lambda$ in the set $\{ 1 , 2, \ldots , n \}$. Set $\rho_n = (n, n-1 , \ldots , 1) \in \Dn$. 
 
Later, we shall also use the following notations to state the Vafa--Intriligator-type formula in \S\:\ref{GW_for_LG(n)}. For $n = 2m+1$, set
\[
\mathcal{T}_n \ := \ \{ J = (j_1, \ldots , j_n) \in \mathbb{Z}^n \: | \: -m \leq j_1 < \dots < j_n \leq 3m+1 \} ,
\]
and for $n = 2m$, set
\[
\mathcal{T}_n \ := \ \left\{ J = (j_1 , \ldots , j_n) \in \left( \mathbb{Z} + \frac{1}{2} \right)^n \: {\big|} \: -m +\frac{1}{2} \leq j_1 < \dots < j_n \leq 3m- \frac{1}{2} \right\} .
\]
For $J = (j_1, \ldots , j_n) \in \mathcal{T}_n$ and $\zeta := \exp \left({\frac{\pi \sqrt{-1}}{n}}\right)$, we write $\zeta^J := (\zeta^{j_1}, \ldots ,\zeta^{j_n})$. Define a subset $\In$ of $\mathcal{T}_n$ by
\[ \In \ := \ \left\{ J = (j_1, \ldots , j_n) \in \mathcal{T}_n \hspace{0.05in}| \: \zeta^{j_k} \ne -\zeta^{j_l}  \text{ for } k \neq l \right\} . \]
Note that $\prod_k \zeta^{j_k} = \pm 1$ for $J = (j_1 , \ldots , j_n) \in \In$. We put
$$ \In^e \ := \ \left\{ J \in \In \hspace{0.05in}\big|\hspace{0.05in} {\prod}_k \zeta^{j_k} = 1 \right\} . $$

\subsection{Symmetric polynomials}
Let $X = (x_1, \ldots , x_n)$ be an $n$-tuple of variables. For $i=1, \ldots , n$, let $H_i (X)$ (resp. $E_i (X)$) be the $i$-th complete (resp., elementary) symmetric function in $X$. Then for any partition $\lambda$, the Schur polynomial $S_{\lambda}(X)$ is defined by
$$ S_{\lambda}(X) \ := \ \det \left[ H_{\lambda_i+j-i}(X) \right]_{1 \leq i , j \leq n } $$
where $H_0 (X) = 1$, and $H_k (X) = 0$ for $k < 0$.

The $\Qt$-polynomials of Pragacz and Ratajski \cite{PrRa} are indexed by the elements of $\Rn$. For $i \geq j$, define
\begin{displaymath}
\Qt_{i,j}(X) \ = \ E_i (X) E_j (X) + 2 \sum_{k=1}^{j}(-1)^k E_{i+k} (X) E_{j-k} (X) .
\end{displaymath}
For any partition $\lambda$, not necessarily strict, and for $r = 2 \left\lfloor (l(\lambda)+1)/2 \right\rfloor$, let $B_\lambda$ be the $r \times r$ skewsymmetric matrix whose $(i, j)$-th entry is given by $\Qt_{\lambda_i , \lambda_j} (X)$ for $i < j$. The $\Qt$-polynomial associated to $\lambda$ is defined by
\[ \displaystyle \Qt_{\lambda}(X) \ = \ \textrm{Pfaff} (B_{\lambda}) . \]
Note that from the definition of $\Qt_{\lambda}(X)$, for $\lambda = (k)$ with $0 \leq k \leq n$ we have $\Qt_{(k)}(X) = E_k (X)$. We often write $\Qt_k(X)$ for $\Qt_{(k)}(X)$.

\subsection{Degeneracy loci of type C}

Let $W$ be a vector bundle of rank $2n$ over a scheme $Z$, equipped with a symplectic form $\omega \colon W \otimes W \rightarrow \cO_Z$. Let $E$ be a vector bundle of rank $n$. Fix a homomorphism of vector bundles $\psi \colon E \rightarrow W$ with isotropic image; equivalently, such that the composite $E \rightarrow W \rightarrow W^\vee \xrightarrow{\psi^t} E^\vee$ is zero, where $W \rightarrow W^\vee$ is the isomorphism induced by the symplectic form $\omega$. Assume that $W$ admits a complete flag of isotropic subbundles
$$ H_\bullet : \ 0 \ = \ H_0 \ \subset \ H_1 \ \subset \ \cdots \ \subset \ H_n , $$
where $\rank ( H_k ) = k$. For any subbundle $G \subset W$, set
\[ G^\perp \ := \ \{ w \in W \: | \: \omega ( w \otimes v ) = 0 \text{ for all } v \in G \} , \]
the orthogonal complement of $G$ with respect to the symplectic form.

\begin{definition} \label{degeneracy-first} The degeneracy locus of type $C$ associated to a strict partition $\lambda \in \Dn$ is defined as
\begin{multline*} Z_\lambda ( H_\bullet ) \ := \ \left\{ z \in Z \hspace{0.04in} | \: \rank \left( E \ \to W/H_{n+1-\lambda_i}^\perp \right)_z \ \leq \ n + 1 - i - \lambda_i \right. \\ \left. \hbox{for each $i$} \right\}. \end{multline*}
\end{definition}

\noindent Note that $(W / H_{n+1-\lambda_i}^\perp )_z \cong \left( {H_{n+1-\lambda_i}^\vee} \right)_z$.

Degeneracy loci of type A are defined analogously, and their classes can be expressed in terms of the Chern classes of the vector bundles involved (see \cite{FuPr}). For type C, we have a similar expression when $\psi$ is everywhere injective. For $F$ a bundle of rank $n$ and $\lambda$ a partition, the class $\Qt_\lambda (F)$ is defined as $\Qt_\lambda(X)$ with the variable $x_i$ specialized to the $i$th Chern root of $F$. Recall that if $\lambda = (k)$ where $1 \leq k \leq n$, then $\Qt_{\lambda}(X) = E_k(X)$. This implies that $\Qt_{\lambda}(F) = c_k(F)$.

The following is a special case of \cite[Corollary 4]{KT2002}.

\begin{prop}\label{Prop:Lagrangian-Loci} Suppose that $Z$ is Cohen--Macaulay, and that the subbundles $H_1 , \ldots , H_n$ are trivial over $Z$. Assume that $Z_\lambda (H_\bullet)$ is of pure codimension $|\lambda|$. If $\psi \colon E \to W$ defines a Lagrangian subbundle, then in the Chow group $\Chow^{|\lambda|}(Z)$, we have $[Z_{\lambda}( {H}_\bullet)] \ = \ \Qt_\lambda ( {E}^\vee )$. \end{prop}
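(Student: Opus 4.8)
The plan is to deduce the statement from \cite[Corollary 4]{KT2002} by checking that the hypotheses of that result are met in our setting, and matching the combinatorial indexing. First I would recall the set-up of Kresch--Tamvakis: they consider a symplectic vector bundle $W$ of rank $2n$ over a Cohen--Macaulay base $Z$, a complete flag $H_\bullet$ of isotropic subbundles, and a Lagrangian subbundle $E \hookrightarrow W$, and they express the class of the associated type $C$ degeneracy locus as a $\Qt$-polynomial. The key observation is that, because $E$ is a \emph{subbundle} (i.e.\ $\psi$ is everywhere injective with locally free cokernel), the sequence $0 \to E \to W \to W/E \to 0$ splits locally and the symplectic form identifies $W/E \cong E^\vee \otimes \cO_Z = E^\vee$; thus the relevant Chern-class data is entirely controlled by $E^\vee$, which is why the answer is $\Qt_\lambda(E^\vee)$ rather than a ratio of Chern classes of several bundles.

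The steps, in order: (1) Translate Definition \ref{degeneracy-first} into the rank-condition form used in \cite{KT2002}. Since $H_1, \ldots, H_n$ are trivial over $Z$, the flag $H_\bullet$ plays the role of a fixed reference isotropic flag, and the isomorphism $W/H_{n+1-\lambda_i}^\perp \cong H_{n+1-\lambda_i}^\vee$ noted after the definition lets us rewrite each condition $\rank(E \to W/H_{n+1-\lambda_i}^\perp)_z \le n+1-i-\lambda_i$ as a rank condition on the composite $E \to H_{n+1-\lambda_i}^\vee$, matching their indexing conventions for strict partitions $\lambda \in \Dn$. (2) Verify the dimension/codimension hypothesis: we have assumed $Z_\lambda(H_\bullet)$ has pure codimension $|\lambda|$, which is exactly the expected codimension in the type $C$ degeneracy locus formula, so \cite[Corollary 4]{KT2002} applies without excess-intersection corrections. (3) Identify the output class. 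In \cite{KT2002} the class is given by a $\Qt$-polynomial in Chern roots of the quotient $W/E$ (or equivalently of $E^\vee$, using triviality of the $H_k$ so that no Chern classes of the reference flag enter); invoking $\det W \cong \cO_Z$ and the isomorphism $W/E \cong E^\vee$ from the symplectic pairing gives $[Z_\lambda(H_\bullet)] = \Qt_\lambda(E^\vee)$ in $\Chow^{|\lambda|}(Z)$. (4) As a sanity check, specialize to $\lambda = (k)$: then $|\lambda| = k$, the locus is a single rank condition, and $\Qt_{(k)}(E^\vee) = E_k(E^\vee) = c_k(E^\vee)$, recovering the classical type A Chern-class formula, consistent with the remarks preceding the proposition.

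The main obstacle I expect is purely bookkeeping: reconciling the precise indexing of the flag steps and rank jumps in Definition \ref{degeneracy-first} (which uses $H_{n+1-\lambda_i}^\perp$ and the bound $n+1-i-\lambda_i$) with the conventions of \cite{KT2002}, where the degeneracy locus is typically indexed by a strict partition via a different but equivalent normalization of the reference flag. One must check that the dual partition $\lambda'$ does not sneak in, i.e.\ that the locus as defined here corresponds to $\Qt_\lambda(E^\vee)$ and not $\Qt_{\lambda'}(E^\vee)$ or $\Qt_\lambda(E)$. This is resolved by carefully tracking the orthogonal complements $H_{n+1-\lambda_i}^\perp$ (note $\rank H_j^\perp = 2n-j$) and using $(W/H_j^\perp)_z \cong (H_j^\vee)_z$ to see that increasing $\lambda_i$ shrinks the target $H_{n+1-\lambda_i}^\vee$, which is the correct monotonicity for the $\Qt_\lambda(E^\vee)$ formula. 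Beyond this indexing check, the Cohen--Macaulay and triviality hypotheses make the application of \cite{KT2002} essentially immediate.
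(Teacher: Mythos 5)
Your proposal is correct and follows the same route as the paper, whose proof consists precisely of invoking \cite[Corollary 4]{KT2002} together with the discussion on p.~1718 of that reference; your additional bookkeeping (rewriting the rank conditions via $W/H_j^\perp \cong H_j^\vee$, checking the pure-codimension hypothesis, and identifying $W/E \cong E^\vee$ through the symplectic form) is just an expanded version of that citation. The indexing concern you raise is resolved exactly as you suspect, since for a Lagrangian fiber the condition $\rank(E \to W/H_{n+1-\lambda_i}^\perp)_z \le n+1-i-\lambda_i$ is equivalent to $\dim(E_z \cap (H_{n+1-\lambda_i})_z) \ge i$, which is the standard Schubert condition indexed by $\lambda$ itself.
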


\begin{proof} See \cite[Corollary 4]{KT2002} and the discussion on \cite[p.\ 1718]{KT2002}. \end{proof}

We remark that the condition that $\psi \colon E \to V$ be a vector bundle injection is necessary in general. A counterexample is described in \cite[\S \: 4.5]{KT2002} in a case where $\psi$ is not everywhere injective.

\subsection{Cohomology of Lagrangian Grassmannians}

Let $V$ be a vector space of dimension $2n$ equipped with a symplectic form $\omega \colon V \otimes V \rightarrow \C$. Let $\LG(V)$ be the Lagrangian Grassmannian parametrizing Lagrangian subspaces in $V$. Over $\LG(V)$, there is a universal exact sequence of bundles
\[ 0 \ \rightarrow \ {\mathbf E} \ \rightarrow \ {\mathbf V} \ \rightarrow \ {\mathbf E}^\vee \ \rightarrow \ 0 , \]
where ${\mathbf V} = \LG(V) \times V$. Clearly ${\mathbf V}$ admits a symplectic form induced from $V$, and the subbundle ${\mathbf E} \subset {\mathbf V}$ is isotropic. Let
$$ H_{\bullet} : \ H_1 \ \subset \ H_2 \ \subset \ \cdots \ \subset \ H_{n-1} \ \subset \ H_n $$
be a complete isotropic flag in $V$. This induces a complete flag of isotropic subbundles
$$ {\mathbf H}_{\bullet} : \ {\mathbf H}_1 \ \subset \ {\mathbf H}_2 \ \subset \ \cdots \ \subset \ {\mathbf H}_{n-1} \ \subset \ {\mathbf H}_n , $$
in ${\mathbf V}$, where ${\mathbf H}_k := \LG(V) \times H_k$. Then for strict partitions $\lambda \in \Dn$, the degeneracy loci
$Z_{\lambda}( {\mathbf H}_{\bullet})$ are called  \textit{Schubert varieties}. By Proposition \ref{Prop:Lagrangian-Loci}, we obtain
\begin{equation} \label{trivialcase}
\left[ Z_{\lambda}( {\mathbf H}_{\bullet}) \right] \ = \ \Qt_{\lambda}( {\mathbf E}^\vee ) .
\end{equation}
It is well known that the classes $\{ \sigma_\lambda := [Z_{\lambda}( {\mathbf H}_{\bullet})] \:|\: \lambda \in \Dn \}$ form a basis of the Chow group of $\LG(V)$. For $1 \le k \le n$, we have the length $1$ partition $(k)$. We write $\sigma_k$ for the \textit{special Schubert class} $\sigma_{(k)}\in \mathrm{CH}^k(\LG(V))$. %\in H^{2|\lambda_i|} ( \lg ) \cong \Chow^{2|\lambda_i|}( \lg )$.

\subsection{A Vafa--Intriligator-type formula} \label{GW_for_LG(n)}

Fix a symplectic vector space $V = \C^{2n}$, and write $\lg$ for $\LG(\C^{2n}).$ In this subsection, we state a Vafa--Intriligator-type formula for $\lg$, which computes the Gromov--Witten invariants. We begin by defining these invariants.

The \textit{degree} of a morphism $f \colon C \rightarrow \lg$ is defined as the intersection number
\[ \int_{[\lg]} f_*[C] \cdot \sigma_1 . \]
Such an $f$ defines a Lagrangian subbundle $E_f$ of the trivial symplectic bundle $\Oc^{\oplus 2n}$, and $\deg(E_f) = -\deg(f)$. The Gromov--Witten invariant is informally defined as follows. For the precise definition, see \cite{RuTi}.

\begin{definition} \label{GW-for-LG} Let $p_1, \ldots , p_m$ be distinct points of $C$. Let ${\lambda^1}, \ldots , {\lambda^m} \in \Dn$ be strict partitions. Fix $d \in \Z$. We define the \emph{Gromov--Witten invariant} $\langle \sigma_{\lambda^1}, \ldots ,\sigma_{\lambda^m} \rangle_{C,d}$ as follows. If
\begin{equation} \label{dimcondition}
 \sum_{j=1}^m | \lambda^j | \ = \ \frac{1}{2}n(n+1)(1-g) + d(n+1) , 
 \end{equation}
then $\langle \sigma_{\lambda^1}, \ldots , \sigma_{\lambda^m} \rangle_{C, d}$ is the number of morphisms $f \colon C \rightarrow \lg$ of degree $d$, such that for each $i$, we have $f(p_i) \in Z_{\lambda^i}(\gamma_i \cdot \mathbf{H}_{\bullet})$ for a general choice of symplectic transformation $\gamma_i \in \Sp_{2n} (\C)$.

If (\ref{dimcondition}) does not hold, we define $\langle \sigma_{\lambda^1}, \ldots , \sigma_{\lambda^m}\rangle_{C,d}$ to be zero. \end{definition}

Now it is well known (see \cite[p. 262]{RuTi}) that the Gromov--Witten invariant is independent of the points $p_i$ and the curve $C$, depending only on the genus $g$. Thus we write $\langle \sigma_{\lambda^1},\ldots , \sigma_{\lambda^m}\rangle_{g, d}$ for $\langle \sigma_{\lambda^1},\ldots , \sigma_{\lambda^m}\rangle_{C, d}$.

\bigskip

The (small) quantum cohomology ring of $\lg$ is defined via the genus zero three-point Gromov--Witten invariants \cite{KT}. Let $q$ be a formal variable of degree $n+1$. The ring $qH^*(\lg,\Z)$ is isomorphic as a $\Z[q]$-module to $H^*(\lg, \Z) \otimes_{\Z} \Z[q]$. The multiplication in $qH^*(\lg,\Z)$ is given by the formula
\[
\sigma_\lambda \cdot \sigma_\mu \ = \ \sum_{d \ge 0 } \sum_\nu \left\langle \sigma_\lambda , \sigma_\mu, \sigma_{\nu'} \right\rangle_{0,d} \sigma_\nu \: q^d,
\]
where $\nu$ ranges over all strict partitions with $|\nu| = |\lambda| + |\mu| - (n+1)d$. Note that the specialization of the (complexified) quantum cohomology ring at $q=1$ is given by
\[
qH^*(\lg, \C)_{q=1} \ := \ qH^*(\lg, \C) \otimes \C[q]/(q-1).
\]
As a complex vector space, this is isomorphic to $H^*( \lg , \C )$.

Now we are ready to give a Vafa--Intriligator-type formula for $\lg$ for an arbitrary genus $g$.

\begin{prop} \label{vafa-Int-g}
Let $C$ be a curve of genus $g$ with $m$ marked points. For strict partitions $\lambda^1, \lambda^2, \dots, \lambda^m \in \Dn$ and $d \ge 0$, the genus $g$ Gromov--Witten invariant for $\lg$ is computed as
\[
\left\langle \sigma_{\lambda^1}, \sigma_{\lambda^2}, \ldots , \sigma_{{\lambda}^m} \right\rangle_{g,d} \ := \ {2^{n(g-1)-d}} \sum_{J\in \mathcal{I}_{n+1}^e} {S_{\rho_{n}}(\zeta^J)}^{g-1} {\Qt_{\lambda^1}(\zeta^J) \cdots \Qt_{\lambda^m}(\zeta^J)}
\]
 whenever $\sum_{i=1}^m |\lambda^i| = \frac{n(n+1)}{2}(1-g)+(n+1)d$, and zero otherwise. %$\left\langle \sigma_{\lambda^1},\sigma_{\lambda^2}, \ldots ,\sigma_{{\lambda}^{m}} \right\rangle_{g,d}=0$ otherwise.
\end{prop}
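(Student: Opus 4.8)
The plan is to reduce the genus $g$ invariant to the genus $0$ invariants computed by the classical Vafa--Intriligator-type formula for $\lg$, using the standard "quantum cohomology as a Frobenius algebra" machinery. First I would recall that the genus $g$ Gromov--Witten invariants of any homogeneous space can be extracted from the quantum cohomology ring equipped with its Poincar\'e pairing: if we write the structure constants of $qH^*(\lg,\C)_{q=1}$ in a basis $\{\sigma_\lambda\}$ and denote by $\eta$ the (quantum-deformed) Poincar\'e pairing, then the $m$-point genus $g$ invariant is obtained by inserting $g$ copies of the "handle operator" $H = \sum \sigma_\mu \star \sigma_{\mu'}$ (equivalently, by taking $g$-fold composition in the Frobenius structure) into the genus $0$ $m$-point function. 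Concretely, $\langle \sigma_{\lambda^1},\dots,\sigma_{\lambda^m}\rangle_{g,d}$ is the coefficient of $q^d$ in the trace-type expression $\sum \langle \sigma_{\lambda^1}\star\cdots\star\sigma_{\lambda^m}\star H^{\star g}, \mathrm{pt}\rangle$ — this is the genus-reduction formula of Witten, made rigorous in the Gromov--Witten setting. For $\lg$ I would cite \cite{CMP}, \cite{Che1}, \cite{Che2} (as promised in the introduction) for the precise genus-reduction statement, so that the task becomes a computation inside $qH^*(\lg,\C)_{q=1}$.

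Next I would diagonalize the ring $qH^*(\lg,\C)_{q=1}$. The key input is that this ring is semisimple, with idempotents indexed by a finite set of points, and that these points are exactly the tuples $\zeta^J$ for $J \in \mathcal{I}_{n+1}^e$: this is the content of the presentation of the quantum cohomology of $\lg$ in terms of the $\Qt$-polynomials, where $\sigma_\lambda \mapsto \Qt_\lambda$ under evaluation at the common zeros of the relevant relations (the relations are deformations of $\Qt_{n+1}=\cdots=\Qt_{2n}=0$). At each such common zero $\zeta^J$, the value of the "normalizing" class that governs the Poincar\'e pairing is $S_{\rho_n}(\zeta^J)$ — this is the Lagrangian-Grassmannian analogue of the Jacobian/Hessian factor appearing in every Vafa--Intriligator formula, and the power of $2$ bookkeeping ($2^{n(g-1)-d}$) comes from the normalization of the symplectic pairing and the degree-$(n+1)$ variable $q$ being set to $1$ (each handle contributes a factor with a power of $2$, and the $q^d$-extraction shifts it by $d$). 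I would assemble these facts and cite \cite{KT}, \cite{CMP}, \cite{Che1}, \cite{Che2} for the diagonalization and the identification of the idempotents and the pairing.

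With diagonalization in hand, the computation is mechanical: in the idempotent basis, $\star$-multiplication is coordinatewise, the handle element $H$ acts at the point $\zeta^J$ by multiplication by $S_{\rho_n}(\zeta^J)^{-1}$ times the appropriate power of $2$, and the functional $\langle -, \mathrm{pt}\rangle$ picks out $\sum_J (\text{value at } \zeta^J) \cdot S_{\rho_n}(\zeta^J)$. Running $g$ handles through this gives the factor $\prod_J S_{\rho_n}(\zeta^J)^{g-1}$ pointwise, and inserting the $m$ observables gives $\prod_i \Qt_{\lambda^i}(\zeta^J)$, so the $q^d$-coefficient is exactly $2^{n(g-1)-d}\sum_{J\in\mathcal{I}_{n+1}^e} S_{\rho_n}(\zeta^J)^{g-1}\,\Qt_{\lambda^1}(\zeta^J)\cdots\Qt_{\lambda^m}(\zeta^J)$, with the dimension constraint \eqref{dimcondition} forcing vanishing outside the stated degree. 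I expect the main obstacle to be bookkeeping rather than conceptual: pinning down the exact power of $2$ and the exact exponent of $S_{\rho_n}$ in the handle operator for $\lg$ (equivalently, computing the quantum Euler class / the Poincar\'e-dual of the diagonal at each idempotent), and making sure the $q=1$ specialization together with the degree-$d$ grading is tracked correctly so that the $d$-dependence lands as $2^{-d}$. This is precisely where the detailed results of \cite{CMP}, \cite{Che1}, \cite{Che2} are invoked, and I would present the genus-$0$ case (where the formula is already in the literature) as the base case and the handle-insertion as the induction step in $g$.
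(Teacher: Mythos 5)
Your route is essentially the paper's: the proof there simply quotes the trace formula $\langle \sigma_{\lambda^1},\ldots,\sigma_{\lambda^m}\rangle_{g,d} = \mathrm{tr}\left( [\mathcal{E}^{g-1}\sigma_{\lambda^1}\cdots\sigma_{\lambda^m}] \right)$ from \cite{CMP}, with $\mathcal{E}$ the quantum Euler class of \cite{Ab}, and then evaluates it via the eigenvalues of quantum multiplication operators on $qH^*(\lg,\C)_{q=1}$ computed in \cite{Che2}, the genus zero case being \cite{Che1}; your handle-operator reduction and diagonalization at the points $\zeta^J$, $J\in\mathcal{I}_{n+1}^e$, is the same argument in Frobenius-algebra language. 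One normalization slip, though: as written, you have the handle element acting at $\zeta^J$ by a multiple of $S_{\rho_n}(\zeta^J)^{-1}$ and the counit weighting by $S_{\rho_n}(\zeta^J)$, which after $g$ handles would produce the factor $S_{\rho_n}(\zeta^J)^{1-g}$ (and already at $g=0$ would contradict the known Vafa--Intriligator formula for $\lg$); in fact the quantum Euler class evaluates at $\zeta^J$ proportionally to $S_{\rho_n}(\zeta^J)$, equivalently the idempotent norms $\theta_J$ are proportional to $S_{\rho_n}(\zeta^J)^{-1}$, so that $\theta_J^{1-g}$ yields the exponent $g-1$ in the statement. Since you defer the exact constants (including the powers of $2$ and the $2^{-d}$ coming from the $q=1$ specialization) to \cite{CMP}, \cite{Che1}, \cite{Che2} --- exactly what the paper does --- this is a bookkeeping error rather than a gap in the method, but the direction of the $S_{\rho_n}$ exponent should be corrected.
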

\begin{proof}
For $g = 0$, the formula was given in \cite{Che1}. For an arbitrary $g$, we have the formula from \cite[p.\:1263]{CMP}:
\be \label{genus-g-VI-formula}
\left\langle \sigma_{\lambda^1} , \sigma_{\lambda^2}, \ldots , \sigma_{{\lambda}^m} \right\rangle_{g,d} \ = \ \mathrm{tr} \left( [ \mathcal{E}^{g-1}\sigma_{\lambda^1} \cdots \sigma_{\lambda^m}] \right) ,
\ee
where $\mathcal{E}$ is the quantum Euler class (cf.\ \cite{Ab}) of $\lg$ in $qH^*(\lg,\C)_{q=1}$, and $[\sigma]$ denotes the quantum multiplication operator on $qH^*(\lg,\C)_{q=1}$ determined by $\sigma$. Then the formula follows from \cite[Theorem 6.6]{Che2} where the eigenvalues of $[\sigma]$ were computed for an arbitrary $\sigma \in qH^*(\lg,\C)_{q=1}$. \end{proof}

\section{Lagrangian Quot Schemes}

Let $\Mor^d(C,\lg)$ be the space of morphisms of degree $d$ from $C$ to $\lg$. Informally, Gromov--Witten invariants of $\lg$ might be thought of as intersection numbers on $\Mor^d(C,\lg)$. However, it is necessary to compactify $\Mor^d(C,\lg)$ in order to develop an intersection theory. An alternative compactification to Kontsevich's moduli space of stable maps is the Lagrangian Quot scheme, which is practical for computing intersection numbers. In fact, Kresch and Tamvakis \cite{KT} used a Lagrangian Quot scheme for $W = \mathcal{O}_{\mathbb{P}^1}^{\oplus 2n}$ to compute the quantum cohomology ring of $\lg$. This may indicate that Lagrangian Quot schemes are important moduli spaces whose intersection theory is of interest. In this section, we describe the Lagrangian Quot schemes of a symplectic bundle over a curve of any genus.

\subsection{Definition and notation}

Let $C$ be a smooth projective curve of genus $g$, and $W$ a vector bundle on $C$. Let $\Quot_{k,e}(W)$ be Grothendieck's Quot scheme parametrizing subsheaves $[E \to W]$ of rank $k$ and degree $e$, or equivalently quotients $[W \to F]$ where $F$ is coherent of degree $\deg(W) - e$ and rank $r-k$. Let $\Quot_{k,e}^\circ(W)$ be the open sublocus
\[ \left\{ [ \psi \colon E \to W ] \in \Quot_{k,e}(W) \: | \: \psi \hbox{ is a vector bundle injection} \right\} . \]
Recall that $\Quot_{k,e}(W)$ is a projective variety, possibly having other components than the closure of $\Quot_{k,e}^\circ(W)$. If $\pi_C \colon \Quot_{k,e}(W) \times C \rightarrow C$ is the projection, then on $\Quot_{k,e}(W) \times C$ we have the universal exact sequence of sheaves
$$ 0 \ \rightarrow \ \mathcal{E} \ \rightarrow \  \pi_C^*W \ \rightarrow \ \mathcal{Q}\rightarrow \ 0 . $$

Suppose now that $\rank (W) = 2n$ and $W$ is equipped with a symplectic form $\omega \colon W \otimes W \rightarrow L$, where $L$ is a line bundle of degree $\ell$. As $\omega$ induces an isomorphism $W \cong W^\vee \otimes L$, in particular $\deg (W) = n\ell$.

The {\it Lagrangian Quot scheme} $\LQ$ is the subscheme of $\Quot_{n,e}(W)$ consisting of Lagrangian subsheaves. To see that $\LQ$ is a closed subscheme of $\Quot_{n,e}(W)$, consider the map
\[
\sigma \colon \Quot_{n,e}(W) \ \longrightarrow \ H^0(C, \wedge^2 ({\mathcal E}^\vee) \otimes L)
\]
sending $[ j \colon E \to W]$ to $\omega \circ \wedge^2 j \colon \wedge^2 E \to L$. This $\sigma$ defines a section of the sheaf $\pi_* (\wedge^2 (\mathcal{E}^\vee) \otimes \pi^*L)$, where $\pi \colon \Quot_{k,e}(W) \times C \rightarrow \Quot_{k,e}(W)$ is the projection. The subscheme $\LQ$ is nothing but the zero locus of $\sigma$. (For another argument, see \cite[Lemma 2.2]{CCH}.)

Hence $\LQ$ is a compactification of the quasiprojective scheme $\LQo$ of Lagrangian subbundles, possibly having components in addition to the closure of $\LQo$. For the trivial symplectic bundle $W = \Oc^{\oplus 2n}$ and $e \le 0$, the subscheme $\LQo$ coincides with the space $\Mor^{-e}(C, \lg)$ of morphisms of degree $-e$.

\subsection{Property \texorpdfstring{$\cP$ on $\LQ$}{P}} \label{P01}
In this subsection, we discuss further the property $\cP$ which was defined in \S\:1. To give a motivating example of an $\LQ$ having property $\cP$, we use the notion of very stability as studied in \cite{BR}. A symplectic bundle $W \cong W^\vee \otimes L$ is called \textit{very stable} if the bundle $K_C L^{-1} \otimes \Sym^2 W$ has no nonzero nilpotent sections. The following is proven similarly to \cite[Lemma 3.3]{LN}.

\begin{lemma} \label{very-stable} Let $W$ be a very stable symplectic bundle. Then we have $H^1 ( C, L \otimes \Sym^2 E^\vee ) = 0$ for every Lagrangian subsheaf $E \subset W$. \end{lemma}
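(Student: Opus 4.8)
The plan is to translate the vanishing $H^1(C, L \otimes \Sym^2 E^\vee) = 0$ into a statement about sections of a bundle on $W$ (or rather on $\Sym^2 W$), and then use Serre duality together with the very stability hypothesis to rule out a nonzero section. First I would apply Serre duality: $H^1(C, L \otimes \Sym^2 E^\vee)$ is dual to $H^0(C, K_C L^{-1} \otimes \Sym^2 E)$, so it suffices to show that the latter vanishes for every Lagrangian subsheaf $E \subset W$. A nonzero section $s$ of $K_C L^{-1} \otimes \Sym^2 E$ would give, via the inclusion $E \hookrightarrow W$ and the induced inclusion $\Sym^2 E \hookrightarrow \Sym^2 W$ (which is injective away from a finite set, hence injective on sections since $K_C L^{-1} \otimes \Sym^2 E$ is torsion-free), a nonzero section of $K_C L^{-1} \otimes \Sym^2 W$.

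The key point is then that this section, viewed as a bundle endomorphism, is nilpotent. Using the symplectic isomorphism $W \cong W^\vee \otimes L$, a section of $K_C L^{-1} \otimes \Sym^2 W$ corresponds to a Higgs-type field $\phi \colon W \to W \otimes K_C$ which is \emph{symmetric} with respect to $\omega$, i.e. $\omega(\phi(v) \otimes w) = \omega(v \otimes \phi(w))$ (the $\Sym^2$, as opposed to $\wedge^2$, being exactly this symmetry condition; one has to be careful with a factor coming from $\Sym^2 W \subset W \otimes W$ versus the skewsymmetry of $\omega$, so that symmetric tensors in $W$ go to $\omega$-symmetric endomorphisms). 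The section $s$ of $K_C L^{-1} \otimes \Sym^2 E$ means precisely that the image of $\phi$ lands in $E \otimes K_C$, while $E \subset W$ Lagrangian gives $E \subset E^\perp$; hence $\phi(W) \subset E \otimes K_C$ and $\phi(E \otimes K_C^{-1}) \subset E^\perp \otimes K_C / \ldots$, and chaining these, $\phi^2(W) \subset \phi(E \otimes K_C) \subset \omega\text{-orthogonality forces } \phi(E) \subset E^{\perp\perp}\cap\ldots$. More precisely: since $\phi$ factors through $E \otimes K_C$ and $\phi$ restricted to $E$ also factors through the image, symmetry of $\phi$ plus $E$ isotropic forces $\phi|_E = 0$ (as $\omega(\phi(E)\otimes E) = \omega(E \otimes \phi(E))$ lies in $\omega(E\otimes E)\otimes K_C = 0$, and $\phi(E) \subset E$ together with $E$ isotropic pins down $\phi|_E$ inside the radical), so $\phi^2 = 0$ generically, hence $\phi$ is nilpotent as a section of $K_C L^{-1}\otimes \Sym^2 W$. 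This contradicts very stability.

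The main obstacle I expect is the linear-algebra bookkeeping identifying $\Sym^2 W$ (under $W \cong W^\vee \otimes L$) with the space of \emph{$\omega$-symmetric} $K_C L^{-1}$-twisted endomorphisms, and then checking carefully that ``$s$ is a section of the subsheaf $K_C L^{-1}\otimes \Sym^2 E$'' together with ``$E$ Lagrangian'' forces $\phi \circ \phi = 0$ as a map $W \to W \otimes K_C^{\otimes 2}$ — in particular that the chain of inclusions $\phi(W) \subseteq E\otimes K_C$ and $E \subseteq E^\perp$ does give $\phi^2 = 0$ and not merely $\phi^2(W) \subseteq (\text{something small})$. One must also handle the subtlety that $E$ is only a subsheaf, not a subbundle; but since everything is checked at the generic point and the sheaves in question are torsion-free, generic nilpotence of $\phi$ suffices to conclude nilpotence everywhere, and the reduction to the Lemma~\ref{very-stable} hypothesis goes through. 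This is exactly the symplectic analogue of the argument in \cite[Lemma 3.3]{LN}, so I would model the writeup closely on that proof.
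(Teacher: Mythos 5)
Your proposal takes essentially the same route as the paper: the paper gives no independent argument, stating only that the lemma is proved like \cite[Lemma 3.3]{LN}, which is exactly the Serre-duality-plus-nilpotent-twisted-endomorphism argument you outline (dualize to $H^0(C, K_C L^{-1}\otimes \Sym^2 E)$, view the section as an $\omega$-symmetric Higgs field $\phi$ with image generically in $\bE\otimes K_C$, and use the Lagrangian condition to force $\phi^2=0$, contradicting very stability). The only repair needed in your middle step: to get $\phi|_E=0$ you should pair $\phi(e)$ against an \emph{arbitrary} $w\in W$, since $\omega(\phi(e), w)=\omega(e,\phi(w))$ lies in $\omega(E\otimes \bE)\otimes K_C=0$ because $\phi(W)\subseteq \bE\otimes K_C$ and $\bE$ is isotropic, whence $\phi(e)=0$ by nondegeneracy of $\omega$; pairing only against $E$, as written, yields merely $\phi(E)\subseteq E^\perp$ and does not by itself pin down $\phi|_E$.
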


By \cite[Proposition 2.4]{CCH}, the Zariski tangent space of $\LQ$ at a point $[ E \to W ] \in \LQo$ is $H^0 ( C, L \otimes \Sym^2 E^\vee)$. Hence the expected dimension of $\LQ$ is
\begin{equation} \label{Dnel}
 \chi (C, L \otimes \Sym^2 E^\vee) \ = \ -(n+1)e - \frac{n(n+1)}{2}(g-1-\ell) \ = \ D ( n, e, \ell ) . 
 \end{equation}

\begin{prop} \label{Prop-zero}
Let $L$ be a line bundle of degree $\ell$ and $W$ an $L$-valued symplectic bundle of rank $2n$ which is general in moduli. Set $e_0 := - \left\lceil \frac{1}{2}{n(g-1-\ell)} \right\rceil$.
\begin{enumerate}
\item A maximal Lagrangian subbundle of $W$ has degree $e_0$, and $LQ_{e_0}(W) = LQ_{e_0}^\circ(W)$.
\item If $n(g-1-\ell)$ is even, then $LQ_{e_0}(W)$ is a smooth scheme of dimension zero.
\end{enumerate}
\end{prop}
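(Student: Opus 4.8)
The plan is to reduce both parts to three ingredients that are already available: the very stability of a general symplectic bundle, the tangent space description of \cite[Proposition 2.4]{CCH}, and the nonemptiness of $LQ_{e_0}(W)$ for general $W$ — the last being exactly where the input from \cite{CH2} enters. The only inputs of a computational nature are the Riemann--Roch identity $\chi(C, L\otimes\Sym^2 E^\vee) = D(n,\deg E,\ell)$ for a Lagrangian subsheaf $E\subset W$, which is \eqref{Dnel}, together with the elementary verification that $D(n,e_0,\ell)=0$ when $n(g-1-\ell)$ is even.

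First I would establish the degree bound. Since $W$ is general it is very stable (cf.\ \cite{BR}, \cite{CH2}), so Lemma \ref{very-stable} gives $H^1(C, L\otimes\Sym^2 E^\vee)=0$ for every Lagrangian subsheaf $E\subset W$. Combined with \eqref{Dnel} this yields $h^0(C,L\otimes\Sym^2 E^\vee) = D(n,\deg E,\ell)\ge 0$, hence $\deg E \le -\tfrac{n}{2}(g-1-\ell)$; since $\deg E\in\Z$ this says precisely $\deg E\le e_0$. Thus every Lagrangian subsheaf — in particular every Lagrangian subbundle — of $W$ has degree at most $e_0$. On the other hand, by the results of \cite{CH2} the scheme $LQ_{e_0}(W)$ is nonempty for $W$ general, and any Lagrangian subsheaf of degree $e_0$ is automatically saturated (its saturation is again Lagrangian, of degree $\ge e_0$, hence of degree exactly $e_0$); so the maximal degree of a Lagrangian subbundle is exactly $e_0$. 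To obtain the remaining assertion of (1), suppose $E\subset W$ is a Lagrangian subsheaf of degree $e_0$ that is not a subbundle. Its saturation $\overline{E}$ has rank $n$, and $\omega(\overline{E}\otimes\overline{E})$ is a torsion subsheaf of the line bundle $L$, hence zero, so $\overline{E}$ is again Lagrangian; but $\deg\overline{E}>e_0$, contradicting the bound. Hence $LQ_{e_0}(W)=LQ_{e_0}^\circ(W)$, proving (1).

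For (2), I would assume $n(g-1-\ell)$ is even, so that $e_0=-\tfrac{n}{2}(g-1-\ell)$ and $D(n,e_0,\ell)=0$ by a direct substitution into $D(n,e,\ell)$. Since $LQ_{e_0}(W)=LQ_{e_0}^\circ(W)$, every point is of the form $[E\to W]$ with $E$ a Lagrangian subbundle, so by \cite[Proposition 2.4]{CCH} the Zariski tangent space there is $H^0(C,L\otimes\Sym^2 E^\vee)$, which has dimension $D(n,e_0,\ell)=0$ by the vanishing $H^1=0$ established above. Therefore $LQ_{e_0}(W)$ is smooth of dimension zero.

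\textbf{The main obstacle} is the nonemptiness of $LQ_{e_0}(W)$ for a general $W$. The very-stability argument bounds the degree of a Lagrangian subsheaf only from above; one must separately produce a Lagrangian subbundle attaining the bound. For $e\ll 0$ this is covered by the irreducibility and expected-dimension results of \cite{CCH}, but pushing nonemptiness up to the boundary value $e=e_0$ — where the expected dimension has dropped to $0$ (or to $\tfrac{n+1}{2}$ in the case $n(g-1-\ell)$ odd) — is the substantive input drawn from \cite{CH2}. The natural route there is a dimension estimate for the relative Lagrangian Quot scheme over the moduli space of $L$-valued symplectic bundles, or a degeneration of $W$ to a suitably decomposable symplectic bundle for which maximal Lagrangian subbundles can be exhibited explicitly.
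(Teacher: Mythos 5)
Your argument is correct and follows the paper's proof in all essentials: existence of a degree-$e_0$ Lagrangian subbundle is taken from \cite{CH2}, the equality $LQ_{e_0}(W)=LQ_{e_0}^\circ(W)$ comes from the same saturation argument (the saturation of a nonsaturated degree-$e_0$ Lagrangian subsheaf would be a Lagrangian subbundle of larger degree), and part (2) uses Lemma \ref{very-stable}, the tangent-space description of \cite[Proposition 2.4]{CCH}, density of very stable bundles from \cite{BR}, and the computation $D(n,e_0,\ell)=0$. The one place you deviate is the upper bound $\deg E\le e_0$: the paper simply cites \cite[Theorem 1.4 and Remark 3.6]{CH2} for the whole first assertion of (1) (both the bound and its attainment), whereas you rederive the bound internally from very stability, via $h^0(C,L\otimes\Sym^2 E^\vee)=\chi=D(n,\deg E,\ell)\ge 0$ and integrality of the degree. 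That self-contained derivation is valid and slightly reduces the reliance on \cite{CH2}, but, as you correctly identify, the substantive external input — nonemptiness of $LQ_{e_0}(W)$ for general $W$ — still comes from \cite{CH2}, exactly as in the paper, so there is no gap; the two proofs differ only in this minor redistribution of what is cited versus what is proved on the spot.
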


\begin{proof} The first statement in (1) follows from \cite[Theorem 1.4 and Remark 3.6]{CH2}. For the rest: As the Lagrangian subsheaves parametrized by $LQ_{e_0} (W)$ have maximal degree in $W$, every point of $\LQ$ corresponds to a Lagrangian subbundle, for otherwise, the subbundle generated by a subsheaf of degree $e_0$ would be a Lagrangian subbundle of higher degree. Hence in this case $\LQ = LQ_e^\circ(W)$.

For (2): By \cite[Proposition 2.4]{CCH}, if $H^1 (C, L \otimes \Sym^2 E^\vee) = 0$ then $\LQ$ is smooth of dimension $\chi (C, L \otimes \Sym^2 E^\vee) = 0$ at $[E \to W]$. By Lemma \ref{very-stable}, this holds for all $[E \to W]$ if $W$ is very stable; and by \cite{BR}, very stable bundles are dense in moduli. Statement (2) follows. \end{proof}

In particular, if $W$ is generic and $n(g-1-\ell) \equiv 0 \mod 2$, then $LQ_{e_0} (W)$ has property $\cP$. More generally, regarding property $\cP$, we cite the main result of \cite{CCH}.

\begin{prop} \label{prop:irreducible}
Let $W$ be a symplectic bundle of degree $n\ell$ over $C$. Then there exists an integer $e(W)$ such that if $e \leq e(W)$, then $\LQ$ is an irreducible and generically smooth variety of dimension $D ( n, e, \ell )$, %= -(n+1)e-\frac{1}{2}n(n+1)(g-1-\ell)$, 
 of which a general point corresponds to a Lagrangian subbundle. In particular, if $e \le e(W)$, then $\LQ$ has property $\cP$. \end{prop}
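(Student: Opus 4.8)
The plan is to deduce Proposition \ref{prop:irreducible} from the main theorem of \cite{CCH}, which asserts precisely the irreducibility, generic smoothness, and expected dimensionality of $\LQ$ for $e$ sufficiently negative, together with the statement that a general point corresponds to a vector bundle injection. So the work here is not to reprove that theorem but to package it: first recall the statement from \cite{CCH}, extract the bound $e(W)$ from it (or simply name the threshold guaranteed by that reference), and then observe that the three conditions ``generically smooth,'' ``dimension $D(n,e,\ell)$,'' and ``general point is a subbundle'' are, component by component, exactly the conditions in the definition of property $\cP$ — here trivially so because irreducibility means there is only one component.

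The key steps, in order, would be: (i) State that by \cite{CCH} there is an integer $e(W)$ such that $\LQ$ is nonempty, irreducible, generically smooth, and of dimension equal to $\chi(C, L \otimes \Sym^2 E^\vee) = D(n,e,\ell)$ for $e \le e(W)$; here the identification of $\chi$ with $D(n,e,\ell)$ is \eqref{Dnel}. (ii) Note that, again by \cite{CCH}, one may take $e(W)$ small enough that the generic point of $\LQ$ lies in $\LQo$, i.e.\ corresponds to an honest Lagrangian subbundle rather than a properly non-saturated subsheaf; the point is that as $e \to -\infty$ the expected codimension of the non-saturated locus grows without bound (it is controlled by the length of the torsion of the quotient, each unit of which drops $e$ by one while the locus of sheaves with that much torsion has bounded-below codimension), so for $e$ small this locus cannot be all of the irreducible variety $\LQ$. (iii) Conclude: since $\LQ$ is irreducible, ``every component is generically smooth of dimension $D(n,e,\ell)$ with general point a subbundle'' reduces to the single assertions just recalled, hence $\LQ$ has property $\cP$.

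The only genuine subtlety — and the step I would be most careful about — is making sure the cited results from \cite{CCH} really do give \emph{both} the irreducibility/dimension statement \emph{and} the ``general point is a subbundle'' statement for a common range of $e$, rather than two separate thresholds; if \cite{CCH} phrases these with different bounds one simply takes $e(W)$ to be the minimum of the two. The parenthetical remark in the introduction of the present paper (that \cite{CCH}, like \cite{PoRo} in the classical case, proves irreducibility) signals that the reference is strong enough that no further argument is needed. Thus the proof is essentially a one-line citation, and the ``main obstacle'' is purely bookkeeping: confirming that the constant $e(W)$ in the statement is literally the constant furnished by \cite{CCH}, so that Proposition \ref{prop:irreducible} is a faithful restatement rather than a strengthening.

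\begin{proof}
This is the main result of \cite{CCH}. Indeed, by loc.\ cit.\ there is an integer $e(W)$ such that for all $e \le e(W)$, the Lagrangian Quot scheme $\LQ$ is nonempty, irreducible and generically smooth of dimension $\chi(C, L \otimes \Sym^2 E^\vee)$, which by \eqref{Dnel} equals $D(n,e,\ell)$, and moreover its general point corresponds to a Lagrangian subbundle of $W$ (that is, lies in $\LQo$); here we shrink $e(W)$ if necessary so that both conclusions hold simultaneously. Since an irreducible variety has a single component, these conditions are precisely the requirements in the definition of property $\cP$. Hence $\LQ$ has property $\cP$ whenever $e \le e(W)$.
\end{proof}
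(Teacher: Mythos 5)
Your proposal is correct and matches the paper's treatment: the paper gives no independent argument for this proposition either, but simply cites it as the main result of \cite{CCH}, exactly as you do, with property $\cP$ following immediately because irreducibility leaves only one component to check. The only difference is your extra heuristic about the nonsaturated locus, which is unnecessary since the ``general point is a subbundle'' statement is already part of the cited theorem.
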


\subsection{Nonsaturated loci of Lagrangian Quot schemes} \label{NonSat}

Let $W$ be a symplectic bundle and $E \subset W$ a Lagrangian subsheaf. We denote by $\bE$ the saturation of $E$ in $W$. This is the sheaf of sections of the subbundle generated by $E$, or equivalently, the inverse image in $W$ of the torsion subsheaf of $W/E$.
 For fixed $e$ and for $r \ge 0$, we write
\[ \cB_r \ := \ \{ [E \to W] \in \LQ \: | \: \bE / E \hbox{ is a torsion sheaf of length } r \} . \]
This is a locally closed subscheme of $\LQ$. The following is clear from the definitions (compare with \cite[Theorem 1.4]{Be}).

\begin{lemma} \label{BrFibration} The association $E \mapsto \bE$ defines a surjective morphism
\[ f_r \colon \cB_r \ \to \ LQ^\circ_{e+r} (W) . \] If $F \subset W$ is a Lagrangian subbundle of degree $e+r$, then $f_r^{-1} ( F )$ is canonically identified with $\Quot^{0, r} (F)$. In particular, $\cB_r \to LQ_{e+r}^\circ ( W )$ is topologically a fiber bundle with irreducible fibers of dimension $nr$. \end{lemma}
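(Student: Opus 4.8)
The plan is to realise $\cB_r$ as (an open part of) a relative Quot scheme over $LQ^\circ_{e+r}(W)$ and to read all three assertions off that description; this is the Lagrangian analogue of \cite[Theorem 1.4]{Be}. \textit{Construction of $f_r$ and surjectivity.} Restrict the universal sequence $0 \to \mathcal{E} \to \pi_C^*W \to \mathcal{Q} \to 0$ to $\cB_r \times C$. Over $\cB_r$ the fibrewise torsion subsheaf of $\mathcal{Q}$ has constant length $r$, and the usual saturation-in-families construction (as in \cite[Theorem 1.4]{Be} and \cite[\S 6]{PoRo}) yields a subsheaf $\overline{\mathcal{E}} \subset \pi_C^*W$, flat over $\cB_r$ with $\pi_C^*W/\overline{\mathcal{E}}$ locally free, whose fibre over $[E \to W]$ is the saturation $\bE \subset W$. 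Each $\bE$ is Lagrangian: it has rank $n$, and $\omega|_{\bE}$ is a section of the locally free sheaf $\wedge^2\bE^\vee \otimes L$ vanishing on the dense open locus where $E = \bE$, hence zero. So the universal property of $\Quot_{n,e+r}(W)$ gives a morphism $f_r \colon \cB_r \to \Quot_{n,e+r}(W)$ factoring through $LQ^\circ_{e+r}(W)$. Surjectivity is clear: for a Lagrangian subbundle $F$ of degree $e+r$, any length-$r$ torsion quotient $F \twoheadrightarrow T$ has kernel $E \subset W$ isotropic of rank $n$ --- hence Lagrangian --- of degree $e$ and with saturation $F$, so $[E \to W] \in \cB_r$ and $f_r([E \to W]) = F$.

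\textit{Identification of the fibres.} Set $B := LQ^\circ_{e+r}(W)$ and let $\mathcal{F} \subset \pi_C^*W$ be the universal Lagrangian subbundle over $B$. Form the relative Quot scheme $\cG := \Quot^{0,r}(\mathcal{F}/B)$, with universal sequence $0 \to \mathcal{E}' \to q^*\mathcal{F} \to \mathcal{T}' \to 0$ on $\cG \times C$ ($q$ the projection). Composing $\mathcal{E}' \hookrightarrow q^*\mathcal{F} \hookrightarrow q^*\pi_C^*W$ gives a $\cG$-family of Lagrangian subsheaves of $W$ of degree $e$ whose cokernel in $q^*\mathcal{F}$ is $\mathcal{T}'$, flat of relative length $r$; this family lies in $\cB_r$, so the universal property of $\LQ$ produces a $B$-morphism $\cG \to \cB_r$. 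Conversely the inclusion $\mathcal{E} \hookrightarrow \overline{\mathcal{E}} = f_r^*\mathcal{F}$ above has cokernel flat of relative length $r$, so the universal property of $\cG$ gives a $B$-morphism $\cB_r \to \cG$; the two are mutually inverse, whence $f_r^{-1}(F)$ is canonically $\Quot^{0,r}(F)$.

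\textit{The fibre-bundle statement.} It remains to see that $\cG \to B$ is proper and smooth. Properness is the general fact that the relative Quot scheme of a coherent sheaf on a projective family is projective over the base. For smoothness, by the infinitesimal criterion it is enough that deformations of a point $[F \twoheadrightarrow T]$ of a fibre be unobstructed; the obstruction lies in $\mathrm{Ext}^1_C(E,T)$ with $E = \ker$, and since $E$ is locally free on the smooth curve $C$ and $T$ is torsion one has $\mathcal{E}xt^i_C(E,T) = 0$ for $i > 0$ and $\mathcal{H}om_C(E,T) \cong E^\vee \otimes T$ torsion, so $\mathrm{Ext}^1_C(E,T) = H^1(C,E^\vee \otimes T) = 0$. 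Thus $\cG \to B$ is smooth of relative dimension $\mathrm{hom}_C(E,T) = \operatorname{length}(E^\vee \otimes T) = nr$, and by Ehresmann's theorem a proper smooth morphism is a locally trivial $C^\infty$-fibre bundle; its fibre $\Quot^{0,r}(F)$ is smooth of dimension $nr$. Finally $\Quot^{0,r}(F)$ is irreducible: the support morphism to $\Sym^r C$ restricts to a $(\mathbb{P}^{n-1})^r$-bundle over the dense open locus of reduced divisors (every torsion quotient degenerates to a reduced one), and this irreducible open subset meets every connected component of the smooth scheme $\Quot^{0,r}(F)$; alternatively one invokes \cite[Theorem 1.4]{Be}.

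The step I expect to be the main obstacle is making the saturation-in-families construction rigorous --- exhibiting $\overline{\mathcal{E}}$ as an honest flat subsheaf of $\pi_C^*W$ with the prescribed fibres and locally free quotient, so that $f_r$ is a morphism of schemes and not merely a map on points --- together with the verification that the two $B$-morphisms between $\cB_r$ and $\cG$ are mutually inverse. Once the identification $\cB_r \cong \Quot^{0,r}(\mathcal{F}/B)$ is established, the smoothness computation and the Ehresmann argument are routine.
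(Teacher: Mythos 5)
Your proof is, in substance, the argument the paper has in mind: the paper offers no proof of this lemma at all beyond ``clear from the definitions (compare with \cite[Theorem 1.4]{Be})'', and your realization of $\cB_r$ as the relative Quot scheme $\Quot^{0,r}$ of the universal Lagrangian subbundle over $LQ^\circ_{e+r}(W)$, the mutually inverse universal-property morphisms, the unobstructedness computation $\mathrm{Ext}^1_C(E,T)=0$ and the count $\dim\mathrm{Hom}_C(E,T)=nr$, and the irreducibility of $\Quot^{0,r}(F)$ via the support map to $\Sym^r C$, are exactly the Bertram-style elaboration that citation points to. The worry you flag about saturation in families is the right one, but it is harmless here: on the stratum where the fibrewise torsion of $\mathcal{Q}$ has constant length $r$ the quotient of $\mathcal{Q}$ by its relative torsion is flat and commutes with base change (the usual flattening-stratification argument), and since the lemma is only invoked for topological/dimension statements, working with reduced structures costs nothing.

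The one step that does not hold as written is the appeal to Ehresmann's theorem. Ehresmann requires the base (hence the total space) to be a smooth manifold, but the lemma is stated for an arbitrary symplectic bundle $W$, and $LQ^\circ_{e+r}(W)$ need not be smooth (nor reduced); smoothness of the morphism $\cG\to B$ does not make $B$ smooth. So ``proper $+$ smooth $\Rightarrow$ locally trivial $C^\infty$-bundle'' is not available off the shelf in this generality. This does not affect anything the lemma is used for: in the proofs of Proposition \ref{prop:Chern} and Proposition \ref{Prop-Open part} only surjectivity and the fact that every fiber is irreducible of dimension $nr$ enter, and those you have established. If you want the literal ``topologically a fiber bundle'' statement, either prove local topological triviality over a possibly singular (reduced) base by a stratification/isotopy-lemma argument, or simply record the weaker statement that $f_r$ is a surjective morphism all of whose fibers are canonically $\Quot^{0,r}(F)$, irreducible of dimension $nr$, which is all the paper needs.
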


\noindent Notice that $f_0 \colon \cB_0 \to \LQo$ is the identity map.

\section{Degeneracy loci for Lagrangian Quot schemes}

\subsection{Lagrangian degeneracy loci and Chern classes}

In Proposition \ref{Prop:Lagrangian-Loci}, we recalled that Lagrangian degeneracy loci $\left[ Z_\Lambda ( H_\bullet ) \right]$ can be expressed in terms of Chern classes when $\psi \colon E \hookrightarrow V$ is a vector bundle injection. Here we will see that for special Schubert classes, this condition on $\psi$ can be relaxed.

Let $W$ be an $L$-valued symplectic bundle, and set $\bbX := \LQ$. Write $\pi_C \colon \bbX \times C \to C$ for the projection. There is an exact sequence of sheaves over $\bbX \times C$ given by
\begin{equation} 0 \ \rightarrow \ \mathcal{E} \ \rightarrow \ \pi_C^* W \ \rightarrow \ \widetilde{\mathcal{E}} \end{equation} \label{univ}
where $\cE$ is the universal subsheaf and $\widetilde{\cE} = \cE^\vee \otimes \pi_C^*L$. For $p \in C$, denote by $\mathcal{E}(p)$, $\mathcal{W}(p)$ and $\widetilde{\mathcal{E}}(p)$ the restrictions to $\bbX \times\{ p \}$ of $\mathcal{E}$, $\pi_C^* W$ and $\widetilde{\mathcal{E}}$ respectively.

Identifying the fibers $\widetilde{\cE}(p) \cong \cE(p)^\vee$, we obtain
\be \label{exact-sequence-p} 0 \ \rightarrow \ \cE(p) \ \rightarrow \ \cW(p) \ \rightarrow \cE(p)^\vee . \ee
Note that $\cW(p) = \bbX \times W_p$ is a trivial symplectic bundle. Let
$$H_{\bullet} : \ H_1 \ \subset \ H_{2} \ \subset \ \cdots \ \subset \ H_{n-1} \ \subset \ H_n $$
be a complete flag of isotropic subspaces in $W_p$, and
\[ H_n \ = \ H_n^\perp \ \subset \ H_{n+1}^\perp \ \subset \ \cdots \ \subset \ H_2^\perp \ \subset \ H_1^\perp \]
the corresponding coisotropic flag of orthogonal complements. This induces a flag of trivial subbundles
\[ \mathcal{H}_{n-k+1}^\perp \ := \ \bbX \times H_{n-k+1}^\perp : \ 1 \le k \le n \]
of $\bbX \times W_p$. Following \cite{KT}, we will define Lagrangian degeneracy loci on $\bbX$. Each Lagrangian subsheaf map $\psi \colon E \to W$ induces a map $E_p \to W_p / H_{n+1-k}^\perp$ for each $k$. We adapt Definition \ref{degeneracy-first} to this case.

\begin{definition}\label{def: Lagrangian loci} For $p \in C$ and $\lambda \in \Dn$, define $\bbX_\lambda(H_{\bullet};p)$ as
\begin{multline*} \left\{ [ \psi \colon E \to W ] \in \bbX \hspace{0.05in} | \hspace{0.05in} \rank \left( \cE(p) \rightarrow \cW(p)/ \mathcal{H}^{\perp}_{n+1-\lambda_i} \right)_\psi \ \leq \ n+1-i-\lambda_i , \right. \\ \left. 1\le i \le l(\lambda) \right\} .  \end{multline*} 
\end{definition}

\begin{remark} \label{independent_p} If $\lambda = (k)$, then $\bbX_{(k)}(H_{\bullet} ; p)$ is determined by the single isotropic subspace $H_{n+1-k}$ of $W_p$. Also, for $\rho_n = (n, n-1, \dots, 1)$, we have
$$ \bbX_{\rho_n} ( H_{\bullet} ; p ) \ = \ \left\{ [ \psi \colon E \to W ] \in \bbX \: | \: \psi \left( \cE(p) \right) \subseteq H_n \right\} , $$
which depends only on $H_n$. Henceforth we shall denote $\bbX_{(k)}( H_{\bullet} ; p )$ and $\bbX_{\rho_n}(H_{\bullet};p)$ by $\bbX_k ( H_{n+1-k} ; p) $ and $\bbX_{\rho_n}(H_n ; p)$ respectively. \end{remark}

\begin{lemma}\label{Lemma:Chern Class} Assume that $\bbX = \LQ$ has property $\cP$. Fix $\lambda\in \Dn$. Suppose that $\bbX_{\lambda}( H_\bullet ; p)$ satisfies the following.
  \begin{enumerate} \item Every irreducible component of $\bbX_{\lambda}( H_\bullet ; p )$ has codimension $|\lambda|$.
  \item \label{inquality} $\dim \hspace{0.03in}\bbX_{\lambda}( H_\bullet ; p ) \cap (\bbX\setminus \bbX^\circ) < \dim \hspace{0.03in}\bbX_{\lambda}(H_\bullet ; p)$.
  \end{enumerate}
Then we have $\left[ \bbX_{\lambda}(H_\bullet ; p) \right] = \Qt_{\lambda}(\mathcal{E}^\vee(p))\cap [\bbX]$. \end{lemma}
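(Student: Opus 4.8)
The plan is to reduce to the situation of Proposition~\ref{Prop:Lagrangian-Loci} on the open dense subbundle locus $\bbX^\circ = LQ_e^\circ(W)$, where the universal map $\psi$ is a vector bundle injection, and then to propagate the resulting identity to all of $\bbX$, using hypotheses (1)--(2) to kill the contribution of the non‑saturated locus $\bbX\setminus\bbX^\circ$. The latter is the crux, because over $\bbX\setminus\bbX^\circ$ the map $\psi$ is not a subbundle injection and, by the example in \cite[\S4.5]{KT2002}, the formula $\Qt_\lambda$ genuinely fails there in general.

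First I would establish the identity over $\bbX^\circ$. There the universal subsheaf $\cE$ restricts to a subbundle of $\pi_C^*W$ with locally free quotient, so $\cE(p)|_{\bbX^\circ}$ is locally free of rank $n$ and the induced map $\cE(p)|_{\bbX^\circ}\to\cW(p)|_{\bbX^\circ}$ is a vector bundle injection defining a relative Lagrangian subbundle of the trivial symplectic bundle $\bbX^\circ\times W_p$, with the $\cH^\perp_m|_{\bbX^\circ}$ trivial subbundles. By property $\cP$ the scheme $\bbX^\circ$ is dense in $\bbX$ and generically smooth, and by (1)--(2) the locus $\bbX_\lambda(H_\bullet;p)\cap\bbX^\circ$ is dense in $\bbX_\lambda(H_\bullet;p)$ and of pure codimension $|\lambda|$ in $\bbX^\circ$. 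Hence Proposition~\ref{Prop:Lagrangian-Loci} applies on $\bbX^\circ$ (the Thom--Porteous/Pfaffian argument behind it needs only that the base be pure‑dimensional and the degeneracy locus be of the expected codimension), giving $\left[\bbX_\lambda(H_\bullet;p)\cap\bbX^\circ\right]=\Qt_\lambda(\cE^\vee(p))\cap[\bbX^\circ]$.

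For the passage from $\bbX^\circ$ to $\bbX$ I would use that, the isotropic flag in $W_p$ being fixed, the conditions of Definition~\ref{def: Lagrangian loci} are ordinary rank conditions on the composites of $\cE(p)\to\cW(p)$ with the flag of trivial quotients $\cW(p)\twoheadrightarrow\cW(p)/\cH^\perp_m$. Thus $\bbX_\lambda(H_\bullet;p)$ is a degeneracy locus of type $A$, and by \cite{FuPr} its class is $\mathcal P_\lambda\cap[\bbX]$ for a universal polynomial $\mathcal P_\lambda$ in $c(\cE(p))$ --- with no hypothesis on $\psi$ --- as soon as it has codimension $|\lambda|$, which is hypothesis (1). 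Restricting this identity to $\bbX^\circ$ and comparing with the previous paragraph shows $\mathcal P_\lambda$ and $\Qt_\lambda(\cE^\vee(p))$ represent the same class on $\bbX^\circ$; and since Proposition~\ref{Prop:Lagrangian-Loci} identifies the universal class of this degeneracy locus as $\Qt_\lambda(E^\vee)$ in the case of an honest Lagrangian subbundle --- a situation in which the $c_i(E)$ are algebraically independent --- one gets $\mathcal P_\lambda=\Qt_\lambda(\cE^\vee(p))$ identically, and the claimed formula on $\bbX$ follows. Here hypothesis (2) is exactly what makes the argument legitimate: it forces $\bbX_\lambda(H_\bullet;p)$ to be the closure of its intersection with $\bbX^\circ$, so that no component is lost in the comparison.

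I expect the genuinely delicate point to be this transition off $\bbX^\circ$. A bare localization‑sequence argument does not close, since $\bbX\setminus\bbX^\circ$ can have codimension one in $\bbX$ (Lemma~\ref{BrFibration}), so one really must combine the type $A$ universal formula with hypothesis (2) to rule out a correction term supported on the non‑saturated locus. An alternative route, closer to the techniques of \S4, is to embed $\bbX$ via a symplectic Hecke transform of $W$ at points $\neq p$ into a Lagrangian Quot scheme all of whose points are subbundles --- as occurs in Proposition~\ref{Prop-zero}(1) --- apply Proposition~\ref{Prop:Lagrangian-Loci} there, where $\psi$ is a global subbundle injection, and transport the identity back using the compatibility of these degeneracy loci with Hecke transforms, hypothesis (2) again guaranteeing that no spurious components arise. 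Finally, for a special Schubert class $\lambda=(k)$ the whole difficulty collapses: $\bbX_{(k)}(H_{n+1-k};p)$ is then cut out by a single rank condition on $\cE(p)\to\cW(p)/\cH^\perp_{n+1-k}$ with trivial target, so ordinary Thom--Porteous applies verbatim on $\bbX$ under hypothesis (1) alone and yields $\left[\bbX_{(k)}(H_{n+1-k};p)\right]=c_k(\cE^\vee(p))\cap[\bbX]=\Qt_{(k)}(\cE^\vee(p))\cap[\bbX]$.
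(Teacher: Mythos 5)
Your opening step --- establishing $[\bbX_\lambda^\circ] = \Qt_{\lambda}(\cE^\vee(p)) \cap [\bbX^\circ]$ on the saturated locus via Proposition \ref{Prop:Lagrangian-Loci} --- agrees with the paper, which gets the same identity by viewing $\bbX_\lambda^\circ$ as the preimage of the Schubert variety under $\ev_p \colon \bbX^\circ \to \LG(W_p)$ and using $\cE^\vee(p) = \ev_p^*(\mathbf{E}^\vee)$. The gap is in your mechanism for crossing $\bbX \setminus \bbX^\circ$. You treat $\bbX_{\lambda}(H_\bullet;p)$ as a type A degeneracy locus and invoke a universal Chern-class formula from \cite{FuPr} ``with no hypothesis on $\psi$'' as soon as the codimension is $|\lambda|$. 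That is not what the type A theory provides: those formulas require the locus to have the \emph{type A} expected codimension, and for the $i$-th flagged rank condition $\rank\bigl(\cE(p) \to \cW(p)/\cH^{\perp}_{n+1-\lambda_i}\bigr) \le n+1-i-\lambda_i$ (a map from a rank-$n$ bundle to a rank-$(n+1-\lambda_i)$ bundle) the type A expected codimension is $i(\lambda_i+i-1)$, strictly larger than $\lambda_i$ for every $i \ge 2$. It is precisely the isotropy of $\psi$ that cuts the true codimension down to $|\lambda|$, and in that range only the type C (Pfaffian) formula is available --- which, as the paper emphasizes right after Proposition \ref{Prop:Lagrangian-Loci} by citing the counterexample in \cite[\S\:4.5]{KT2002}, genuinely needs $\psi$ to be a subbundle injection. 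So there is no universal polynomial $\mathcal{P}_\lambda$ in $c(\cE(p))$ computing $[\bbX_\lambda]$ on all of $\bbX$ under hypothesis (1) alone; if there were, hypothesis (2) and the detour through $\bbX^\circ$ would be superfluous. The case needed later is $\lambda = \rho_n$, with $n$ parts, so this failure is not marginal; and your ``alternative route'' via Hecke transforms is circular, since the required compatibility is Corollary \ref{cor-class}, whose proof rests on this very lemma.

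What the paper does instead at this point is purely a support-plus-localization argument: writing $m = D(n,e,\ell) - |\lambda|$, it first argues from (1), (2) and the identity on $\bbX^\circ$ that $\Qt_{\lambda}(\cE^\vee(p)) \cap [\bbX]$ lies in the image of $\Chow_m(\bbX_{\lambda}) \to \Chow_m(\bbX)$, and then runs the commuting localization exact sequences for the pairs $(\bbX_{\lambda}, \bbX_{\lambda}^\circ)$ and $(\bbX, \bbX^\circ)$. Hypothesis (2) forces $\Chow_m(\bbX_{\lambda} \setminus \bbX_{\lambda}^\circ) = 0$, so restriction $\Chow_m(\bbX_{\lambda}) \to \Chow_m(\bbX_{\lambda}^\circ)$ is an isomorphism, and a diagram chase identifies the image of $[\bbX_{\lambda}]$ in $\Chow_m(\bbX)$ with $\Qt_{\lambda}(\cE^\vee(p)) \cap [\bbX]$; the possible correction supported on the nonsaturated locus, which you rightly flagged, is thus controlled inside the Chow groups of $\bbX_{\lambda}$ itself rather than by any formula valid on all of $\bbX$. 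Finally, even your side remark for $\lambda = (k)$ is not quite ``verbatim'': Fulton's Thom--Porteous theorem identifies the localized class with the fundamental class only under a Cohen--Macaulay hypothesis on the ambient scheme, and property $\cP$ gives generic smoothness of $\LQ$, not Cohen--Macaulayness --- which is why the paper proves Proposition \ref{prop:Chern} by verifying the hypotheses of this lemma for general translates rather than by quoting Thom--Porteous directly.
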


\begin{proof}
To ease notation, write $\bbX_{\lambda} := \bbX_{\lambda}( p ; H_\bullet )$ and $\bbX_{\lambda}^\circ := \bbX_{\lambda}^\circ( p ; H_\bullet )$ for the duration of this proof. Set also $m := D ( n, e, \ell ) - |\lambda|$, where $D(n, e, \ell)$ is as given in (\ref{Dnel}). Note that property $\cP$ and conditions (1) and (2) imply that $\bbX_\lambda$ is the closure of $\bbX_{\lambda}^\circ$ in $\bbX$, and that $\dim ( \bbX_\lambda ) = \dim ( \bbX_\lambda^\circ ) = m$.
 
Firstly, by Proposition \ref{Prop:Lagrangian-Loci}, we have
\[ [ Z_{\lambda}( H_\bullet ) ] \ = \ \Qt_{\lambda}(\mathbf{E}^\vee) \cap [\LG(W_p)] . \]
The subvariety $\bbX^\circ_{\lambda}$ is the preimage of $Z_{\lambda}(H_\bullet)$ under $\ev_p \colon \LQo \rightarrow \LG(W_p)$, and $\mathcal{E}^\vee(p) = \ev_p^*(\mathbf{E}^\vee)$. Hence we have equality
\begin{equation} [\bbX^\circ_{\lambda} ] \ = \ \Qt_{\lambda}(\mathcal{E}^\vee(p)) \cap [\bbX^\circ] \ \in \ \Chow_{m}(\bbX^\circ) . \label{QtXo} \end{equation}
Equivalently, the image of $[\bbX_{\lambda}^\circ]$ is $\Qt_{\lambda}(\mathcal{E}^\vee(p)) \cap [\bbX^\circ]$ under the natural homomorphism $\Chow_m ( \bbX_{\lambda}^\circ ) \rightarrow \Chow_m ( \bbX^\circ )$. (By convention, the image of $[\bbX_{\lambda}^\circ]$ is also denoted by $[\bbX_{\lambda}^\circ]$.) By (1) and (2), this in turn implies that the class $\Qt_{\lambda}(\mathcal{E}^\vee(p)) \cap [\bbX]$ lies in the image of the homomorphism $\Chow_m ( \bbX_{\lambda} ) \rightarrow \Chow_m ( \bbX )$.

Now let us show that $\Qt_{\lambda}(\mathcal{E}^\vee(p)) \cap [\bbX]$ is the image of $[\bbX_{\lambda}]$ under the homomorphism $\Chow_m ( \bbX_{\lambda} ) \rightarrow \Chow_m ( \bbX )$; that is,
\begin{equation} [\bbX_{\lambda} ] \ = \  \Qt_{\lambda}(\mathcal{E}^\vee(p)) \cap [\bbX] \ \in \ \mathrm{CH}_{m}(\bbX). \end{equation}

By \cite[p.\ 21]{Fu}, there is a commutative diagram
$$ \xymatrix{ \Chow_m ( \bbX_{\lambda} \setminus \bbX_{\lambda}^\circ ) \ar[d]\ar[r]^-{i_*}& \Chow_m ( \bbX_{\lambda} ) \ar[d]\ar[r]^{j^*}& \Chow_m ( \bbX_\lambda^\circ )\ar[d]\ar[r]&0\\
 \Chow_m ( \bbX\setminus \bbX^\circ )\ar[r]^-{\tilde{i}_*}&\Chow_m ( \bbX)\ar[r]^{\tilde{j}^*}& \Chow_m ( \bbX^\circ )\ar[r]&0. }
$$
Here $i, j, \tilde{i}, \tilde{j}$ are embeddings. The group $\Chow_m ( \bbX_{\lambda} \setminus \bbX_{\lambda}^\circ )$ is zero by the dimension assumption (2), so $j^*$ is an isomorphism. Since $j^*[\bbX_{\lambda}] = [\bbX_{\lambda}^\circ]$ and
$$ \tilde{j}^* \left( \Qt_{\lambda}(\mathcal{E}^\vee(p)) \cap [\bbX] \right) \ = \ \Qt_{\lambda} ( \mathcal{E}^\vee(p) ) \cap [ \bbX^\circ ] , $$
by a diagram chase we see that the image of $[\bbX_{\lambda}]$ under the homomorphism $\Chow_m(\bbX_{\lambda})\rightarrow \Chow_m(\bbX))$
is $\Qt_{\lambda}(\mathcal{E}^\vee(p)) \cap [\bbX].$ This proves the lemma. \end{proof}

\begin{prop} \label{prop:Chern} Suppose $\bbX = \LQ$ has property $\cP$. For $1 \le k \le n$, let $H_{n+1-k}$ be an isotropic subspace of $W_p$ for a point $p\in C.$ Then in $\Chow_*(\bbX)$ we have the equality
$$ \left[ \bbX_k ( H_{n+1-k}; p ) \right] \ = \ c_k ( \mathcal{E}^\vee(p) ) \cap [\bbX] . $$ \end{prop}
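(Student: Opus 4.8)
The plan is to obtain the formula as the special case $\lambda=(k)$ of Lemma~\ref{Lemma:Chern Class}. Since a length-one partition gives $\Qt_{(k)}(X)=E_k(X)$, specializing the variables to the Chern roots yields $\Qt_{(k)}(\mathcal{E}^\vee(p))=c_k(\mathcal{E}^\vee(p))$; and by Remark~\ref{independent_p} the relevant degeneracy locus $\bbX_{(k)}(H_\bullet;p)$ equals $\bbX_k(H_{n+1-k};p)$ and depends only on the isotropic subspace $H_{n+1-k}\subset W_p$. So everything reduces to checking hypotheses (1) and (2) of Lemma~\ref{Lemma:Chern Class} for $\lambda=(k)$: that $\bbX_k(H_{n+1-k};p)$ has pure codimension $k$ in $\bbX$, and that its intersection with the nonsaturated locus $\bbX\setminus\bbX^\circ$ has strictly smaller dimension.

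For hypothesis (1), I would first exhibit $\bbX_k(H_{n+1-k};p)$ as a determinantal locus: it is the set where the composite of bundle maps $\mathcal{E}(p)\to\mathcal{W}(p)\to\mathcal{W}(p)/\mathcal{H}^\perp_{n+1-k}$ drops rank to $\le n-k$, where $\mathcal{E}(p)$ is a rank-$n$ bundle on $\bbX$ (the universal subsheaf $\mathcal{E}$ is $\pi_C$-flat with torsion-free, hence locally free, restrictions to the curve slices) and the target is trivial of rank $n+1-k$. The standard bound for determinantal loci then gives $\mathrm{codim}\le(n-(n-k))((n+1-k)-(n-k))=k$ for every irreducible component. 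For the reverse inequality one works componentwise: over the dense open $\bbX^\circ$ (dense in each component by property $\cP$) the locus is $\ev_p^{-1}\big(Z_k(H_{n+1-k})\big)$ with $Z_k(H_{n+1-k})\subset\LG(W_p)$ of codimension $k$, so — once one knows the evaluation map does not collapse this codimension — every component meeting $\bbX^\circ$ has codimension exactly $k$; that no component lies entirely inside $\bbX\setminus\bbX^\circ$ follows from the estimate in (2).

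For hypothesis (2), I would bound $\bbX_k(H_{n+1-k};p)\cap(\bbX\setminus\bbX^\circ)$ one stratum at a time via Lemma~\ref{BrFibration}, which presents $\bbX\setminus\bbX^\circ=\bigcup_{r\ge1}\cB_r$ with $f_r\colon\cB_r\to LQ^\circ_{e+r}(W)$ topologically a fiber bundle with irreducible fibers of dimension $nr$. For $[E\to W]\in\cB_r$ one distinguishes cases by the length $s$ of $\bE/E$ at $p$: if $s=0$ then $E_p\cong\bE_p$ and the rank condition on $E$ coincides with the analogous one on the saturation $\bE$; if $s\ge k$ the image of $E_p$ in $W_p$ has dimension $\le n-k$, so $E$ lies in $\bbX_k(H_{n+1-k};p)$ automatically. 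Combining these with an upper bound $\dim LQ^\circ_{e+r}(W)\le D(n,e+r,\ell)=D(n,e,\ell)-(n+1)r$ and the codimension-$k$ property of the Schubert-type locus inside $LQ^\circ_{e+r}(W)$, one should obtain $\dim\big(\bbX_k(H_{n+1-k};p)\cap(\bbX\setminus\bbX^\circ)\big)\le D(n,e,\ell)-k-1$, which proves (2) and completes (1).

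The main obstacle is this dimension bookkeeping on the nonsaturated strata, together with the parallel point that $\ev_p$ does not collapse the codimension of $Z_k(H_{n+1-k})$ over $\bbX^\circ$: both amount to an inductive reuse of exactly the sort of expected-dimension and genericity statements about Lagrangian Quot schemes supplied by \cite{CCH} (cf.\ Proposition~\ref{prop:irreducible}), now applied to the finitely many auxiliary degrees $e+r$ and to the subbundle loci $LQ^\circ_{e+r}(W)$. By contrast, the reduction $\Qt_{(k)}=c_k$ and the determinantal upper bound on the codimension of $\bbX_k(H_{n+1-k};p)$ are routine.
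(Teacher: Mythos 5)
Your reduction to Lemma \ref{Lemma:Chern Class} with $\lambda=(k)$, via $\Qt_{(k)}=c_k$ and Remark \ref{independent_p}, is exactly the paper's starting point, and your use of the determinantal bound for codimension $\le k$ and of the stratification $\bbX\setminus\bbX^\circ=\bigcup_{r\ge 1}\cB_r$ with $f_r\colon\cB_r\to LQ^\circ_{e+r}(W)$ matches the paper's skeleton. But the central point you leave open --- that $\ev_p$ ``does not collapse the codimension'' of $Z_k(H_{n+1-k})$ --- is a genuine gap, and it cannot be filled the way you suggest. For an arbitrary fixed isotropic subspace $H_{n+1-k}$ and an arbitrary $W$ with property $\cP$, the preimage $\ev_p^{-1}(Z_k(H_{n+1-k}))$ in $\bbX^\circ$ (and likewise the analogous locus in $LQ^\circ_{e+r}(W)$) can very well have codimension $<k$: property $\cP$ says nothing about the position of $\ev_p(\bbX^\circ)$ relative to a fixed Schubert variety, and the results of \cite{CCH} (Proposition \ref{prop:irreducible}) concern dimensions of Lagrangian Quot schemes, not the behaviour of evaluation maps against fixed flags. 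The paper's mechanism, which your proposal never invokes, is to first replace $H_{n+1-k}$ by a general translate $\gamma\cdot H_{n+1-k}$, $\gamma\in\Sp(W_p)$ (the cycle class being unchanged under the connected group action), and then apply Kleiman's transversality theorem \cite{Kle} to the morphisms $\ev_p\colon LQ^\circ_{e+r}(W)\to\LG(W_p)$ for every $r\ge 0$; without this step the codimension claims you need are simply false in general.

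Two further points in your treatment of the nonsaturated strata would also need repair. First, your case split by the length $s$ of $\overline{E}/E$ at $p$ omits the intermediate range: when the rank $l$ of $E_p\to\overline{E}_p$ satisfies $n-k+1\le l\le n-1$, membership in $\bbX_k$ is neither automatic nor equivalent to the condition on $\overline{E}$, and the paper must combine the codimension $(n-l)^2$ of the rank-$l$ stratum in $\Quot^{0,r}(\overline{E})$ with a Schubert condition of codimension $l-n+k$ on the image in $\Gr(l,\overline{E}_p)$, giving total codimension $(n-l)^2+(k+l-n)\ge k$; this count is where the estimate actually comes from, and your $s=0$ versus $s\ge k$ dichotomy skips it. Second, your proposed bound $\dim LQ^\circ_{e+r}(W)\le D(n,e+r,\ell)$ is unjustified and false for general $W$: nothing is assumed about the Quot schemes at the auxiliary degrees $e+r$, and they may be oversized (this is precisely the pathology the paper is built to avoid). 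It is also unnecessary: the paper only uses that each condition cuts codimension at least $k$ inside $\cB_r$ (base conditions via Kleiman on each component of $LQ^\circ_{e+r}(W)$, whatever its dimension, and fiber conditions via the count above, through Lemma \ref{BrFibration}), together with the fact that for $r\ge 1$ the stratum $\cB_r$ lies in the complement of the dense open $\bbX^\circ$ and hence has dimension strictly less than $D(n,e,\ell)$.
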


\begin{proof}  For general $\gamma \in \Sp ( W_p )$, we have $[ \bbX_k ( H_{n+1-k}; p ) ] = [ \bbX_k (\gamma \cdot H_{n+1-k};p ) ]$. Thus the equality would follow if we show that $\bbX_k (\gamma \cdot H_{n+1-k};p )$ satisfies the conditions of Lemma \ref{Lemma:Chern Class}. 
 As each irreducible component of has codimension at most $k$ in $\LQ$, to check these conditions 
 it is enough to show that the intersection $\bbX_k( \gamma \cdot H_{n + 1 - k} ; p) \cap \cB_r$ has codimension at least $k$ in $\cB_r$ for each $r \ge 0$. We adapt the approach of \cite[Theorem 1.4]{Be}.

To ease notation, write $H := H_{n+1-k}$. For a fixed $r \ge 0$, we define the degeneracy locus
\[ \bbY_k^\circ ( H ; p ) \ := \ \left\{ [ F \to W ] \in LQ_{e+r}^\circ ( W ) \: | \: \rank ( F_p \to W_p / H^\perp ) \ \le \ n - k \right\} . \]
This is the preimage by $\ev_p \colon LQ_{e+r}^\circ \to \LG ( W_p )$ of the degeneracy locus
\[ Z_{k} ( H ) \ = \ \{ \Lambda \in \LG \left( W_p \right) \: | \: \dim ( \Lambda \cap H^\perp ) \ \ge \ k \} \]
(cf.\ Definition \ref{degeneracy-first}), which has codimension $k$. Now ${\ev_p}|_{LQ_{e+r}^\circ (W)}$ is a morphism, so by Kleiman \cite[Theorem 2]{Kle}, for a general $\gamma \in \Sp( W_p )$, the locus $\bbY_k^\circ ( \gamma \cdot H ; p)$ is of codimension $k$ in $LQ_{e+r}^\circ (W)$ unless it is empty.

Consider now the set
\[ \{ E \in \bbX_k ( \gamma \cdot H ; p ) \cap \cB_r \: | \: \rank ( \bE_p \to W_p / H^\perp ) \ \le \ n - k \} . \]
This is precisely $f_r^{-1} \left( \bbY_k^\circ ( \gamma \cdot H ; p ) \right)$, where $f_r \colon \cB_r \to LQ_{e+r}^\circ (W)$ is as defined in {\S}\:\ref{NonSat}. By Lemma \ref{BrFibration} and the last paragraph, $f_r^{-1} \left( \bbY_k^\circ ( \gamma \cdot H ; p )\right)$ has codimension $k$ in $\cB_r$.

(For the remainder of the proof, we do not use the assumption that $\gamma$ is general.) It remains to treat the situation where $\bE_p \to W_p / H^\perp$ is surjective. In this case, $E$ can belong to $\bbX_k ( H ; p )$ only if $E$ fails to be saturated at $p$ (in particular, $r \ge 1$). Since
\[ \mathrm{length} ( \bE_p / E_p ) \ \le \ \deg ( \bE / E ) \ = \ r , \]
we have $\rank ( \psi_p \colon E_p \to \bE_p) \ge n - r$. Thus, for each $l$ satisfying
\[ \max\{ 0, n - r \} \ \le \ l \ \le \ n - 1 , \]
we consider the set $\{ E \in \bbX_k ( H ; p ) \cap \cB_r \: | \: \rank ( E_p \to \bE_p ) \ \le \ l \}$.

Now for a fixed $F \in LQ_{e+r}^\circ (W)$, we claim that the locus
\begin{equation} \{ [ E \to F ] \in \Quot^{0, r} ( F ) \: | \: \rank ( E_p \to F_p ) \ = \ l \} \label{rankl} \end{equation}
is of codimension $(n-l)^2$ in $\Quot^{0, r} ( F )$. For; the image of $E_p \to F_p$ is determined by the choice of a point in $\Gr ( l, F_p )$, which has dimension $l(n-l)$. The remaining torsion of $F/E$ has degree $r-(n-l)$. This is determined by the choice of a point in an open subset of $\Quot^{0, r-n+l} (F)$, which has dimension $(r-n+l)n$. Thus, as desired, the dimension of (\ref{rankl}) is
\[ l(n-l) + (r-n+l)n %\ = \ rn - (n-l)n + l(n-l)
\ = \ \dim \Quot^{0, r} (F) - (n-l)^2 . \]

Suppose firstly that $l \le n - k$. Then $E_p \to W_p / H^\perp$ cannot be surjective. In this case $k \le n-l \le (n-l)^2$, so (\ref{rankl}) has codimension at least $k$ in $\Quot^{0, r} (F) = f_r^{-1} ( F )$. By Lemma \ref{BrFibration}, for $l \le n - k$ the union of the loci (\ref{rankl}) as $F$ varies in $LQ_{e+r}^\circ$ is of codimension at least $k$ in $\cB_r$.

On the other hand, suppose $l \ge n - k + 1$. Noting that
\[ \Ker \left( \psi ( E_p ) \to W_p / H^\perp \right) \ = \ \psi ( E_p ) \cap \left( H^\perp \cap F_p \right) , \]
we see that $E$ belongs to $\bbX_k ( H ; p )$ if and only if
\[ \dim \left( \psi ( E_p ) \cap H^\perp \cap F_p \right) \ \ge \ l - n + k \ = \ \dim \psi ( E_p ) - \dim W_p / H^\perp + 1 . \]
This is a Schubert condition on $\psi ( E_p ) \in \Gr ( l, F_p )$, of codimension $l - n + k$.
\begin{comment}
As $\Ker ( \psi ( E_p ) \to W_p / H^\perp ) = \psi ( E_p ) \cap ( H^\perp \cap F_p )$, this is equivalent to saying that $\psi ( E_p )$ belongs to the special Schubert cycle
\begin{equation} \{ \Pi \in \Gr ( l, F_p ) \: | \: \dim ( \Pi \cap  ( H^\perp \cap F_p ) ) \ge l - n + k \} , \label{Schubert} \end{equation}
Let us estimate the dimension of (\ref{Schubert}). Since $F_p \to W_p / H_{n+1-k}^\perp$ is surjective,
\[ \dim ( H^\perp \cap F_p ) \ = \ \rank (W) - \dim (W_p / H^\perp) + \rank (F) - \rank (W) %\ = \ n - (n - k + 1)
\ = \ k - 1 . \]
Any $\Pi$ in (\ref{Schubert}) fits into a diagram
\[ \xymatrix{ 0 \ar[r] & \lambda \ar[r] \ar[d]^= & \Pi \ar[r] \ar[d] & \Pi/\lambda \ar[r] \ar[d] & 0 \\
 0 \ar[r] & \lambda \ar[r] & F \ar[r] & F/\lambda \ar[r] & 0 } \]
where $\lambda$ is a subspace of $H^\perp \cap F_p$ of dimension $l - n + k \ge 1$. The dimension of the locus of such $\Pi$ is bounded by
\[ \dim \Gr ( l - n + k , H^\perp \cap F_p ) + \dim \Gr \left( l - ( l - n + k ) , n - ( l - n + k ) \right) \]
One computes that this is of codimension $k + l - n$ in $\Gr \left( l , F_p \right)$.
%\begin{comment}
Details: The codimension is
\[ l(n-l) - ( l - n + k ) ( k - 1 - (l-n+k)) - ( n - k )( n - l ) \]
\[ l(n-l) - ( l - n + k ) ( n - l - 1 ) - ( n - k )( n - l ) \]
\[ l(n-l) - ( l - n + k ) ( n - l) - ( n - k )( n - l ) + ( l - n + k ) \]
\[ (n-l) ( l - (l - n + k) - (n - k) ) + (l - n + k) \]
\[ (n-l) ( l - l + n - k - n + k ) + (l - n + k ) = l - n + k \]
which has codimension $k + l - n$ in $\Gr ( l, F_p )$.
\end{comment}
 Thus for $l \ge n - k + 1$, the locus
\[ \{ E \in f_r^{-1} (F) \cap \bbX_k ( H ; p ) \: | \: \rank ( \psi_p \colon E_p \to F_p ) \ = \ l \} \]
is of codimension
\[ (n-l)^2 + (k + l - n) \ = \ k + (n-l)(n-l-1) \ \ge \ k \]
in $f_r^{-1} (F) \cong \Quot^{0, r} ( F )$. (Notice that we have equality if $l = n-1$.) Letting $F$ vary in $LQ_{e+r}^\circ (W)$ as before, we see that the locus of $E \in \bbX_k ( p ; H )$ such that $\rank ( E_p \to \bE_p ) = l$ is of codimension at least $k$ in $\cB_r$. This completes the proof. \end{proof}

\noindent In Corollary \ref{cor-class}, we will give a similar result for $\lambda = \rho_n$.

\subsection{The Hecke transform}

In this subsection, given a vector bundle $V$ and a divisor $D$ on $C$, we denote $V \otimes \cO_C(D)$ by $V(D)$.

Let $W$ be a bundle with symplectic form $\omega \colon W \otimes W \rightarrow L$. Fix $p \in C$ and choose a subspace $\Lambda \subset W_p.$ Let $W_\Lambda$ be the Hecke transform of $W$, which is defined as the kernel of the composition map $W \rightarrow W_p \rightarrow W_p/\Lambda$. Then we have the exact sequence of sheaves
\be \label{hecke-transform}
0 \rightarrow \ W_\Lambda \ \rightarrow \ W \ {\rightarrow} \ W_p/\Lambda \ \rightarrow \ 0 .
\ee
By \cite[Proposition 2.2]{BG}, if $\Lambda$ is a Lagrangian subspace of $W_p$, then $W_\Lambda$ is bundle of degree $\deg(W)-n$ admitting the symplectic form
$$ \omega_\Lambda \colon W_\Lambda \otimes W_\Lambda \ \rightarrow \ L(-p) $$
and fitting into the commutative diagram
\be \label{commutative diagram1} \xymatrix{ W_\Lambda \otimes W_\Lambda \ar[r] \ar[d]_{\omega_\Lambda} & W \otimes W \ar[d]^{\omega} \\
L(-p) \ar[r] & L. } \ee

Dualizing (\ref{hecke-transform}), we obtain a sequence
\[
0 \to W^\vee \to (W_\Lambda)^\vee \to \C^n \otimes \cO_p \to 0 .
\]
Here $\cO_p$ is a skyscraper sheaf of length one supported at $p$. Using the isomorphisms $W \cong W^\vee \otimes L$ and $W_\Lambda \cong (W_\Lambda)^\vee \otimes L(-p)$, we obtain a sequence
\be \label{heckeseq}
0 \to W \to W^\Lambda  \to \C^n \otimes \cO_p \to 0,
\ee
where $W^\Lambda := W_\Lambda(p)$ is an $L(p)$-valued symplectic bundle. In this way, to each Lagrangian subspace $\Lambda \subset W_p$ we can associate a symplectic bundle $W^\Lambda$ fitting into (\ref{heckeseq}). Since $\Lambda^\vee \subset (W_\Lambda)^\vee_p$, we may regard $\Lambda^\vee$ as a Lagrangian subspace of $(W^\Lambda)_p$. If $E \subset W$ is a subsheaf, then $E$ can be viewed as a subsheaf of $W^\Lambda$ via the inclusion $W \subset W^\Lambda$. Furthermore, if $E\subset W$ is a Lagrangian subsheaf, so is $E \subset W^\Lambda$. Hence there is a well-defined morphism
$$ \Psi^{\Lambda} \colon LQ_e(W) \ \rightarrow \ LQ_e(W^\Lambda) . $$
One can check that $\Psi^{\Lambda}$ is an embedding. Furthermore, $\Psi^\Lambda ([E \to W])$ belongs to $LQ_e^\circ(W^\Lambda)$ if and only if $[E \to W] \in \LQo$ and $E_p \cap \Lambda = 0$ in $W_p$.

\begin{prop} \label{prop:relation} Fix $p \in C$ and a Lagrangian subspace $\Lambda \subset W_p$. Then the image of $\Psi^{\Lambda}$ coincides with the Lagrangian degeneracy locus $\bbX_{\rho_n}( \Lambda^\vee;p) \subseteq LQ_e ( W^\Lambda )$.
\end{prop}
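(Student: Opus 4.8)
The plan is to prove that $\Psi^\Lambda$ restricts to an isomorphism onto $\bbX_{\rho_n}(\Lambda^\vee;p)$; since $\Psi^\Lambda$ is a closed embedding, this yields the asserted equality of its image with that degeneracy locus. By Remark \ref{independent_p}, a point $[\psi\colon E\to W^\Lambda]\in LQ_e(W^\Lambda)$ lies in $\bbX_{\rho_n}(\Lambda^\vee;p)$ exactly when $\psi_p(E_p)\subseteq\Lambda^\vee$ inside $(W^\Lambda)_p$, where $\psi_p\colon E_p\to (W^\Lambda)_p$ is the induced map of fibers at $p$. (Note that $E$, being a subsheaf of a bundle over a curve, is itself locally free, so $E_p$ has dimension $n$.) The one non-formal ingredient is the location of $\Lambda^\vee$ inside $(W^\Lambda)_p$: I will use that $\Lambda^\vee\subset (W^\Lambda)_p$ equals the kernel of the fiber map $(W^\Lambda)_p\to\C^n$ induced by the surjection in (\ref{heckeseq}), equivalently the image of the fiber map $W_p\to (W^\Lambda)_p$ induced by the inclusion $W\hookrightarrow W^\Lambda$. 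This comes straight out of the construction: by definition $\Lambda^\vee\subset (W_\Lambda)^\vee_p$ is the image of $W^\vee_p\to (W_\Lambda)^\vee_p$ obtained by dualizing (\ref{hecke-transform}), which by the local $\mathrm{Tor}$ long exact sequence equals $\ker\big((W_\Lambda)^\vee_p\to\C^n\big)$; tensoring the dualized sequence with $L$ to produce (\ref{heckeseq}) and then restricting to the fiber at $p$ carries both descriptions along.

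Granting this, the two inclusions are formal. For the inclusion of the image of $\Psi^\Lambda$ in $\bbX_{\rho_n}(\Lambda^\vee;p)$: a point of the image has the form $[E\xrightarrow{j} W\hookrightarrow W^\Lambda]$, and its fiber map at $p$ factors through $W_p\to (W^\Lambda)_p$, whose image is $\Lambda^\vee$, so the condition of Remark \ref{independent_p} holds. For the reverse inclusion, take $[\psi\colon E\to W^\Lambda]$ with $\psi_p(E_p)\subseteq\Lambda^\vee=\ker\big((W^\Lambda)_p\to\C^n\big)$. Then the composite $E\xrightarrow{\psi} W^\Lambda\to\C^n\otimes\cO_p$ vanishes, because a homomorphism from a locally free sheaf to the skyscraper $\C^n\otimes\cO_p$ is zero if and only if its restriction to the fiber at $p$ is zero. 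Hence $\psi$ factors as $E\xrightarrow{j} W\hookrightarrow W^\Lambda$ with $j$ a rank-$n$, degree-$e$ subsheaf inclusion. Furthermore $\omega$ and $\omega^\Lambda$ agree on $W\otimes W$: they agree over $C\setminus\{p\}$, where $W\hookrightarrow W^\Lambda$ is an isomorphism and both forms restrict from $\omega$, hence everywhere because the sheaf $\mathcal{H}om(W\otimes W,L(p))$ is torsion-free on the integral curve $C$ (cf.\ (\ref{commutative diagram1})). Therefore $E$ isotropic for $\omega^\Lambda$ forces $E$ isotropic for $\omega$, and $\rank(E)=n$ makes $E\subset W$ Lagrangian. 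So $[j\colon E\to W]\in LQ_e(W)$ with $\Psi^\Lambda([j])=[\psi]$. This proves equality of the underlying sets.

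To obtain the isomorphism of schemes one repeats the argument with families: an $S$-point of $\bbX_{\rho_n}(\Lambda^\vee;p)$ is an $S$-point $\mathcal{E}_S\subset\pi_C^*W^\Lambda$ of $LQ_e(W^\Lambda)$ for which the composite $\mathcal{E}_S\to\pi_C^*W^\Lambda\to\pi_C^*(\C^n\otimes\cO_p)$ is zero, equivalently for which $\mathcal{E}_S$ factors through $\pi_C^*W=\ker\big(\pi_C^*W^\Lambda\to\pi_C^*(\C^n\otimes\cO_p)\big)$; by the form compatibility just noted, such an $\mathcal{E}_S$ is automatically an $S$-point of $LQ_e(W)$. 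This gives a functorial bijection $LQ_e(W)(S)\cong\bbX_{\rho_n}(\Lambda^\vee;p)(S)$, whence the image of $\Psi^\Lambda$ equals $\bbX_{\rho_n}(\Lambda^\vee;p)$ as closed subschemes. The fiber-wise kernel/image identifications of the first paragraph remain valid over $S$ since $\C^n\otimes\cO_p$ pulls back to a sheaf on $S\times C$ flat over $S$, so the relevant base changes stay exact.

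The delicate point, and the one place where real care is needed, is the identification of $\Lambda^\vee$ inside $(W^\Lambda)_p$ in the first paragraph: tracking this Lagrangian subspace through the dualization and the twist by $L$ relating (\ref{hecke-transform}) to (\ref{heckeseq}), confirming it agrees with both the stated kernel and the stated image, and verifying that this is stable in families. Everything afterwards reduces to the universal property of a kernel together with the elementary fact that a map into a skyscraper sheaf is detected on a single fiber.
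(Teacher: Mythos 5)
Your proof is correct and takes essentially the same approach as the paper's (much terser) argument: the paper simply notes that lying in $\bbX_{\rho_n}(\Lambda^\vee;p)$ means the fiber map $E_p\to (W^\Lambda)_p$ factors through $\Lambda^\vee$, which is equivalent to $E\to W^\Lambda$ lifting to a Lagrangian subsheaf of $W$, and your identification of $\Lambda^\vee$ with $\ker\left((W^\Lambda)_p\to\C^n\right)$, the skyscraper-sheaf observation, and the compatibility of $\omega$ with $\omega^\Lambda$ are precisely the details behind that equivalence. The additional $S$-point discussion goes beyond what the paper records but does not change the method.
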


\begin{proof} By definition, $[E \to W^\Lambda]$ belongs to $\bbX_{\rho_n}(\Lambda^\vee ; p)$ if and only if
%$ \mathrm{rk}(\cE(p) \rightarrow \cW(p)/\c\Lambda)_\varphi = 0$, which is equivalent to that
 the map $E_p \rightarrow (W^\Lambda)_p$ factorizes via $\Lambda^\vee \subset (W^\Lambda)_p$. This is equivalent to $E \to W^\Lambda$ lifting to a degree $e$ Lagrangian subsheaf of $W$. \end{proof}

\begin{comment}
\begin{proof} Note that the image of $\Psi^\Lambda$ consists of Lagrangian subsheaves  $E \subset W^\Lambda$  consisting of local sections $t$ with $t(p) \in \Lambda^\vee$. %or equivalently
the corresponding linear map of vector spaces $E_p\rightarrow W_p$ factors through $E_p \rightarrow \Lambda$. 
Also, $\psi = [E \subset W^\Lambda] $ lies on $ \bbX_{\rho_n}(\Lambda^\vee ; p) $ if and only if
$ \mathrm{rk}(\cE(p) \rightarrow \cW(p)/\c\Lambda)_\varphi = 0$, which is equivalent to that
 the map $E_p \rightarrow (W^\Lambda)_p$ factors through $\Lambda^\vee \subset (W^\Lambda)_p$. \end{proof}
\end{comment}

\begin{cor}\label{cor-class} Let $W$ be any symplectic bundle, and let $\Lambda$ be a general Lagrangian subspace of a fiber $p \in C$. Suppose that $\bbX := LQ_e(W^\Lambda)$ and $\LQ$ have property $\cP$. Then
\be \label{equality-class-gen} [\bbX_{\rho_n}(\Lambda^\vee)] \ = \ \Qt_{\rho_n}(\cE(p)^\vee)\cap [\bbX]  , \ee
in $\Chow_*\left( LQ_e ( W^\Lambda ) \right)$, where $\cE$ denotes the universal sheaf on $LQ_e ( W^\Lambda )$. \end{cor}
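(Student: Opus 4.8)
The plan is to combine Proposition \ref{prop:relation}, which identifies $\bbX_{\rho_n}(\Lambda^\vee;p)$ with the image of the embedding $\Psi^\Lambda \colon \LQ \hookrightarrow LQ_e(W^\Lambda)$, together with Lemma \ref{Lemma:Chern Class} applied to the partition $\lambda = \rho_n$. Since $\Qt_{\rho_n}(X)$ is not in general a single Chern class, Proposition \ref{prop:Chern} does not apply directly; instead we must verify the two hypotheses of Lemma \ref{Lemma:Chern Class} for $\bbX_{\rho_n}(\Lambda^\vee;p)$ when $\Lambda$ is a general Lagrangian subspace of $W_p$. Assuming those hold, Lemma \ref{Lemma:Chern Class} gives $[\bbX_{\rho_n}(\Lambda^\vee;p)] = \Qt_{\rho_n}(\cE(p)^\vee)\cap[\bbX]$, which is exactly (\ref{equality-class-gen}).

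So the substance of the argument is checking, for general $\Lambda$:
\begin{enumerate}
\item every irreducible component of $\bbX_{\rho_n}(\Lambda^\vee;p)$ has codimension $|\rho_n| = \tfrac{n(n+1)}{2}$ in $\bbX = LQ_e(W^\Lambda)$; and
\item $\dim\bigl(\bbX_{\rho_n}(\Lambda^\vee;p)\cap(\bbX\setminus\bbX^\circ)\bigr) < \dim\bbX_{\rho_n}(\Lambda^\vee;p)$.
\end{enumerate}
For the first, I would use Proposition \ref{prop:relation}: the image of $\Psi^\Lambda$ is $\bbX_{\rho_n}(\Lambda^\vee;p)$, and $\Psi^\Lambda$ is an embedding, so $\bbX_{\rho_n}(\Lambda^\vee;p) \cong \LQ$. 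Since $\LQ$ has property $\cP$, it is equidimensional of dimension $D(n,e,\ell)$, while $LQ_e(W^\Lambda)$ has property $\cP$ and hence dimension $D(n,e,\ell+1)$ (because $W^\Lambda$ is $L(p)$-valued, with $\deg L(p) = \ell+1$). The difference $D(n,e,\ell+1) - D(n,e,\ell) = \tfrac{n(n+1)}{2}$ by the formula for $D$, giving exactly the required codimension — with no genericity of $\Lambda$ needed for this part. For the second condition, I would argue that under $\Psi^\Lambda$ the preimage of $\bbX\setminus\bbX^\circ$ inside $\bbX_{\rho_n}(\Lambda^\vee;p)\cong\LQ$ consists of those $[E\to W]$ that are non-saturated in $W^\Lambda$, which (after the discussion preceding Proposition \ref{prop:relation}, where $\Psi^\Lambda([E\to W])\in LQ_e^\circ(W^\Lambda)$ iff $[E\to W]\in\LQo$ and $E_p\cap\Lambda = 0$) is contained in the union of the non-saturated locus $\LQ\setminus\LQo$ and the locus where $E_p\cap\Lambda\neq 0$. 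The non-saturated locus has dimension $<D(n,e,\ell)$ by property $\cP$ on $\LQ$; for the remaining part one invokes Kleiman transversality, as in the proof of Proposition \ref{prop:Chern}, applied to $\ev_p\colon\LQo\to\LG(W_p)$ and the codimension-$\geq 1$ locus $\{\Pi\in\LG(W_p)\mid \Pi\cap\Lambda\neq 0\}$, using generality of $\Lambda$ to conclude this locus is a proper closed subset of $\LQo$.

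The main obstacle is the bookkeeping in condition (2): one must carefully translate the non-saturatedness of $E\subset W^\Lambda$ back through the Hecke sequence (\ref{heckeseq}) into conditions on $E\subset W$ and on the relative position of $E_p$ and $\Lambda$ in $W_p$, and check that none of the resulting strata fills up a whole component of $\bbX_{\rho_n}(\Lambda^\vee;p)$. The fibration structure of Lemma \ref{BrFibration} together with a Kleiman-type argument, exactly parallel to the loop over $\cB_r$ in the proof of Proposition \ref{prop:Chern}, should handle this, but it requires identifying $\bbX\setminus\bbX^\circ$ correctly under $\Psi^\Lambda$ and keeping track of the degree shift. Once (1) and (2) are in place, the conclusion is immediate from Lemma \ref{Lemma:Chern Class}.
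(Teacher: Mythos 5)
Your proposal is correct and follows essentially the same route as the paper: apply Lemma \ref{Lemma:Chern Class} with $\lambda=\rho_n$, get condition (1) from Proposition \ref{prop:relation} and the dimension count $D(n,e,\ell+1)-D(n,e,\ell)=\tfrac{1}{2}n(n+1)$, and get condition (2) by noting that the nonsaturated points of $\Psi^\Lambda(\LQ)$ come either from $\LQ\setminus\LQo$ (small by property $\cP$) or from $E_p\cap\Lambda\neq 0$ (a proper condition for general $\Lambda$). The paper's own proof is just a more compressed version of this; the extra stratification over $\cB_r$ you worry about in your last paragraph is not needed here.
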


\begin{proof} To prove the corollary, we apply Lemma \ref{Lemma:Chern Class}. By hypothesis, both $\LQ$ and $LQ_e ( \tW )$ are of expected dimension. Hence by Proposition \ref{prop:relation}, every component of $\bbX_{\rho_n}(\Lambda^\vee ; p) = \Psi^\Lambda \left( \LQ \right)$ is of codimension
\[ D ( n, e, \ell + 1 ) - D (n, e, \ell ) 
%= -(n+1) e - \frac{1}{2}n(n+1)(g-2-\ell) + (n+1)e + \frac{1}{2}n(n+1)(g-1-\ell) 
\ = \ \frac{1}{2}n(n+1) \ = \ |\rho_n| . \]
This gives condition $(1)$ of Lemma \ref{Lemma:Chern Class}. Next, since $\Lambda$ is general, a general $[E \to W]$ in any component of $\LQ$ intersects $\Lambda$ in zero, and so $\Psi^\Lambda ( [ E \to W ] )$ is saturated for a generic $[E \to W]$. This implies that  the condition $(2)$ of Lemma \ref{Lemma:Chern Class} for $\bbX_{\rho_n}(\Lambda^\vee ; p)$ is satisfied. Thus, as desired, we have
$$ [\bbX_{\rho_n} ( \Lambda^\vee ; p) ] \  = \ \Qt_{\rho_n} ( \cE(p)^\vee ) \cap [ \bbX ] . \qedhere $$
\end{proof}

Now choose distinct points $q_1, \ldots, q_t \in C$ and Lagrangian subspaces $\Lambda_1, \ldots, \Lambda_t$ in $W_{q_1}, \ldots, W_{q_t}$ respectively. Let $\tW$ be the symplectic bundle obtained from a sequence of $t$ Hecke transforms associated to $\Lambda_1, \ldots, \Lambda_t$. Then $\tW$ fits into the sequence
\begin{equation} \label{Heckeseq}
0 \ \to \ W \ \to \ \tW \ \to \ \bigoplus_{j=1}^t\left( \C^n \otimes \cO_{q_j} \right) \ \to \ 0 .
\end{equation}
Then $\deg (\tW ) = \deg (W) + tn$, and as in the case above with $t = 1$, there is an embedding $\LQ \subset LQ_e(\tW)$.

\begin{lemma} \label{lemma-heckeP1} Let $W$ be any symplectic bundle. There is an integer $t$ such that if $\tW$ is the Hecke transform defined by a general choice of $t$ points $q_1, \ldots , q_t \in C$ and Lagrangian subspaces $\Lambda_i \subset W_{q_i}$, then the Lagrangian Quot scheme $LQ_e (\tW)$ has property $\cP$. \end{lemma}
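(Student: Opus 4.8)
The plan is to apply Proposition \ref{prop:irreducible} directly to the Hecke transform $\tW$. That proposition provides an integer $e(\tW)$ such that $\LQtW$ has property $\cP$ whenever $e \le e(\tW)$; as $e$ is fixed, it is enough to exhibit an integer $t$ and a general choice of $t$ points $q_1, \dots, q_t \in C$ and Lagrangian subspaces $\Lambda_i \subset W_{q_i}$ for which $e \le e(\tW)$. We will argue that $e(\tW) \to +\infty$ as $t \to +\infty$.

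The mechanism is that a Hecke transform widens the range of degrees for which the Lagrangian Quot schemes of a symplectic bundle are well behaved. Indeed, by the construction preceding (\ref{Heckeseq}), the bundle $\tW$ is $L'$-valued with $L' = L(q_1 + \cdots + q_t)$, so $\deg L' = \ell + t$, and the relevant Quot schemes become easier to control as $\deg L'$ grows (with $e$ fixed). For instance, the threshold $e_0$ of Proposition \ref{Prop-zero} computed for $\tW$ equals $\lceil \tfrac12 n(t + \ell + 1 - g)\rceil$, which tends to $+\infty$, and the vanishing $H^1(C, L' \otimes \Sym^2 E^\vee) = 0$ underlying expected-dimensionality (cf.\ Lemma \ref{very-stable} and (\ref{Dnel})) becomes easier to achieve. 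If the bound $e(W')$ extracted from \cite{CCH} depends only on the numerical data $g$, $n$, $\deg L'$ and is nondecreasing in $\deg L'$, then $e(\tW) = e(g, n, \ell+t) \to +\infty$ and we simply choose $t$ with $e \le e(g,n,\ell+t)$.

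If instead $e(W')$ genuinely depends on $W'$, one also needs to control the instability of $\tW$. Using (\ref{Heckeseq}), any Lagrangian subbundle $G \subset \tW$ restricts to a Lagrangian subsheaf $G \cap W$ of $W$ with $\deg(G \cap W) \ge \deg G - tn$, so the maximal degree of a Lagrangian subbundle of $\tW$ is at most $s(W) + tn$, where $s(W)$ is the corresponding maximum for $W$; and for a general choice of the $q_i$ and $\Lambda_i$ it is in fact the value attained by a general symplectic bundle of degree $n(\ell + t)$, by the principle that iterated general Hecke transforms of a fixed bundle sweep out a general bundle once $t$ exceeds the dimension of the relevant moduli space (the symplectic analogue of the classical statement for vector bundles). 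For such a general $\tW$ the bound from \cite{CCH} again gives a range of admissible $e$ tending to $+\infty$; choosing $t$ large enough, Proposition \ref{prop:irreducible} then yields property $\cP$ for $\LQtW$.

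The main obstacle is exactly this last point: establishing that a general sequence of $t$ Hecke transforms renders $\tW$ sufficiently ``balanced'' --- i.e.\ that its maximal Lagrangian subbundle degree does not grow at the crude rate $tn$, but only at the rate $\sim \tfrac12 nt$ of a general bundle --- so that the gain in $e(\tW)$ coming from $\deg L' = \ell + t$ is not cancelled. This requires the genericity of iterated general Hecke transforms in the symplectic setting; everything else is a routine combination of Proposition \ref{prop:irreducible}, the exact sequence (\ref{Heckeseq}), and the numerical bookkeeping above.
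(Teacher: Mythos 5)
Your plan has a genuine gap, and it is exactly the point you flag at the end. Proposition \ref{prop:irreducible} only supplies a bound $e(W')$ attached to each individual bundle $W'$; nothing in the paper says that $e(W')$ depends only on the numerical data $(g,n,\deg L')$, nor that it is monotone in $\deg L'$, so your first branch rests on an unproven assumption about the shape of the bound in \cite{CCH}. Your second branch replaces this by the claim that $t$ general iterated Hecke transforms of a fixed $W$ sweep out a general point of the moduli of symplectic bundles of degree $n(\ell+t)$; this ``principle'' is not established here (and even granting it, you would still need to know that $e(\tW)\ge e$ for a bundle general in moduli, which again is not what Proposition \ref{prop:irreducible} gives — Proposition \ref{Prop-zero} only controls the single degree $e_0$). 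So as written the argument does not close, and the crude bound $\deg(G\cap W)\ge \deg G - tn$ that you derive from (\ref{Heckeseq}) indeed cancels the gain from $\deg L'=\ell+t$, as you note.

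The paper's proof avoids all of this by never applying Proposition \ref{prop:irreducible} to $\tW$: it applies it once to the \emph{fixed} bundle $W$, producing $m_0$ with $LQ_{e-mn}(W)$ having property $\cP$ for $m\ge m_0$, and then uses the isomorphism $LQ_{e-mn}(W)\cong LQ_e(W(D))$, $[E\to W]\mapsto[E(D)\to W(D)]$, for a reduced divisor $D$ of degree $m$. The twist $W(D)$ is itself a symplectic Hecke transform of $W$, namely along $m$ pairs of complementary Lagrangian subspaces, one pair at each point of $\Supp(D)$; since property $\cP$ is open in families and $W(D)$ deforms to the Hecke transform along a general choice of $2m$ Lagrangian subspaces at distinct points, the general such $\tW$ with $\deg(\tW/W)=2mn$ has property $\cP$. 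The odd case $t=2m+1$ is handled by first performing one transform $W^\Lambda$ and repeating the argument for $W^\Lambda$. This degeneration-plus-openness argument is the missing idea: it transfers good behaviour from a very special Hecke transform (a twist) to the general one, with no need to control $e(\tW)$ or the position of $\tW$ in moduli. If you want to salvage your route, you would have to prove the genericity statement for iterated Hecke transforms together with a uniform bound $e(\cdot)$ on an open dense set of the moduli space, both of which are substantially harder than the lemma itself.
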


\begin{proof} By Proposition \ref{prop:irreducible}, there exists $m_0 \ge 0$ such that $LQ_{e-mn} \left( W \right)$ has property $\cP$ for all $m \ge m_0$. Let $D$ be a reduced effective divisor of degree $m \ge m_0$. Then $W (D)$ is an $L(2D)$-valued symplectic bundle, and
\[ LQ_{e-mn} ( W ) \ \cong \ LQ_e ( W ( D ) ) \]
via the map $[ E \to W ] \mapsto [ E(D) \mapsto W ( D ) ]$.

Now $W ( D )$ is a symplectic Hecke transformation of $W$. Precisely, $W (D)$ is obtained from $W$ by transforming along $m$ pairs of complementary Lagrangian subspaces of $W$, one pair from each point of $\Supp (D)$. Clearly $W(D)$ can be deformed to the Hecke transform defined by a general choice of $2m$ Lagrangian subspaces of distinct fibers of $W$. Therefore, as property $\cP$ is open in families, a general Hecke transform $\widetilde{W}$ with $\deg \left( \widetilde{W} / W \right) = 2mn$ has property $\cP$.

Similarly, let $W^\Lambda$ be any Hecke transform of $W$ along a single Lagrangian subspace $\Lambda$. Applying the above argument to $W^\Lambda$, there exists $m_1 \ge 0$ such that if $m \ge m_1$ and $\widetilde{W^\Lambda}$ is a general Hecke transform of $W^\Lambda$ with $\deg \left( \widetilde{W^\Lambda} / W^\Lambda \right) = 2mn$, the scheme $LQ_e ( \widetilde{W^\Lambda} )$ has property $\cP$. But such a $\widetilde{W^\Lambda}$ is also a Hecke transform of $W$ along $2m+1$ Lagrangian subspaces (including $\Lambda$).

Thus, for $t \ge \max \{2m_0, 2m_1 + 1 \}$, if $\tW$ is the Hecke transform along a general choice of $t$ Lagrangian subspaces, then $LQ_e ( \tW )$ has property $\cP$. \end{proof}

\begin{comment} Some detail: Then $W^\Lambda^$ is an $L(p)$-valued symplectic bundle. By Proposition \ref{prop:irreducible}, there exists $m_1 \ge 0$ such that $LQ_{e-mn} \left( W^\Lambda \right)$ has property $\cP$ for all $m \ge m_1$. Let $D_1$ be a reduced effective divisor of degree $m \ge m_1$. Then $W^\Lambda (D_1)$ is an $L(2D_1 + p)$-valued symplectic bundle, and
\[ LQ_{e-mn} ( W^\Lambda ) \ \cong \ LQ_e ( W^\Lambda ( D_1 ) ) \]
via the map $[ E \to W_1 ] \mapsto [ E(D_1) \mapsto W_0 ( D_1 ) ]$.

Now $W^\Lambda ( D_1 )$ is a symplectic Hecke transformation of $W^\Lambda$. Precisely, $W^\Lambda (D_1)$ is obtained from $W^\Lambda$ by transforming along $m$ pairs of complementary Lagrangian subspaces of $W^\Lambda$, one pair from each point of $\Supp (D_1)$. Clearly such a collection of $2m$ Lagrangian subspaces of $m$ fibers of $W^\Lambda$ can be deformed to a general choice of $2m$ Lagrangian subspaces of distinct fibers of $W^\Lambda$. Therefore, as property $\cP$ is open in families, a general Hecke transformation $\widetilde{W^\Lambda}$ with $\deg \left( \widetilde{W^\Lambda} / W^\Lambda \right) = 2m$ has property $\cP$. But such a general symplectic Hecke transformation of $W^\Lambda$ is also a symplectic Hecke transformation of $W$, along $2m+1$ Lagrangian subspaces. Deforming the original $\Lambda$ if necessary, $\widetilde{W^\Lambda}$ is a general Hecke transformation of $W$ with $\deg \left( \widetilde{W} / W \right) = (2m+1)n$ has property $\cP$. \end{comment}

\begin{cor}\label{coro:represent} Let $W$ be any symplectic bundle over $C$ and $\tW$ be the Hecke transform in (\ref{Heckeseq}). Assume $\LQ$ is not empty. Then, as subschemes of $LQ_e ( \tW )$, we have
$$ \LQ \ = \ \bigcap_{i=1}^t \bbX_{\rho_n}(\Lambda_i^\vee ; q_i) , $$
where we view $\Lambda_i^\vee$ as a Lagrangian subspace of $\tW_{q_i}$. Furthermore, if $\LQ$ has property $\cP$ and $t \gg 0$ and $\tW$ is general, then
\be \label{eq6} [\LQ] \ = \ \prod_{i=1}^t\Qt_{\rho_n}(\cE(q_i)^\vee) \cap [ LQ_e(\tW)] . \ee
\end{cor}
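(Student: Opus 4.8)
The plan is to prove the scheme-theoretic identity first and the cycle-theoretic one afterwards, by two inductions on $t$. For the first identity the case $t=0$ is trivial; for the step, write $\tW=(\tW')^{\Lambda_t}$, where $\tW'$ is the Hecke transform of $W$ along $\Lambda_1,\dots,\Lambda_{t-1}$, so that $\LQ\hookrightarrow LQ_e(\tW)$ factors as $\LQ\hookrightarrow LQ_e(\tW')\xrightarrow{\Psi^{\Lambda_t}}LQ_e(\tW)$. By Proposition~\ref{prop:relation}, $\Psi^{\Lambda_t}$ is a closed immersion onto $\bbX_{\rho_n}(\Lambda_t^\vee;q_t)$, and by the inductive hypothesis the image of $\LQ$ in $LQ_e(\tW')$ is $\bigcap_{i=1}^{t-1}\bbX_{\rho_n}(\Lambda_i^\vee;q_i)$. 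Since $\Psi^{\Lambda_t}$ sends a subsheaf of $\tW'$ to the same subsheaf of $\tW$ (via $\tW'\subseteq\tW$) and $q_t\neq q_i$ for $i<t$, the universal subsheaf and the fibre $\tW_{q_i}$ restrict compatibly, so the vanishing locus $\bbX_{\rho_n}(\Lambda_i^\vee;q_i)$ of $\cE(q_i)\to\tW_{q_i}/\Lambda_i^\vee$ pulls back scheme-theoretically to the corresponding locus in $LQ_e(\tW')$. Hence $\Psi^{\Lambda_t}$ carries $\bigcap_{i=1}^{t-1}\bbX_{\rho_n}(\Lambda_i^\vee;q_i)$ onto $\bigcap_{i=1}^{t}\bbX_{\rho_n}(\Lambda_i^\vee;q_i)$, proving the first identity.

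For the second identity I would induct on $t$ under the additional hypotheses, fixing $t$ large and $(q_i,\Lambda_i)$ general so that $LQ_e(\tW)$ has property~$\cP$ (Lemma~\ref{lemma-heckeP1}); by the same lemma $LQ_e(V_1)$ has property~$\cP$, where $V_1$ is the Hecke transform of $W$ along $\Lambda_2,\dots,\Lambda_t$, this being again a Hecke transform along $\gg0$ general Lagrangians. Put $\bbX:=LQ_e(\tW)$ and $\bbX^{(1)}:=\bbX_{\rho_n}(\Lambda_1^\vee;q_1)=\Psi^{\Lambda_1}(LQ_e(V_1))$. Since $LQ_e(V_1)$ has property~$\cP$, $\bbX^{(1)}$ carries a fundamental class and Corollary~\ref{cor-class}, applied to the single transform $\tW=(V_1)^{\Lambda_1}$, gives $[\bbX^{(1)}]=\Qt_{\rho_n}(\cE(q_1)^\vee)\cap[\bbX]$. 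Inside $\bbX^{(1)}\cong LQ_e(V_1)$ the first identity (for $W\hookrightarrow V_1$) identifies $\LQ$ with $\bigcap_{i=2}^{t}\bbX_{\rho_n}(\Lambda_i^\vee;q_i)$, and the inductive hypothesis for the pair $(W,V_1)$ — valid because $\LQ$ has property~$\cP$ and $V_1$ is a $\gg0$-fold general Hecke transform of $W$ — gives $[\LQ]=\prod_{i=2}^{t}\Qt_{\rho_n}(\cE(q_i)^\vee)\cap[\bbX^{(1)}]$ in $\Chow_*(\bbX^{(1)})$. Pushing this forward along $\bbX^{(1)}\hookrightarrow\bbX$, with the universal subsheaves matching under $\Psi^{\Lambda_1}$ and the projection formula, I obtain $[\LQ]=\prod_{i=2}^{t}\Qt_{\rho_n}(\cE(q_i)^\vee)\cap\bigl(\Qt_{\rho_n}(\cE(q_1)^\vee)\cap[\bbX]\bigr)=\prod_{i=1}^{t}\Qt_{\rho_n}(\cE(q_i)^\vee)\cap[\bbX]$, as required; properness of the intersection is automatic, since by property~$\cP$ one has $\dim LQ_e(\tW)-\dim\LQ=D(n,e,\ell+t)-D(n,e,\ell)=t\,|\rho_n|$.

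The hard part will be the base case of this second induction: the step just described reduces the general case to the one where $t$ equals the threshold of Lemma~\ref{lemma-heckeP1} (the least number of general Hecke transforms forcing property~$\cP$), because for smaller $t$ the intermediate bundle $V_1$, being only a $(t-1)$-fold Hecke transform, need not have property~$\cP$, so Corollary~\ref{cor-class} does not apply and $\bbX^{(1)}$ may fail to be of expected dimension. One must therefore show directly that the class of the proper intersection $\LQ=\bigcap_{i=1}^{t}\bbX_{\rho_n}(\Lambda_i^\vee;q_i)$ in the property-$\cP$ scheme $LQ_e(\tW)$ equals $\prod_{i=1}^{t}\Qt_{\rho_n}(\cE(q_i)^\vee)\cap[LQ_e(\tW)]$, without peeling the factors off one at a time. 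I would attack this either (i) directly, checking that for general $\Lambda_i$ the loci $\bbX_{\rho_n}(\Lambda_i^\vee;q_i)$ are Cohen--Macaulay of codimension $|\rho_n|$ and meet properly inside the generically smooth $LQ_e(\tW)$ — by Kleiman transversality together with the analysis of non-saturated loci from the proof of Proposition~\ref{prop:Chern}, adapted to $\lambda=\rho_n$ — so that each has class $\Qt_{\rho_n}(\cE(q_i)^\vee)\cap[LQ_e(\tW)]$ by Lemma~\ref{Lemma:Chern Class} and the class of the intersection is their product; or (ii) by specialising $\tW$ to $W(D)$ for $D$ a reduced effective divisor of the appropriate degree, using the isomorphism $LQ_e(W(D))\cong LQ_{e'}(W)$ (under which $\LQ\hookrightarrow LQ_e(W(D))$ becomes $E\mapsto E(-D)$) to compute the class in that model, and transporting it to general $\tW$ by a flat family argument, property~$\cP$ being open.
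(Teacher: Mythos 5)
Your handling of the first, scheme--theoretic identity is exactly the paper's argument: iterate Proposition \ref{prop:relation} over the points $q_1,\dots,q_t$. The engine of your inductive step for (\ref{eq6}) --- realize $\bbX_{\rho_n}(\Lambda_1^\vee;q_1)$ as $\Psi^{\Lambda_1}(LQ_e(V_1))$, apply Corollary \ref{cor-class} to the single transform $\tW=(V_1)^{\Lambda_1}$, and then push forward using the projection formula and the compatibility of universal subsheaves --- is also the same mechanism the paper relies on: its (very short) proof combines the first identity with the observation that the image of $\LQ$ has exactly codimension $t|\rho_n|$ and with repeated use of Corollary \ref{cor-class}, without setting up an induction or isolating a base case.

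The genuine gap is the one you name yourself: the base case of your induction, where $t$ equals the threshold of Lemma \ref{lemma-heckeP1} and the intermediate bundles are no longer known to yield Quot schemes with property $\cP$, is not proved, and that is precisely where the content of (\ref{eq6}) lies; a reduction that stops there does not establish the statement. Neither sketched attack closes it as written. For (i): even granting that each $\bbX_{\rho_n}(\Lambda_i^\vee;q_i)$ has pure codimension $|\rho_n|$ with class $\Qt_{\rho_n}(\cE(q_i)^\vee)\cap[LQ_e(\tW)]$ and that the $t$ loci meet properly, it is not a formal consequence that the class of the intersection is the product of the classes: under property $\cP$ the ambient $LQ_e(\tW)$ is only generically smooth, so one needs Cohen--Macaulayness of the relevant partial intersections (the hypothesis in Proposition \ref{Prop:Lagrangian-Loci}) or a refined-intersection argument, together with a verification of conditions (1) and (2) of Lemma \ref{Lemma:Chern Class} at each stage, including control of the nonsaturated strata as in the proof of Proposition \ref{prop:Chern}; none of this is carried out. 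For (ii): $W(D)$ is obtained by transforming along \emph{pairs} of complementary Lagrangians at the same points, so it is not of the form (\ref{Heckeseq}), the image of $\LQ$ in $LQ_e(W(D))\cong LQ_{e-n\deg(D)}(W)$ is cut out by a doubled degeneracy condition at each point of $D$ rather than by the loci appearing in (\ref{eq6}), and transporting a cycle-class identity (rather than an intersection number, as in Lemma \ref{lemma: Ntilde}) from this special fiber to a general $\tW$ requires a flat family of Quot schemes and a specialization argument for the degeneracy classes that you have not supplied. In short, you have proved the first identity and correctly reduced the second to an unproved case; finishing requires actually verifying the hypotheses of Lemma \ref{Lemma:Chern Class} (or Proposition \ref{Prop:Lagrangian-Loci}) at each peeling step, which is the content compressed into the paper's appeal to Corollary \ref{cor-class}.
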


\begin{proof} The first equality follows by applying Proposition \ref{prop:relation} repeatedly. For the rest: By Lemma \ref{lemma-heckeP1}, we may assume that $LQ_e(\tW)$ has property $\cP$. Since $\LQ$ has property $\cP$, its image is of the expected codimension in $LQ_e ( \tW )$. Thus the equality follows from the first equality and Corollary \ref{cor-class}. \end{proof}

\section{Intersection theory on \texorpdfstring{$\LQ$}{LQe(W)}}

We shall now develop an intersection theory on $LQ_e(W)$. Let us give an outline of this section. We define  intersection numbers on $\LQ$ in two ways.

Firstly, as usual, an intersection number is defined as an integral of a cohomology class against the fundamental class $[\LQ]$. With this definition, for reasons which will become clear below, we shall restrict ourselves to those $\LQ$ having property $\cP$. (A motivating example is when $e = e_0$ and $\LQ$ is a finite number of smooth points, as discussed in {\S} \ref{P01}.)

Secondly, for an arbitrary $\LQ$, possibly containing oversized or generically nonreduced components, we embed $\LQ$ into a larger Lagrangian Quot scheme $LQ_e ( \tW )$ having property $\cP$, and then define intersection numbers on $\LQ$ via those on $LQ_e ( \tW )$. We show in Proposition \ref{definitions coincide} that these two definitions coincide when $\LQ$ has property $\cP$. Using this coincidence, we obtain relations among intersection numbers, and this in turn brings to us a relation to the Gromov--Witten invariants of $\lg.$

\subsection{Gromov--Witten invariants} \label{subsection:GW-invariants}

For $1 \le i \le n$, let $\alpha_i$ be a formal variable of weight $i$, and set $\alpha := (\alpha_1, \ldots ,\alpha_n)$. Recall that for a fixed $g$, we have defined
\[
D (n, e, \ell) \ := \ -(n+1)e-\frac{n(n+1)}{2}(g-\ell -1) ,
\]
the expected dimension of $\LQ$ for a symplectic bundle $W$ of rank $2n$ and degree $n \ell$ over a curve of genus $g$. The following should be compared with \cite[Definition 2.7]{Ho}.

\begin{definition} Let $W$ be a symplectic bundle of rank $2n$ and degree $w = n\ell$ over $C$. Suppose that $\LQ$ has property $\cP$. For a weighted homogeneous polynomial $P$, we define
$$
N_{C,e}^w (P ; W) \ := \ \int_{[LQ_e(W)]} P \left( c_1 (\cE(p)^\vee) , \dots ,  c_n (\cE(p)^\vee) \right)
$$
if $\deg \left( P(\alpha) \right) = D(n, e, \ell)$, and $N_{C,e}^w (P ; W) = 0$ otherwise. We call the number $N_{C,e}^w ( P ; W )$ a \emph{Gromov--Witten invariant} of the Lagrangian Grassmann bundle $\LG(W)$ over $C$. \end{definition}

\begin{remark}\label{remark:intersection} By \cite [Proposition 10.2] {Fu}, the number $N_{C,e}^w ( P ; W )$ does not depend on the chosen point $p \in C.$ More generally, we have
$$
N_{C,e}^w (P ; W) \ = \ \int_{[LQ_e(W)]} P \left( c_1 (\cE(p_1)^\vee) , \dots ,  c_n (\cE(p_n)^\vee) \right)
$$
for any $(p_1 , \ldots , p_n) \in C^n$. 
%C \times \cdots \times C$ ($n$-times).
 Thus if $P ( \alpha ) = \prod _{i} \alpha_{k_i}$ is a monomial of weighted degree $D (n, e, \ell)$, then $N_{C,e}^w ( P ; W )$ enumerates the intersection $\bigcap_i \bbX_{k_i}(H^{(i)}_\bullet, p_i)$ for distinct general points $p_i$ and general flags $H_\bullet ^{(i)}$ in $W_{p_i}$. In particular, $N_{C,e}^w \left( \prod _{i} \alpha_{k_i} ; W \right)$ is a nonnegative integer.
\end{remark}

\noindent Let us show that $N^w_{C, e} ( P ; W )$ is a deformation invariant in families of Lagrangian Quot schemes with property $\cP$.

\begin{prop}\label{prop: not depend on b} Let $\cC \to B$ be a family of smooth projective curves over an irreducible curve $B$ and $\mathscr{L} \to \cC$ a line bundle of relative degree $\ell$. Let $\mathscr{W}$ be a vector bundle over $\mathcal C$ such that $\mathscr{W}|_b$ is an $\mathscr{L}|_b$-valued symplectic bundle over $\cC_b$ for each $b \in B$. Suppose that $LQ_e \left( \mathscr{W}|_b \right)$ has property $\cP$ for each $b \in B$. Then $N_{\cC_b,e}^w \left( P ; \mathscr{W}|_b \right)$ is independent of $b \in B$. \end{prop}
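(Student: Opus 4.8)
The plan is to show that the integer-valued function $b \mapsto N_{\cC_b,e}^w(P;\scWb)$ is locally constant on $B$, which suffices since $B$ is irreducible. First I would set up the relative Lagrangian Quot scheme $\pi \colon LQ_e(\scW/B) \to B$ parametrizing, over each $b$, the Lagrangian subsheaves of $\scWb$ of degree $e$. This is constructed exactly as the absolute $\LQ$ was: it is a closed subscheme of the relative Grothendieck Quot scheme $\Quot_{n,e}(\scW/B)$ cut out by the vanishing of the section $\sigma$ induced by the relative symplectic form $\scW \otimes \scW \to \scL$, so it is projective over $B$, and its fiber over $b$ is $LQ_e(\scWb)$. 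On the total space of $LQ_e(\scW/B) \times_B \cC$ there is a relative universal exact sequence $0 \to \cE \to \mathrm{pr}^*\scW \to \cQ \to 0$, and restricting to the fiber over $b$ recovers the universal sheaf on $LQ_e(\scWb) \times \cC_b$; thus the Chern classes $c_k(\cE(p)^\vee)$ vary in a flat family.

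Next, the key point is that the hypothesis ``$LQ_e(\scWb)$ has property $\cP$ for each $b$'' forces $\pi$ to be flat of relative dimension $D(n,e,\ell)$ with fibers of pure dimension $D(n,e,\ell)$. Indeed, property $\cP$ says each fiber is generically smooth of the expected dimension $D(n,e,\ell)$, and in particular of pure dimension $D(n,e,\ell)$; since $B$ is a smooth curve and all fibers of the projective morphism $\pi$ have the same dimension, $\pi$ is flat (a morphism to a smooth curve with equidimensional fibers is flat, e.g.\ by \cite[III, Exercise 9.8]{Hartshorne}-type reasoning, or miracle flatness applied componentwise). Hence the cycle classes $[LQ_e(\scWb)] \in \Chow_{D(n,e,\ell)}(LQ_e(\scW/B)_b)$ fit together: there is a well-defined relative fundamental class, and for the flat family $\pi$ the specialization/rational-equivalence machinery of \cite[\S10]{Fu} (conservation of number, or the constancy of intersection numbers in flat families) applies.

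Then I would conclude as follows. Choose a section $p \colon B \to \cC$ (after possibly shrinking $B$ to an open subset, or passing to an \'etale cover, which is harmless since local constancy on a dense open of an irreducible curve gives global constancy on the open locus, and one then runs the argument near any point). The class $P(c_1(\cE(p(b))^\vee),\dots,c_n(\cE(p(b))^\vee)) \cap [LQ_e(\scWb)]$ is the restriction to the fiber of a class $\Theta := P(c_1(\cE(p)^\vee),\dots) \cap [LQ_e(\scW/B)]$ on the total space, which is $D(n,e,\ell) - \deg P(\alpha)$-dimensional. When $\deg P(\alpha) = D(n,e,\ell)$ this is a relative $0$-cycle, flat and proper over $B$, so its degree over each point of $B$ is the same integer by \cite[Corollary 10.2 and Proposition 10.2]{Fu} (constancy of the degree of a proper flat family of zero-cycles on a connected base); when $\deg P(\alpha) \ne D(n,e,\ell)$ both sides are zero by definition. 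This gives $N_{\cC_b,e}^w(P;\scWb) = \deg(\Theta \cdot [\text{fiber}])$ independent of $b$.

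The main obstacle I expect is verifying that $\pi \colon LQ_e(\scW/B) \to B$ is flat, i.e.\ that property $\cP$ on each fiber genuinely propagates to flatness of the family rather than merely equidimensionality of fibers — one must be a little careful because $\LQ$ can be non-reduced or reducible even under property $\cP$ (property $\cP$ only asks that \emph{each component} be generically smooth of the expected dimension), so ``equidimensional fibers over a smooth curve $\Rightarrow$ flat'' must be invoked at the level of the support, and one needs the fundamental cycle $[LQ_e(\scWb)]$ (with multiplicities) to be the specialization of the relative fundamental cycle. This is exactly the kind of statement handled by Fulton's specialization of cycles to the fiber of a flat family (\cite[\S10, \S1.5]{Fu}); invoking it for $[LQ_e(\scW/B)]$ — whose components are equidimensional over $B$ by the fiber-dimension hypothesis — disposes of the multiplicity bookkeeping. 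Once flatness and the specialization of the fundamental cycle are in hand, the rest is the standard conservation-of-number argument.
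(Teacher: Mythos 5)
Your overall architecture (relative Lagrangian Quot scheme over $B$, flatness of the projection, then conservation of number for a proper family of zero-cycles) is the same as the paper's, and the last step is fine: once flatness is known, your appeal to Fulton's conservation of number plays the role of the paper's citation of \cite[Lemma 1.6]{Be}. The gap is in the middle step, and it is a genuine one: the assertion that ``a projective morphism to a smooth curve with equidimensional fibers is flat'' is false, and property $\cP$ on the fibers does not rescue it in the way you suggest. Miracle flatness needs the total space to be Cohen--Macaulay (or the morphism to be l.c.i.), which is exactly what has not been established and is the real content of the step. A toy example shows both your flatness claim and your fallback via specialization of cycles fail in general: take $X = \{x=0\} \cup \{(1,0)\} \subset \mathbb{A}^1_x \times B$ over $B=\mathbb{A}^1_t$. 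Every fiber is zero-dimensional and smooth (so ``generically smooth of the expected dimension'' holds fiberwise), yet $X \to B$ is not flat, the component $\{(1,0)\}$ is contained in a single fiber and is invisible to the Gysin specialization of the relative fundamental cycle, and the count of points jumps from $1$ to $2$ at $t=0$. So equidimensionality of fibers does not exclude components (or embedded associated points) of the relative Quot scheme supported over a single $b_0\in B$, and for such a family $[LQ_e(\scWb[0])]$ need not be the specialization of the relative fundamental cycle; your remark that the specialization machinery ``disposes of the multiplicity bookkeeping'' is precisely the point that is not justified.

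What closes the gap is the local structure of the (Lagrangian) Quot scheme, which is how the paper argues: the relative Quot scheme is, locally, cut out in a smooth ambient scheme by the expected number of equations (this is \cite[Theorem 5.17]{Ko}), and the Lagrangian condition is imposed by the vanishing of a section of a vector bundle of the complementary rank, as in the construction of $\LQ$ in \S 3. Hence, when every fiber has the expected dimension $D(n,e,\ell)$, the morphism $\phi\colon LQ_e(\scW)\to B$ is a local complete intersection morphism; in particular the relevant local rings are Cohen--Macaulay, fiber-supported and embedded components as in the example above cannot occur, and flatness follows (now miracle flatness is legitimate). With flatness in hand, the identification of $[LQ_e(\scWb)]$ with the specialization of the relative cycle and the constancy of the degree go through exactly as you describe. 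So to repair your proof you should replace the ``equidimensional fibers over a smooth curve $\Rightarrow$ flat'' step by this equation-counting/l.c.i. argument (or an equivalent Cohen--Macaulayness statement for $LQ_e(\scW)$); without some such input the proof does not stand.
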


\begin{proof} The family $\mathscr{W} \to \cC$ gives rise to a family $\phi \colon LQ_e ( \mathscr{W} ) \to B$ of Lagrangian Quot schemes parametrized by $B$. Since by the hypothesis each fiber $LQ_e \left( \mathscr{W}|_b \right)$ is generically smooth of the expected dimension, as in the case of Quot schemes in \cite[Theorem 5.17]{Ko} the map $\phi$ is a local complete intersection morphism, and in particular flat. Thus the proposition follows from \cite[Lemma 1.6]{Be}. \end{proof}

\subsection{Definition of Gromov--Witten numbers on an arbitrary Lagrangian Quot scheme}

We shall now extend our definition of intersection number to arbitrary Lagrangian Quot schemes, not necessarily enjoying property $\cP$.

Let $W$ be any symplectic bundle of degree $w$ over $C$. Let $W^\Lambda$ be the symplectic bundle obtained by the Hecke transform (\ref{heckeseq}) associated to a general $\Lambda \in \LG(W_p)$.

\begin{lemma} \label{Prop:Hecke-GW} If both $\bbX = LQ_e (W^\Lambda)$ and $\LQ$ have property $\cP$, then
$$ N_{C,e}^w ( P(\alpha) ; W ) \ = \ N_{C,e}^{w+n} \left( \Qt_{\rho_n}(\alpha)P(\alpha) ; W^\Lambda \right) . $$
\end{lemma}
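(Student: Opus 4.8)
The plan is to compute $N^w_{C,e}(P;W)$ by pushing the computation forward along the embedding $\Psi^\Lambda \colon \LQ \hookrightarrow LQ_e(W^\Lambda)$ and using the projection formula. First I would note that, since $\Psi^\Lambda$ is a closed embedding (stated in \S 4.3) and since by Proposition \ref{prop:relation} its image is exactly the Lagrangian degeneracy locus $\bbX_{\rho_n}(\Lambda^\vee;p)$ inside $\bbX = LQ_e(W^\Lambda)$, Corollary \ref{cor-class} applies: because both $\LQ$ and $LQ_e(W^\Lambda)$ have property $\cP$ and $\Lambda$ is general, we get the class identity
\[ \left[ \Psi^\Lambda(\LQ) \right] \ = \ \left[ \bbX_{\rho_n}(\Lambda^\vee;p) \right] \ = \ \Qt_{\rho_n}\!\left( \cE(p)^\vee \right) \cap [\bbX] \]
in $\Chow_*(LQ_e(W^\Lambda))$, where now $\cE$ is the universal subsheaf on $LQ_e(W^\Lambda)$.

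Next I would identify the restriction of the universal data under $\Psi^\Lambda$. The key compatibility is that the universal subsheaf $\cE_W$ on $\LQ \times C$ and the universal subsheaf $\cE_{W^\Lambda}$ on $LQ_e(W^\Lambda) \times C$ agree after pulling back by $\Psi^\Lambda \times \mathrm{id}_C$: this is because for each Lagrangian subsheaf $E$, the inclusions $E \subset W \subset W^\Lambda$ present the \emph{same} sheaf $E$ as a point of both Quot schemes. Hence for a point $q \in C$ (I may as well take the same auxiliary point $p_1 = \cdots = p_n$ used to evaluate the invariant, which is legitimate by Remark \ref{remark:intersection}, choosing it away from $p$), the Chern classes $c_k(\cE_W(q)^\vee)$ on $\LQ$ are the pullbacks under $\Psi^\Lambda$ of the classes $c_k(\cE_{W^\Lambda}(q)^\vee)$ on $LQ_e(W^\Lambda)$. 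Therefore
\[ N^w_{C,e}(P;W) \ = \ \int_{[\LQ]} P\!\left( c_\bullet(\cE_W(q)^\vee) \right) \ = \ \int_{[\LQ]} (\Psi^\Lambda)^*\, P\!\left( c_\bullet(\cE_{W^\Lambda}(q)^\vee) \right). \]
By the projection formula for the proper map $\Psi^\Lambda$, this equals $\int_{LQ_e(W^\Lambda)} P\!\left( c_\bullet(\cE_{W^\Lambda}(q)^\vee)\right) \cap \Psi^\Lambda_*[\LQ]$, and substituting the class identity above gives
\[ N^w_{C,e}(P;W) \ = \ \int_{[LQ_e(W^\Lambda)]} \Qt_{\rho_n}\!\left(\cE_{W^\Lambda}(p)^\vee\right)\, P\!\left(c_\bullet(\cE_{W^\Lambda}(q)^\vee)\right). \]
Finally, since $c_k(\cE_{W^\Lambda}(q)^\vee) = \Qt_{(k)}(\cE_{W^\Lambda}(q)^\vee)$, the right-hand side is by definition $N^{w+n}_{C,e}\!\left(\Qt_{\rho_n}(\alpha)P(\alpha); W^\Lambda\right)$ (noting $\deg W^\Lambda = w + n$ and that the degree bookkeeping matches: $\deg(\Qt_{\rho_n}P) = |\rho_n| + D(n,e,\ell) = D(n,e,\ell+1)$, so both sides vanish simultaneously when $P$ has the wrong degree).

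The main obstacle I expect is justifying cleanly that the two definitions of the invariant are compatible under the change of variety, i.e. that $\Psi^\Lambda_*[\LQ] = [\Psi^\Lambda(\LQ)]$ as cycles (not merely set-theoretic agreement of supports) — this requires that $\Psi^\Lambda$ is an isomorphism onto its scheme-theoretic image with reduced structure, which should follow from $\LQ$ having property $\cP$ together with the fact, already used in Corollary \ref{cor-class}, that a general point of each component of $\LQ$ maps into the saturated locus $LQ^\circ_e(W^\Lambda)$ where $\Psi^\Lambda$ is visibly a local isomorphism. A secondary point to be careful about is that the point $p$ where the Hecke transform is performed and the auxiliary points $q$ used to evaluate the special Schubert classes must be chosen so that all the genericity hypotheses of Corollary \ref{cor-class} and Remark \ref{remark:intersection} hold simultaneously; this is harmless since each condition is open and nonempty.
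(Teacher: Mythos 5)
Your proposal is correct and follows essentially the same route as the paper: the class identity $[\LQ] = \Qt_{\rho_n}(\cE(p)^\vee)\cap[LQ_e(W^\Lambda)]$ from Corollary \ref{cor-class} combined with the projection formula along the embedding $\Psi^\Lambda$, which is exactly the paper's (much terser) argument. Your extra care about the compatibility of universal subsheaves, the choice of auxiliary evaluation points, and the degree bookkeeping $D(n,e,\ell)+|\rho_n|=D(n,e,\ell+1)$ just makes explicit what the paper leaves implicit.
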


\begin{proof} We may assume that $\deg P(\alpha) = D (n, e, \ell )$, as both sides are zero otherwise. Recall from Corollary \ref{cor-class} that
\[ \left[ \LQ \right] \ = \ \Qt_{\rho_n}(\cE(p)^\vee) \cap \left[ \bbX \right] \]
in $\Chow_* \left( \bbX \right)$. Now the statement follows from the projection formula. \end{proof}

Motivated by the lemma, we make a definition.

\begin{definition} Let $W$ be a symplectic bundle of degree $w$, and suppose $\LQ$ is nonempty. For $t \gg 0$, let $\tW$ be a general symplectic Hecke transform of $W$ with $\deg (\tW / W) = tn$, so that $LQ_e(\tW)$ has property $\cP$ by Lemma \ref{lemma-heckeP1}. We define
$$ \widetilde{N}_{C,e}^w ( P(\alpha) ; W ) \: := \: N_{C, e}^{w + tn} \left( P(\alpha) \Qt_{\rho_n}^t ( \alpha ) ; \tW \right) . $$
\end{definition}

\begin{lemma} \label{lemma: Ntilde} The number $\tN_{C,e}^w ( P(\alpha) ; W )$ is well-defined and depends only on $g$, $e$ and $w$ once the polynomial $P(\alpha)$ is specified. More precisely,
\bn
\item It does not depend on the chosen Hecke transform $\tW$.
\item Let $\mathscr{W} \to \cC \to B$ be a family of symplectic bundles parametrized by a connected curve $B$, such that $LQ_e \left( \mathscr{W}|_b \right)$ is nonempty for all $b \in B$. Then $\tN_{\cC_b , e}^w \left( P(\alpha); \mathscr{W}|_b \right)$ is constant with respect to $b \in B$. (In particular, it is invariant even for not necessarily flat families of Lagrangian Quot schemes.)
\en
\end{lemma}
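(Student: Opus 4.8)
The plan is to establish (1) and (2) together, since (1) is essentially the special case of (2) where the two bundles $\tW$ and $\tW'$ are put into a single family. First I would prove (1) directly. Let $\tW$ and $\tW'$ be two general symplectic Hecke transforms of $W$ with $\deg(\tW/W) = tn$ and $\deg(\tW'/W) = t'n$, both large enough that $LQ_e(\tW)$ and $LQ_e(\tW')$ have property $\cP$ (using Lemma \ref{lemma-heckeP1}). The key observation is that a further general Hecke transform of $\tW$ along $t'$ more Lagrangian subspaces, and a further general Hecke transform of $\tW'$ along $t$ more Lagrangian subspaces, can both be arranged to produce the \emph{same} symplectic bundle $\widehat{W}$: indeed each is a Hecke transform of $W$ along $t+t'$ general Lagrangian subspaces, so by genericity they may be taken isomorphic. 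Here I should be slightly careful: the Hecke data for $\tW\to\widehat W$ consists of Lagrangian subspaces of fibers of $\tW$, not of $W$, but since these can be chosen freely and generically, the composite is a general Hecke transform of $W$ of total degree $(t+t')n$, and hence can be taken equal to a fixed $\widehat W$ with $LQ_e(\widehat W)$ having property $\cP$ (enlarging $t, t'$ if necessary).

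Granting this, I apply Lemma \ref{Prop:Hecke-GW} repeatedly. Starting from $\tW$ and performing the $t'$ additional Hecke transforms one at a time, each step multiplies the polynomial by $\Qt_{\rho_n}(\alpha)$ and lands in the next Quot scheme (each intermediate one having property $\cP$, which I can guarantee by choosing the Hecke subspaces generically and invoking openness of $\cP$). This yields
\[ N_{C,e}^{w+tn}\!\left(P(\alpha)\Qt_{\rho_n}^t(\alpha);\tW\right) \ = \ N_{C,e}^{w+(t+t')n}\!\left(P(\alpha)\Qt_{\rho_n}^{t+t'}(\alpha);\widehat W\right). \]
Running the same argument from $\tW'$ gives the identical right-hand side, so the two candidate values of $\tN_{C,e}^w(P(\alpha);W)$ agree. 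This proves well-definedness; note it also shows $\tN$ only depends on the isomorphism class of $W$ and the numerical data, since $\widehat W$ was constructed from $W$ alone.

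For (2), let $\mathscr{W}\to\cC\to B$ be the given family with $LQ_e(\mathscr{W}|_b)$ nonempty for all $b$. The strategy is to produce, after possibly passing to an étale or finite cover of $B$ (or working locally on $B$, which suffices since $B$ is a connected curve and the statement is about constancy), a family of general symplectic Hecke transforms $\widetilde{\mathscr{W}}\to\cC$ with $\deg(\widetilde{\mathscr{W}}|_b/\mathscr{W}|_b) = tn$ for a uniform $t\gg0$, such that $LQ_e(\widetilde{\mathscr{W}}|_b)$ has property $\cP$ for every $b$. Concretely, choose $t$ sections of $\cC\to B$ (after a base change so that enough sections exist) giving $t$ points in each fiber, and over the resulting incidence variety choose a family of Lagrangian subspaces of the fibers of $\mathscr{W}$ at those points; performing the Hecke transform in families produces $\widetilde{\mathscr{W}}$. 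By Lemma \ref{lemma-heckeP1} applied fiberwise together with openness of property $\cP$ in families, for a general such choice and $t$ large, $LQ_e(\widetilde{\mathscr{W}}|_b)$ has property $\cP$ for all $b$ in a dense open subset, and by shrinking $B$ (still connected) we may assume this holds everywhere. Then by definition $\tN_{\cC_b,e}^w(P(\alpha);\mathscr{W}|_b) = N_{\cC_b,e}^{w+tn}(P(\alpha)\Qt_{\rho_n}^t(\alpha);\widetilde{\mathscr{W}}|_b)$, and the right-hand side is independent of $b$ by Proposition \ref{prop: not depend on b} applied to the family $\widetilde{\mathscr{W}}$. Finally, invariance under the base change and the shrinking of $B$ is harmless: a finite cover of a connected curve has connected components each surjecting onto $B$, and on each the invariant is constant, hence constant on $B$.

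The main obstacle I expect is the bookkeeping in (2) around constructing the family of Hecke transforms uniformly over $B$: one needs enough sections of $\cC\to B$ to specify the $t$ Hecke points simultaneously for all fibers, which may force a base change, and one must check that property $\cP$ can be achieved \emph{simultaneously} for all $b$ rather than merely for general $b$ — this is where the connectedness of $B$ and the openness of $\cP$ do the work, together with the freedom to take $t$ as large as needed. A secondary subtlety, already flagged in the proof of (1), is that Hecke transforms of $\tW$ are taken along subspaces of fibers of $\tW$, not of $W$; one must argue that a composite of generic Hecke transforms is again a generic Hecke transform of $W$ of the appropriate total degree, so that Lemma \ref{lemma-heckeP1} still applies to the composite.
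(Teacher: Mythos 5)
Your part (1) is essentially the paper's argument: build a common further Hecke transform of the two given ones and compare by applying Lemma \ref{Prop:Hecke-GW} one step at a time through intermediate transforms that all have property $\cP$. The subtlety you flag resolves exactly as in the paper: since the two transforms are supported at disjoint sets of points, the fibers of $\tW_1$ at the points $q_i$ equal $W_{q_i}$, so you may take the extra Hecke data for $\tW_1$ to be literally the data defining $\tW_2$ (and vice versa), producing one and the same bundle $\widetilde{W_3}$; no appeal to ``two general Hecke transforms are isomorphic'' is needed.

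Part (2), as written, has a genuine gap. You construct a single family of Hecke transforms of uniform degree $tn$ over (a cover of) $B$, obtain property $\cP$ only for $b$ in a dense open subset, and then ``shrink $B$''. But the statement asserts constancy of $\tN^w_{\cC_b,e}$ at \emph{every} $b\in B$; shrinking discards precisely the finitely many special points where the claim remains unproved, and nothing in your argument compares the value at such a point with the value at nearby points. In addition, the uniform $t$ is not justified: the threshold in Lemma \ref{lemma-heckeP1} comes from $e(W)$ in Proposition \ref{prop:irreducible} and depends on the bundle, so a special fiber $\scWb$ may require a larger $t$ (and more special Hecke data) than a choice made generically in the family. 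The paper's remedy, which your part (1) already enables, is to re-center at each point: for every $b_0\in B$ apply Lemma \ref{lemma-heckeP1} to $\scWbo$ itself to get a Hecke transform of some degree $t_{b_0}n$ with property $\cP$, use openness of $\cP$ to spread this to a family over a neighborhood $U_{b_0}$ (so property $\cP$ holds at $b_0$ and on all of $U_{b_0}$), conclude constancy on $U_{b_0}$ from Proposition \ref{prop: not depend on b}, and then join any two points of $B$ by a finite chain of such opens, using part (1) on overlaps to reconcile the possibly different degrees $t_j$. This local construction also removes the need for a global base change (sections of $\cC\to B$ and families of Lagrangian subspaces exist locally on $B$), and note that your surjectivity claim for components of a finite cover uses irreducibility of $B$, whereas the lemma only assumes $B$ connected; the chain-of-opens argument handles that case as well.
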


\begin{proof} (1) Choose two different general Hecke transforms $\widetilde{W_1}$ and $\widetilde{W_2}$ of $W$. We may assume that the Hecke transforms are obtained at distinct points $p_1, \dots, p_{t_1}$ and $q_1, \dots, q_{t_2}$, respectively. We can take a Hecke transform of $\widetilde{W_1}$ at appropriate Lagrangian subspaces of $(\widetilde{W_1})_{q_i} = W_{q_i}$ for $1 \le i \le t_2$, and also a Hecke transform of $\widetilde{W_2}$ at suitable Lagrangian subspaces of $(\tW_2)_{p_j} = W_{p_j}$ for $1 \le j \le t_1$ to obtain a symplectic bundle $\widetilde{W_3}$ which is a common Hecke transform of $\tW_1$ and $\tW_2$. By generality of the choices made, we may assume for $i \in \{ 1 , 2 \}$ that the Lagrangian Quot schemes of all the intermediate Hecke transforms between $\tW_i$ and $\tW_3$ also have property $\cP$. The desired equality
$$
N_{C,e}^{w + t_1 n}( P(\alpha) \Qt_{\rho_n}^{t_1}(\alpha) ; \tW_1) \: = \: N_{C,e}^{w + t_2 n}( P(\alpha) \Qt_{\rho_n}^{t_2}(\alpha);\tW_2)
$$
is obtained by applying Lemma \ref{Prop:Hecke-GW} successively, starting from the common Hecke transform $\tW_3$.

(2) For a given $b_0 \in B$, by Lemma \ref{lemma-heckeP1}, there exists $t \gg 0$ such that if $\widetilde{\scWbo}$ is a general Hecke transform along $t$ Lagrangian subspaces of $\scWbo$, then $LQ_e \left( \widetilde{\scWbo} \right)$ has property $\cP$. By openness of property $\cP$, there exists an open subset $U$ of the component of $B$ containing $b_0$, such that for each $b \in U$ and for a general symplectic Hecke transformation of $\scWb$ of degree $w + tn$, the scheme $LQ_e \left( \widetilde{\scWb} \right)$ has property $\cP$.

Thus, shrinking $U$ if necessary, we may choose a family of degree $w + tn$ Hecke transforms $\widetilde{\scW|_{U}} \to \cC|_{U} \to U$, all having property $\cP$. By Proposition \ref{prop: not depend on b}, we see that
\be \label{Ntilde constant} N^{w+tn}_{\cC_b , e} \left( P(\alpha) \Qt_{\rho_n}^t (\alpha) ; \widetilde{\scWb} \right) \hbox{ is constant with respect to } b \in U . \ee

Now let $b^\prime$ be any other point of $B$. As each component of $B$ is a quasi-projective curve, we can find a finite connected chain of open subsets $U = U_0 , U_1 , \ldots , U_\nu$ of components of $B$ with $b_0 \in U_0$ and $b^\prime \in U_\nu$, equipped with families of Hecke transforms
\[ \widetilde{\scW_j} \ \to \ \cC|_{U_j} \ \to \ U_j \]
of $\scW|_{U_j}$ of degree $w + t_j n$ as above such that $LQ_e \left( \widetilde{\scW_j}|_b \right)$ has property $\cP$ for each $b \in U_j$. Now the the numbers $t_j$ may be different, but for $b \in U_j \cap U_k$, by part (1) we have equality
\[ N^{w + t_j n}_{\cC_b , e} \left( P(\alpha) \cdot \Qt_{\rho_n}^{t_j}(\alpha) ; \widetilde{{\mathscr W}_j}|_b \right) \ = \ N^{w + t_k n}_{\cC_b , e} \left( P(\alpha) \cdot \Qt_{\rho_n}^{t_k}(\alpha) ; \widetilde{{\mathscr W}_k}|_b \right) . \]
	
By definition of $\tN^w_{C, e} ( P; W )$ and by (\ref{Ntilde constant}) it follows that $\tN^w_{\cC_b, e} ( P(\alpha) ; \scWb )$ is constant with respect to $b \in B$. \end{proof}

If $\LQ$ has property $\cP$, then in computing $\widetilde{N}_{C,e}^w ( P (\alpha) ; W )$ we can take $\tW = W$. Thus we obtain:

\begin{prop} \label{definitions coincide} Let $W$ be any symplectic bundle of degree $w$ such that $\LQ$ has property $\cP.$ Then we have
$$ \widetilde{N}_{C,e}^w ( P (\alpha) ; W ) \ = \ N_{C,e}^w ( P (\alpha) ; W ) . $$
In particular, the two definitions of intersection number coincide. \end{prop}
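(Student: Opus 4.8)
The plan is to evaluate both sides on a single, sufficiently large Lagrangian Quot scheme, following the argument behind Lemma~\ref{Prop:Hecke-GW} but with the multi-step identity of Corollary~\ref{coro:represent} replacing the one-step identity used there. As a preliminary reduction, I would assume $\deg P(\alpha)=D(n,e,\ell)$, since otherwise both $N_{C,e}^w(P;W)$ and $\tN_{C,e}^w(P;W)$ vanish by definition.

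Next I would fix the Hecke datum defining $\tN_{C,e}^w(P;W)$. Since this number is well-defined by Lemma~\ref{lemma: Ntilde}, I am free to take $t\gg 0$, distinct \emph{general} points $q_1,\dots,q_t\in C$, and general Lagrangian subspaces $\Lambda_i\subset W_{q_i}$, and to let $\tW$ be the resulting symplectic Hecke transform, fitting into~(\ref{Heckeseq}); by Lemma~\ref{lemma-heckeP1} I may assume $LQ_e(\tW)$ has property $\cP$, so that $\tN_{C,e}^w(P;W)=N_{C,e}^{w+tn}\big(P\,\Qt_{\rho_n}^t;\tW\big)$. Let $\Psi\colon\LQ\hookrightarrow LQ_e(\tW)$ be the closed embedding induced by $W\subset\tW$, and let $\cE$ denote the universal subsheaf on $LQ_e(\tW)\times C$. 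Since $\Psi$ sends $[E\to W]$ to $[E\to\tW]$, functoriality of the Quot construction identifies the universal subsheaf on $\LQ\times C$ with $(\Psi\times\mathrm{id}_C)^*\cE$; hence the Chern classes of the dual universal subsheaf on $\LQ$ at a point $p$ are the $\Psi$-pullbacks of $c_1(\cE(p)^\vee),\dots,c_n(\cE(p)^\vee)$.

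I would then invoke Corollary~\ref{coro:represent}: as $\LQ$ has property $\cP$, $\tW$ is a general Hecke transform, and $t\gg 0$, we have $\Psi_*[\LQ]=\prod_{i=1}^t\Qt_{\rho_n}(\cE(q_i)^\vee)\cap[LQ_e(\tW)]$ in $\Chow_*(LQ_e(\tW))$. Feeding this, together with the compatibility of universal subsheaves, into the projection formula gives
\[ N_{C,e}^w(P;W)\ =\ \int_{[\LQ]}\Psi^*\,P\big(c_1(\cE(p)^\vee),\dots,c_n(\cE(p)^\vee)\big)\ =\ \int_{[LQ_e(\tW)]}P\big(c_1(\cE(p)^\vee),\dots,c_n(\cE(p)^\vee)\big)\cdot\prod_{i=1}^t\Qt_{\rho_n}(\cE(q_i)^\vee). \]
Since $c_k(\cE(p)^\vee)$ is independent of $p$ in $\Chow^*(LQ_e(\tW))$ --- the fact underlying Remark~\ref{remark:intersection}, from \cite[Proposition~10.2]{Fu} --- the factor $P$ and all the factors $\Qt_{\rho_n}$ on the right may be evaluated at a single common point, so the right-hand side equals $N_{C,e}^{w+tn}\big(P\,\Qt_{\rho_n}^t;\tW\big)=\tN_{C,e}^w(P;W)$, as required.

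The delicate points are the two compatibility statements: (i) that the universal subsheaf on $\LQ$ is genuinely the pullback along $\Psi\times\mathrm{id}_C$ of the one on $LQ_e(\tW)$ --- immediate from the moduli description of $\Psi$, but worth recording --- and (ii) that independence of the point legitimately lets the factors of $P\,\Qt_{\rho_n}^t$ be distributed over the distinct points $p,q_1,\dots,q_t$ and then recombined at one point. Neither is a genuine obstacle; the substantive ingredient is Corollary~\ref{coro:represent}. I would route the proof through that corollary rather than try to extend the well-definedness argument of Lemma~\ref{lemma: Ntilde} so as literally to take $\tW=W$, since the latter would need property $\cP$ for the intermediate low-degree Hecke transforms of $W$, which is not available.
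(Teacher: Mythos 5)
Your argument is correct, but it takes a different route from the paper's. The paper disposes of this proposition in one line: since $\LQ$ has property $\cP$, the bundle $W$ itself is an admissible choice in the definition of $\tN^w_{C,e}(P;W)$ (i.e.\ one takes $t=0$), and the independence of that choice, Lemma \ref{lemma: Ntilde}(1) --- itself proved by iterating the one-step comparison of Lemma \ref{Prop:Hecke-GW} through a chain of intermediate Hecke transforms --- immediately gives $\tN^w_{C,e}(P;W)=N^w_{C,e}(P;W)$. You instead keep $t\gg 0$ fixed and compare the two integrals directly: you identify the universal subsheaf on $\LQ$ with the pullback of the one on $LQ_e(\tW)$, push forward by the projection formula, substitute the class identity $[\LQ]=\prod_{i}\Qt_{\rho_n}(\cE(q_i)^\vee)\cap[LQ_e(\tW)]$ of Corollary \ref{coro:represent}, and recombine the factors at a single point using the independence-of-point argument of Remark \ref{remark:intersection}; in effect you apply the $t$-fold analogue of Lemma \ref{Prop:Hecke-GW} in one step rather than appealing to well-definedness. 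What your route buys is exactly what you flag at the end: it needs property $\cP$ only for $\LQ$ (the hypothesis) and for $LQ_e(\tW)$ (Lemma \ref{lemma-heckeP1}), not for intermediate low-degree transforms, whereas the paper's reduction via Lemma \ref{lemma: Ntilde}(1) with $\tW_1=W$ rests on the generality assertion there that all intermediate Lagrangian Quot schemes have property $\cP$. The cost is that you lean on Corollary \ref{coro:represent}, whose proof is itself an iterated use of Corollary \ref{cor-class}, so the two arguments ultimately draw on the same circle of ideas; and the two compatibility points you isolate (pullback of the universal subsheaf under the embedding, and distributing the factors of $P\,\Qt_{\rho_n}^t$ over distinct points) are handled in the same way elsewhere in the paper (Remark \ref{remark:intersection}, Proposition \ref{Prop-Open part}), so they pose no difficulty.
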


%\begin{proof} {\color{red}Choose a Hecke transform $W \subset \tW$ so that $LQ_e(\tW)$ has property $\cP$ as in Corollary \ref{coro:represent},  so that the scheme $\LQ$, embedded as a subscheme of $LQ_e(\tW)$, represents $\prod_{i=1}^t\Qt_{\rho_n}(\cE(q_i)^\vee)$.} Then the equality follows from the definition of $\widetilde{N}_{C,e}^w ( P (\alpha) ; W )$  and the projection formula. \end{proof}

\noindent We shall shortly see that if $\LQ$ has property $\cP$, then $N^w_{C, e} ( P;W )$ enumerates Lagrangian subbundles of $W$ satisfying a certain condition.

\subsection{Relations between intersection numbers}

Here we study a behavior of the numbers $\widetilde{N}^w_{C, e} ( P ; W )$ under various transformations. Let $W$ be an $L$-valued symplectic bundle of degree $w$ over $C$. Let $\widehat{L}$ be a line bundle of degree $\hat{\ell}$ over $C$. Then $W \otimes \widehat{L}$ is an $L \otimes \widehat{L}^2$-valued symplectic bundle of degree $w + 2n \hat{\ell}$.

\begin{prop} \label{Prop:tranform-GW} Let $W$ and $\widehat{L}$ be as above. Then
$$ \widetilde{N}_{C,e}^{w} ( P (\alpha) ; W ) \ = \ \widetilde{N}_{C,e + n \hat{\ell}}^{w + 2n\hat{\ell}}( P(\alpha) ; W \otimes \widehat{L}) . $$
\end{prop}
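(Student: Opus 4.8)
The plan is to exploit the evident isomorphism of Lagrangian Quot schemes induced by tensoring with $\widehat{L}$, together with its compatibility with Hecke transforms and with the tautological Chern classes, and then reduce the general statement to the property-$\cP$ case via the definition of $\widetilde{N}$. First I would record the following. For any symplectic bundle $V$ over $C$, say $M$-valued, and any line bundle $\widehat{L}$ of degree $\hat{\ell}$, the assignment $[E\to V]\mapsto[E\otimes\widehat{L}\to V\otimes\widehat{L}]$ is an isomorphism of schemes $\tau\colon LQ_e(V)\to LQ_{e+n\hat{\ell}}(V\otimes\widehat{L})$: it is clearly bijective, and tensoring the universal exact sequence on $LQ_e(V)\times C$ by $\pi_C^*\widehat{L}$ realizes, by the universal property of Quot schemes, the pullback under $\tau\times\mathrm{id}$ of the universal subsheaf on $LQ_{e+n\hat{\ell}}(V\otimes\widehat{L})\times C$ as $\cE\otimes\pi_C^*\widehat{L}$, where $\cE$ is the universal subsheaf on $LQ_e(V)\times C$. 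Restricting to the slice $\bbX\times\{p\}$, the universal subsheaf on the target pulls back to $\cE(p)\otimes\widehat{L}_p$; since $\widehat{L}_p$ is a one-dimensional vector space, this is isomorphic to $\cE(p)$ as a bundle on $\bbX$, so $\tau$ identifies the Chern classes $c_k$ of the two sheaves for all $k$. Moreover $V\otimes\widehat{L}$ is $M\otimes\widehat{L}^2$-valued, and a direct computation from the formula for $D(n,e,\ell)$ gives $D(n,e,\deg M)=D(n,e+n\hat{\ell},\deg M+2\hat{\ell})$; hence $LQ_e(V)$ has property $\cP$ if and only if $LQ_{e+n\hat{\ell}}(V\otimes\widehat{L})$ does. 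When they do, $\tau_*$ carries fundamental class to fundamental class, so for every weighted homogeneous polynomial $Q$ we obtain
\[ N^{\deg V}_{C,e}(Q;V)\ =\ N^{\deg V+2n\hat{\ell}}_{C,e+n\hat{\ell}}(Q;V\otimes\widehat{L}). \]

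Second, I would note that Hecke transforms commute with $\otimes\,\widehat{L}$: tensoring the defining sequence $0\to W_\Lambda\to W\to W_p/\Lambda\to0$ by $\widehat{L}$ shows that if $\tW$ is the Hecke transform of $W$ at points $q_1,\dots,q_t$ along Lagrangian subspaces $\Lambda_i\subset W_{q_i}$, then $\tW\otimes\widehat{L}$ is the Hecke transform of $W\otimes\widehat{L}$ at $q_1,\dots,q_t$ along the Lagrangian subspaces $\Lambda_i\otimes\widehat{L}_{q_i}$, and $\deg\big((\tW\otimes\widehat{L})/(W\otimes\widehat{L})\big)=\deg(\tW/W)=tn$. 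Now for the proof proper: both sides vanish unless $\deg P(\alpha)=D(n,e,\ell)$, so assume this. By Lemma \ref{lemma-heckeP1} choose $t\gg0$ and a general Hecke transform $\tW$ with $\deg(\tW/W)=tn$ such that $LQ_e(\tW)$ has property $\cP$; by the first paragraph $LQ_{e+n\hat{\ell}}(\tW\otimes\widehat{L})$ then also has property $\cP$, so $\tW\otimes\widehat{L}$ is an admissible Hecke transform for computing $\widetilde{N}^{w+2n\hat{\ell}}_{C,e+n\hat{\ell}}(\,\cdot\,;W\otimes\widehat{L})$. Using the second paragraph together with the independence of $\widetilde{N}$ from the chosen Hecke transform (Lemma \ref{lemma: Ntilde}(1)),
\[ \widetilde{N}^{w+2n\hat{\ell}}_{C,e+n\hat{\ell}}(P(\alpha);W\otimes\widehat{L})\ =\ N^{w+2n\hat{\ell}+tn}_{C,e+n\hat{\ell}}\big(P(\alpha)\,\Qt_{\rho_n}^t(\alpha);\tW\otimes\widehat{L}\big). \]
Applying the displayed identity of the first paragraph with $V=\tW$ and $Q=P(\alpha)\Qt_{\rho_n}^t(\alpha)$, the right-hand side equals $N^{w+tn}_{C,e}\big(P(\alpha)\Qt_{\rho_n}^t(\alpha);\tW\big)=\widetilde{N}^w_{C,e}(P(\alpha);W)$, which is the assertion.

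I do not expect a deep obstacle; the work is bookkeeping. The two points needing care are: checking that the two expected dimensions genuinely coincide, so that property $\cP$ transfers across $\tau$; and verifying that $\tW\otimes\widehat{L}$ really is a symplectic Hecke transform of $W\otimes\widehat{L}$ of the correct degree, so that it may legitimately be fed into the definition of $\widetilde{N}$ — here it is essential that, by Lemma \ref{lemma: Ntilde}(1), \emph{any} sufficiently general Hecke transform of the right degree may be used. The only conceptual input is the observation that the universal subsheaf transforms by $\otimes\pi_C^*\widehat{L}$, so on a fiber slice $\bbX\times\{p\}$ it changes only by the one-dimensional space $\widehat{L}_p$, leaving its Chern classes unchanged.
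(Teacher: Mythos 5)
Your proof is correct and follows essentially the same route as the paper, which simply declares the result immediate from the isomorphism $LQ_e(W)\isom LQ_{e+n\hat{\ell}}(W\otimes\widehat{L})$ given by $[E\to W]\mapsto[E\otimes\widehat{L}\to W\otimes\widehat{L}]$. The extra bookkeeping you carry out — invariance of the fiberwise Chern classes, equality of the expected dimensions $D(n,e,\ell)=D(n,e+n\hat{\ell},\ell+2\hat{\ell})$, and compatibility of the isomorphism with Hecke transforms so that $\tW\otimes\widehat{L}$ is an admissible choice in the definition of $\widetilde{N}$ — is exactly what the paper leaves implicit, and you verify it correctly.
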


\begin{proof} The proposition is immediate from the fact, already used in Lemma \ref{lemma-heckeP1}, that the association
\[ [ E \to W] \ \mapsto \ \left[ ( E \otimes \widehat{L} ) \to ( W \otimes \widehat{L} ) \right] \]
defines an isomorphism $LQ_e(W) \isom LQ_{e + n\hat{\ell}} ( W \otimes \widehat{L} )$. \end{proof}

\begin{prop} \label{prop:recursive} Let $W$ be an arbitrary symplectic bundle of degree $w$, and assume $\LQ$ is nonempty. Then for any integer $k \geq 0$, we have
\be \label{eq0} \widetilde{N}_{C,e}^w ( P (\alpha) ; W ) \ = \ \widetilde{N}_{C,e-nk}^w ( P (\alpha) \cdot \Qt_{\rho_n}^{2k}(\alpha) ; W ) . \ee
\end{prop}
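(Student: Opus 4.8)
The plan is to reduce Proposition~\ref{prop:recursive} to the already-established behavior of $\widetilde{N}$ under Hecke transforms (Lemma~\ref{Prop:Hecke-GW}) combined with the twisting relation (Proposition~\ref{Prop:tranform-GW}). The starting observation is that a single Hecke transform raises the degree $w$ by $n$ and multiplies the class by $\Qt_{\rho_n}(\alpha)$; performing $2k$ Hecke transforms (equivalently, twisting by a line bundle of degree $k$, which is a Hecke transform of degree $2nk$) raises $w$ by $2nk$ and multiplies by $\Qt_{\rho_n}^{2k}(\alpha)$, while simultaneously a degree-$k$ twist shifts $e$ to $e + nk$. So I expect the two operations to combine in such a way that $w$ is restored and $e$ is shifted, leaving the identity claimed.

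Concretely, first I would apply Proposition~\ref{Prop:tranform-GW} with $\widehat L$ a line bundle of degree $\hat\ell = k$ (for instance $\cO_C(kp)$), giving
\[ \widetilde{N}_{C,e-nk}^{w}(P(\alpha)\,\Qt_{\rho_n}^{2k}(\alpha);W) \ = \ \widetilde{N}_{C,e}^{w+2nk}\big(P(\alpha)\,\Qt_{\rho_n}^{2k}(\alpha);W\otimes\widehat L\big). \]
So the right-hand side of \eqref{eq0} equals an intersection number on $LQ_e(W\otimes\widehat L)$. Next I would unwind the definition of $\widetilde N$ on both sides in terms of a common large Hecke transform: choose $t\gg 0$ and a general Hecke transform $\tW$ of $W$ with $\deg(\tW/W)=tn$ so that $LQ_e(\tW)$ has property $\cP$, and note that $\tW\otimes\widehat L$ is then a general Hecke transform of $W\otimes\widehat L$ of degree $(t+2k)n$ over $W\otimes \widehat L$ whose Lagrangian Quot scheme also has property $\cP$ (shrinking/enlarging $t$ if needed, using Lemma~\ref{lemma-heckeP1}). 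Since $LQ_e(\tW)\cong LQ_e(\tW\otimes\widehat L)$ compatibly with the universal subsheaf up to the twist $\widehat L$ — and the Chern classes $c_i(\cE(p)^\vee)$ entering $P$ and $\Qt_{\rho_n}$ are unaffected on the nose because $\widehat L$ has a section and we may evaluate at a point where the isomorphism is an honest bundle isomorphism, or more cleanly because $\widetilde N$ is a deformation invariant and $\tW\otimes\widehat L$ deforms to a Hecke transform of $\tW$ — both $\widetilde N$'s unwind to the same integral $N^{w+(t+2k)n}_{C,e}\big(P(\alpha)\,\Qt_{\rho_n}^{t+2k}(\alpha);\cdot\big)$ on that common scheme. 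Tracking the bookkeeping of degrees and powers of $\Qt_{\rho_n}$ then yields the identity.

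An alternative, and perhaps cleaner, route avoids the explicit twist: apply Lemma~\ref{Prop:Hecke-GW} $2k$ times to pass from $\widetilde{N}_{C,e-nk}^w(P(\alpha)\Qt_{\rho_n}^{2k}(\alpha);W)$ — interpreted via its defining Hecke transform $\tW$ of $W$ — to the same number, then observe that performing $2k$ Hecke transforms at $2k$ pairs of complementary Lagrangian subspaces in $k$ fibers is, up to deformation, the twist $W\otimes\cO_C(kp)$, which by Proposition~\ref{Prop:tranform-GW} carries $e-nk$ to $e$ with $w$ unchanged; deformation invariance of $\widetilde N$ (Lemma~\ref{lemma: Ntilde}(2)) glues the two. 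Either way, the only substantive point to verify carefully is the compatibility of the universal subsheaf under the isomorphism $LQ_e(W)\cong LQ_{e+n\hat\ell}(W\otimes\widehat L)$, i.e.\ that the $\Qt_{\rho_n}^{2k}$ factors really do match up across the two descriptions; this is where I expect the main (though routine) effort to lie, and it is handled most painlessly by invoking the deformation invariance of $\widetilde N$ rather than by a direct Chern-class computation. Everything else is bookkeeping with the parameters $e$, $w$, $\ell$ and the number of Hecke transforms.
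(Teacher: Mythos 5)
Your proposal is correct and takes essentially the same route as the paper's own proof: unwind both sides of (\ref{eq0}) as honest intersection numbers on large general Hecke transforms having property $\cP$ (Lemma \ref{lemma-heckeP1}), use Proposition \ref{Prop:tranform-GW} with a line bundle $\widehat{L}$ of degree $k$ to trade the shift $e \mapsto e-nk$ for the factor $\Qt_{\rho_n}^{2k}$ and a degree jump of $2nk$, and glue the two resulting numbers via the deformation/choice invariance of Lemma \ref{lemma: Ntilde}. The only blemishes are bookkeeping slips --- $\tW\otimes\widehat{L}$ is a Hecke transform of $W\otimes\widehat{L}$ of degree $tn$ (it is of degree $(t+2k)n$ over $W$), and the twist gives $LQ_{e-nk}(\tW)\cong LQ_e(\tW\otimes\widehat{L})$ rather than $LQ_e(\tW)\cong LQ_e(\tW\otimes\widehat{L})$ --- neither of which affects your correctly matched final integral $N^{w+(t+2k)n}_{C,e}\left( P(\alpha)\,\Qt_{\rho_n}^{t+2k}(\alpha) ; \cdot \right)$.
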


\begin{proof} Firstly, by the definition of $\widetilde{N}_{C, e}^w ( P (\alpha) ; W)$, for large enough $m \gg 2k$ the left hand side of (\ref{eq0}) can be written as
\be \label{eq1} \widetilde{N}_{C,e}^w ( P (\alpha) ; W ) \ = \ N_{C,e}^{w+mn} ( P (\alpha) \cdot \Qt_{\rho_n}^m(\alpha) ; \tW ) \ee
for a general Hecke transform $W \subset \tW$ with $\deg(\tW) = w + mn$.

Now set $h := m - 2k$. Since $h$ is sufficiently large, the right hand side of (\ref{eq0}) can be written as
\be\label{eq2} \widetilde{N}_{C, e-nk}^w ( P (\alpha) \cdot \Qt_{\rho_n}^{2k}(\alpha) ; W ) \ = \ N_{C, e-nk}^{w+hn} ( P (\alpha) \cdot \Qt_{\rho_n}^{2k+h} (\alpha) ; \tW_1) \ee
for a general Hecke transform $W \subset \tW_1$ with $\deg(\tW_1) = w+nh$. Let $\widehat{L}$ be a line bundle of degree $k$. %, and write $\widehat{W}_1 := \tW_1 \otimes \widehat{L}$.
 Then by Proposition \ref{Prop:tranform-GW}, the right hand side of (\ref{eq2}) can in turn be written as
\begin{multline} \label{eq3}
N_{C, e - nk}^{w + hn} ( P (\alpha) \cdot \Qt_{\rho_n}^{2k + h} (\alpha) ; \tW_1) \ = \ N_{C,e}^{w + hn + 2nk}( P (\alpha) \cdot \Qt_{\rho_n}^{2k+h} (\alpha) ; \tW_1 \otimes \widehat{L} ) \\
 = \ N_{C,e}^{w + mn}( P (\alpha) \cdot \Qt_{\rho_n}^m (\alpha) ; \tW_1 \otimes \widehat{L} ) \end{multline}
since $m = h + 2k$. As both $LQ_e ( \tW )$ and $LQ_e ( \tW_1 \otimes \widehat{L} )$ have property $\cP$, by Lemma \ref{lemma: Ntilde} (2) the right hand sides of (\ref{eq1}) and (\ref{eq3}) coincide.
\end{proof}

\subsection{Counting Lagrangian subbundles}
Informally speaking, a Gromov--Witten invariant of a manifold $X$ gives a virtual count of certain curves inside $X$ with prescribed intersection properties. In this section, we shall prove that in fact the Gromov--Witten invariants $N_{C, e}^w (P ; W)$ really enumerate Lagrangian subbundles; the virtual count corresponds to an actual number.

\begin{prop} \label{Prop-Open part} Let $W$ be any symplectic bundle of degree $w = n \ell$ and $e$ an integer such that $LQ_e(W)$ has property $\cP$. Let $P(\alpha)$ be a weighted homogeneous polynomial of degree $D(n, e, \ell)$ which is of the form
$$ P(\alpha) \ = \ \left( \prod_{i=1}^s \alpha_{k_i} \right) \cdot \Qt_{\rho_n}^t (\alpha) . $$
Let $p_1, \ldots , p_s, q_1, \ldots , q_t \in C$ be distinct points. For each $p_i$, let $H_i \subset W_{p_i}$ be an isotropic subspace of dimension $n + 1 - k_i$, where $1 \le k_i \le n$. For each $q_j$, let $\Lambda_j \subset W_{q_j}$ be a Lagrangian subspace. Then the Gromov--Witten number $\widetilde{N}_{C,e}^w ( P ; W )$ enumerates the points, counted with multiplicities, of the intersection
\be \label{MovingLemmaIntersection} \left( \bigcap_{i=1}^s \bbX_{k_i}^\circ( \gamma_i \cdot H_i; p_i ) \right) \ \cap \ \left( \bigcap_{j=1}^t \bbX_{\rho_n}^\circ (\eta_j \cdot \Lambda_j ; q_j) \right) \ee
for a general choice of $\gamma_i \in \Sp \left( W_{p_i} \right)$ and $\eta_j \in \Sp \left( W_{q_j} \right)$.
\end{prop}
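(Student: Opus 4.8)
The plan is to reduce the statement to the intersection-theoretic content of Proposition \ref{prop:Chern} and Corollary \ref{cor-class} together with a genericity (Kleiman-type) argument showing that the relevant degeneracy loci meet transversally and inside the saturated locus. First I would observe that, by the definition of $\widetilde{N}_{C,e}^w(P;W)$ and by Proposition \ref{definitions coincide}, since $\LQ$ already has property $\cP$ we have $\widetilde{N}_{C,e}^w(P;W) = N_{C,e}^w(P;W) = \int_{[\LQ]} P(c_\bullet(\cE(p)^\vee))$, where as in Remark \ref{remark:intersection} the Chern classes may be evaluated at distinct points of $C$. Writing $P(\alpha) = \left(\prod_{i=1}^s \alpha_{k_i}\right)\cdot \Qt_{\rho_n}^t(\alpha)$ and using distinct points $p_1,\dots,p_s,q_1,\dots,q_t$, the integrand becomes $\prod_{i=1}^s c_{k_i}(\cE(p_i)^\vee)\cdot\prod_{j=1}^t \Qt_{\rho_n}(\cE(q_j)^\vee)$.

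Next I would invoke Proposition \ref{prop:Chern}, which gives $[\bbX_{k_i}(H_i;p_i)] = c_{k_i}(\cE(p_i)^\vee)\cap[\LQ]$ for \emph{any} isotropic subspace $H_i$ of the appropriate dimension, and Corollary \ref{cor-class}, which gives $[\bbX_{\rho_n}(\Lambda_j^\vee;q_j)] = \Qt_{\rho_n}(\cE(q_j)^\vee)\cap[\LQ]$ for general $\Lambda_j$ (here using that $LQ_e(W^{\Lambda_j})$ has property $\cP$, which holds for general $\Lambda_j$ by Lemma \ref{lemma-heckeP1} applied with a single transform, or can be arranged). Since $[\bbX_{k_i}(\gamma_i\cdot H_i;p_i)] = [\bbX_{k_i}(H_i;p_i)]$ by translation invariance and similarly for the $\eta_j$, the integral $N_{C,e}^w(P;W)$ equals the degree of the cycle obtained by capping $[\LQ]$ successively with these degeneracy classes — i.e.\ the length of the scheme-theoretic intersection (\ref{MovingLemmaIntersection}) computed in the Chow ring, \emph{provided} that intersection is zero-dimensional and supported in the locus where all the individual degeneracy loci are generically reduced of the expected codimension.

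The main step, and the principal obstacle, is therefore the moving lemma: I must show that for general $\gamma_i$ and $\eta_j$, the intersection $\left(\bigcap_i \bbX_{k_i}(\gamma_i\cdot H_i;p_i)\right)\cap\left(\bigcap_j \bbX_{\rho_n}(\eta_j\cdot\Lambda_j;q_j)\right)$ is a finite set of reduced points, all lying in $\LQo$, so that it coincides with the intersection (\ref{MovingLemmaIntersection}) of the \emph{open} degeneracy loci and the cycle-theoretic count agrees with the naive point count. For this I would apply Kleiman's transversality theorem \cite[Theorem 2]{Kle} on the saturated locus: on $\LQo$ the evaluation maps $\ev_{p_i},\ev_{q_j}$ to the respective Lagrangian Grassmannians $\LG(W_{p_i})$, $\LG(W_{q_j})$ are morphisms, each $\bbX_{k_i}^\circ(\gamma_i\cdot H_i;p_i)$ and $\bbX_{\rho_n}^\circ(\eta_j\cdot\Lambda_j;q_j)$ is the preimage of a Schubert variety translated by a general element of the symplectic group acting transitively on $\LG$, and a repeated application of Kleiman (using distinct points so the symplectic group factors act independently) yields that the intersection inside $\LQo$ is transverse, hence smooth of dimension $D(n,e,\ell) - \sum_i k_i - t|\rho_n| = 0$. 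The remaining point is to rule out excess contributions from the boundary $\LQ\setminus\LQo$: here I would argue as in the proof of Proposition \ref{prop:Chern}, using the stratification by $\cB_r$ and Lemma \ref{BrFibration}, that each $\bbX_{k_i}(\gamma_i\cdot H_i;p_i)\cap\cB_r$ and $\bbX_{\rho_n}(\eta_j\cdot\Lambda_j;q_j)\cap\cB_r$ has codimension at least $k_i$ (resp.\ $|\rho_n|$) in $\cB_r$ — and moreover, since the $\bbX_{\rho_n}(\eta_j\cdot\Lambda_j;q_j)$ are precisely the images of $\Psi^{\eta_j\cdot\Lambda_j}$ which for general $\eta_j$ avoid the nonsaturated locus generically, the full intersection of all $s+t$ loci misses $\LQ\setminus\LQo$ entirely once the parameters are general. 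Combining, the scheme-theoretic intersection (\ref{MovingLemmaIntersection}) is a finite reduced set of points in $\LQo$ whose cardinality is $N_{C,e}^w(P;W) = \widetilde{N}_{C,e}^w(P;W)$, as claimed; counting with multiplicities then accounts for the (a priori possible, but here generically trivial) nonreducedness, matching the statement.
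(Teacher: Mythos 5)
Your overall architecture matches the paper's: reduce $\widetilde{N}^w_{C,e}(P;W)$ to $N^w_{C,e}(P;W)$ via Proposition \ref{definitions coincide}, identify the Chern/$\Qt$-classes with the degeneracy loci via Proposition \ref{prop:Chern} and Corollary \ref{cor-class}, and then show the resulting intersection avoids the nonsaturated locus by stratifying with the $\cB_r$ and invoking Kleiman. But the decisive step is exactly the one you leave unproved, and the justification you offer for it does not work. First, the bound ``$\bbX_{\rho_n}(\eta_j\cdot\Lambda_j;q_j)\cap\cB_r$ has codimension at least $|\rho_n|$ in $\cB_r$'' is not contained in Proposition \ref{prop:Chern}, which treats only the special classes $\lambda=(k)$; for $\lambda=\rho_n$ the paper has to redo the analysis on each stratum, splitting the condition at $q_j$ according to the rank $m_j$ of $E_{q_j}\to\bE_{q_j}$ and the dimension $b_j$ of $\bE_{q_j}\cap\eta_j\cdot\Lambda_j$, into a base condition on $LQ^\circ_{e+r}(W)$ of codimension $\tfrac12 n(n+1)-b_j(n-b_j)$ and a fiber condition in $\Quot^{0,r}(\bE)$ of codimension $n^2+m_j^2-nm_j-m_jb_j$, and then verifying the identity whose error term is $\tfrac12\left((n-b_j)^2+(n-m_j)^2+(b_j-m_j)^2\right)\ge 0$. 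Without this computation the needed stratum-wise codimension bound is simply asserted.

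Second, and more seriously, even granting codimension bounds for each individual locus in $\cB_r$, your conclusion that ``the full intersection of all $s+t$ loci misses $\LQ\setminus\LQo$ entirely'' is a non sequitur: the fact that each $\bbX_{\rho_n}(\eta_j\cdot\Lambda_j;q_j)$ is \emph{generically} saturated says nothing about where the $(s+t)$-fold intersection sits, and Kleiman's theorem cannot be applied wholesale on $\cB_r$, because at a point where $E$ fails to be saturated the assignment $E\mapsto\psi(E_{q_j})$ is not a morphism to a fixed Grassmannian. The paper's proof deals with this by applying Kleiman only to the base conditions (\ref{conditionOneA}) and (\ref{conditionTwo}), which are pullbacks of Schubert varieties under the morphisms $\ev_x$ on $LQ^\circ_{e+r}(W)$ at distinct points, handling the fiber conditions (\ref{conditionOneB}) and (\ref{conditionThree}) by explicit dimension counts valid for fixed translates, using distinctness of the points to see that the fiber conditions cut each fiber of $f_r$ properly, and only then summing all codimensions to get at least $\sum_i k_i+t\cdot\tfrac12 n(n+1)=D(n,e,\ell)$, which exceeds $\dim\cB_r$ for $r\ge 1$ by property $\cP$. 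Your proposal needs this base/fiber decomposition and the final summation; as written, the boundary-avoidance step is a gap rather than a proof.
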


\begin{proof} By Corollary \ref{cor-class} and Remark \ref{remark:intersection}, the invariant $N_{C,e}^w ( P ; W)$ enumerates the points in the intersection
$$ \left( \bigcap_{i=1}^s \bbX_{k_i}( \gamma_i \cdot H_i; p_i ) \right) \ \cap \ \left( \bigcap_{j=1}^t \bbX_{\rho_n} (\eta_j \cdot \Lambda_j ; q_j) \right) $$
for a general choice of $\gamma_i \in \Sp (W_{p_i})$ and $\eta_j \in \Sp(W_{q_j})$. To see that this coincides with (\ref{MovingLemmaIntersection}), we show that all the intersection points lie inside $\LQo$.

Suppose $[ \psi \colon E \to W ]$ is a point of the intersection (\ref{MovingLemmaIntersection}) such that $E$ is nonsaturated, so that $\bE / E$ is a torsion sheaf of degree $r \ge 1$. For $1 \le i \le s$, we have maps $\bE_{p_i} \to \frac{W_{p_i}}{(\gamma_i \cdot H_i)^\perp}$. For those $i$ such that this is not surjective, $[E \to W]$ satisfies
\begin{equation} E \in f_r^{-1} \left( \bbY_{k_i} ( \gamma_i \cdot H_i ; p_i ) \right) , \label{conditionOneA} \end{equation}
where $f_r$ is as defined in {\S} \ref{NonSat} and $\bbY_{k_i} ( \gamma_i \cdot H_i ; p_i)$ as in Proposition \ref{prop:Chern}. By Kleiman's theorem, for general $\gamma_i$ this is a condition of codimension $k_i$ on each component of $LQ_{e+r}^\circ (W)$ (note that this may not be equidimensional).

On the other hand, for those $i$ such that $\bE_{p_i} \to \frac{W_{p_i}}{(\gamma_i \cdot H_i)^\perp}$ is surjective, we must have
\begin{equation} \dim \left( \psi ( E_{p_i} ) \cap (\gamma_i \cdot H_i)^\perp \cap \bE_p \right) \ \ge \ l_i - n + k_i \label{conditionOneB} \end{equation}
where $l_i$ is the rank of the linear map $E_{p_i} \to \bE_{p_i}$. By the proof of Proposition \ref{prop:Chern}, the condition (\ref{conditionOneB}) is of codimension at least $k_i$ on each fiber of $f_r \colon B_r \to LQ_{e+r}^\circ ( W )$.

Next, for $1 \le j \le t$ we consider the compositions $E_{q_j} \to \bE_{q_j} \to \frac{W_{q_j}}{\eta_j \cdot \Lambda_j}$. Write $m_j$ for the rank of $E_{q_j} \to \bE_{q_j}$ and $b_j := \dim ( \bE_{q_j} \cap \eta_j \cdot \Lambda_j )$. %Then $m_j \le b_j \le n$ for each $j$.
Now we claim that the Schubert cycle
\begin{equation} \{ \Pi \in \LG \left( W_{q_j} \right) \: | \: \dim ( \Pi \cap \Lambda_j ) \ = \ b_j \} \label{PiCapLambda} \end{equation}
is of dimension $b_j ( n - b_j )$. For; as $\Lambda_j$ is isotropic and of dimension $n$, the image of $\Pi \to W_{q_j} / \Lambda_j = \Lambda_j^\vee$ is exactly $\left( \Pi \cap \Lambda_j \right)^\perp$. Hence $\Pi \in \LG ( W_{q_j} )$ is determined by the choice of $\Pi \cap \Lambda_j \in \Gr ( b_j, \Lambda_j )$, which is a variety of dimension $b_j ( n - b_j )$. Therefore, (\ref{PiCapLambda}) has codimension $\frac{1}{2}n(n+1) - b_j ( n - b_j )$ in $\LG \left( W_{q_j} \right)$. Hence by Kleiman's theorem, for general $\eta_j \in \Sp \left( W_{q_j} \right)$ the locus of $F \in LQ_{e+r}^\circ (W)$ satisfying
\begin{equation} \dim ( F_{q_j} \cap \eta_j \cdot \Lambda_j ) \ = \ b_j \label{conditionTwo} \end{equation}
is either empty or of codimension $\frac{1}{2}n(n+1) - b_j ( n - b_j )$ in $LQ_{e+r}^\circ (W)$.

Furthermore, as in the proof of Proposition \ref{prop:Chern}, for fixed $\eta_j$ the condition that 
\begin{multline} \hbox{the image of } ( E_{q_j} \to \bE_{q_j} ) \hbox{ has dimension $m_j$ and} \\
\hbox{is contained in } \bE_{q_j} \cap \eta_j \cdot \Lambda_j \label{conditionThree} \end{multline}
defines a locus of dimension
\begin{multline*} \dim \Gr ( m_j , b_j ) + \dim \Quot^{0, r - (n - m_j)} ( \bE ) \ = \ m_j ( b_j - m_j ) + n ( r - n + m_j ) \\
 = \ \dim \Quot^{0, r} ( \bE ) - ( n^2 + m_j^2 - n m_j - m_j b_j ) \end{multline*}
in $\Quot^{0, r} ( \bE )$.

Now (\ref{conditionOneA}) and (\ref{conditionTwo}) are conditions purely on the base of $f_r \colon B_r \to LQ_{e+r}^\circ ( W )$. They are defined by pulling back Schubert varieties via the evaluation maps $\ev_x \colon LQ_{e+r}^\circ (W) \to \LG( W_x )$ (which are morphisms) for distinct points $x \in \{ p_1 , \ldots , p_s , q_1 , \ldots , q_t \}$. Thus an induction argument using Kleiman's theorem shows that the intersection of the loci defined on $LQ_{e+r}^\circ$ by (\ref{conditionOneA}) and (\ref{conditionTwo}) is either empty or of the expected codimension on each component of $LQ_{e+r}^\circ (W)$.

Next, (\ref{conditionOneB}) and (\ref{conditionThree}) are conditions purely on the fibers of $f_r \colon B_r \to LQ_{e+r}^\circ ( W )$. As the points $p_i$ and $q_j$ are all distinct, for general $\eta_j$ the loci defined by these conditions intersect properly in each fiber of $f_r$.

Putting all these together, to compute the codimension of (\ref{MovingLemmaIntersection}) for general $\gamma_1 , \ldots , \gamma_s$ and $\eta_1 , \ldots , \eta_t$, we can add the codimensions defined by the conditions (\ref{conditionOneA}), (\ref{conditionOneB}), (\ref{conditionTwo}) and (\ref{conditionThree}). We obtain a locus in $B_r$ which is empty or of codimension at least
\begin{multline*} \sum_{i=1}^s k_i + t \cdot \frac{1}{2}n(n+1) + \sum_{j = 1}^t \left( - b_j ( n - b_j ) + ( n^2 + m_j^2 - n m_j - m_j b_j ) \right) \\
% = \ \sum_{i=1}^s k_i + \frac{1}{2}n(n+1) + ( b_j^2 + n^2 + (m_j)^2 - b_j n - n m_j - m_j b_j ) \\
 = \ \sum_{i=1}^s k_i + t \cdot \frac{1}{2}n(n+1) + \frac{1}{2} \cdot \sum_{j=1}^t \left( ( n - b_j)^2 + (n - m_j)^2 + (b_j - m_j)^2 \right) \\
\ge \ \sum_{i=1}^s k_i + t \cdot \frac{1}{2}n(n+1) \ = \ D (n, e, \ell) . \end{multline*}
But since $LQ_e (W)$ has property $\cP$, no $B_r$ is dense. Thus the intersection of (\ref{MovingLemmaIntersection}) with the nonsaturated locus is empty for general $\gamma_i$ and $\eta_j$, as desired. \end{proof}

\begin{cor} \label{Prop:trivial} Let $C$ be a smooth projective curve of genus $g$. Suppose $\lambda^{1} , \ldots , \lambda^{m} \in \Dn$ are strict partitions such that $\sum_{i=1}^m|\lambda^i| = D(n, e, 0)$. Set $P(\alpha) = \prod_{i=1}^m \Qt_{\lambda^i}(\alpha)$. Then we have the equality \begin{equation} \label{multiplication}
\widetilde{N}^{0}_{C,e}(P(\alpha); \Oc^{\oplus 2n}) \ = \ \left\langle \sigma_{\lambda^1} , \ldots\hspace{0.02in} , \sigma_{\lambda^m}\right\rangle_{g,|e|} .
\end{equation}
\end{cor}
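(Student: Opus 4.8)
The plan is to identify both sides of (\ref{multiplication}) with one and the same count of morphisms $C\to\lg$ satisfying prescribed Schubert conditions. First I would dispose of the degenerate case: if $e>0$ then $LQ_e(\Oc^{\oplus 2n})$ is empty, since the saturation of a Lagrangian subsheaf of positive degree would be a positive‑degree subbundle of the trivial bundle; moreover $\sum_i|\lambda^i|=D(n,e,0)$ is then incompatible with (\ref{dimcondition}) for $d=-e$, so $\langle\sigma_{\lambda^1},\ldots,\sigma_{\lambda^m}\rangle_{g,|e|}=0$ as well, and both sides vanish. So assume $e\le 0$, whence $LQ_e^\circ(\Oc^{\oplus 2n})=\Mor^{-e}(C,\lg)$ and $\widetilde{N}^0_{C,e}(P(\alpha);\Oc^{\oplus 2n})$ is defined. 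A morphism $f$ of degree $-e$ corresponds to a point $[E_f\to\Oc^{\oplus 2n}]$, and for any $p\in C$, $\gamma\in\Sp_{2n}(\C)$ and $\lambda\in\Dn$ one has $f(p)\in Z_\lambda(\gamma\cdot\mathbf{H}_\bullet)$ exactly when $[E_f\to\Oc^{\oplus 2n}]\in\bbX_\lambda^\circ(\gamma\cdot H_\bullet^{(p)};p)$, where $H_\bullet^{(p)}$ is the flag in $(\Oc^{\oplus 2n})_p=\C^{2n}$ underlying $\mathbf{H}_\bullet$. Since the hypothesis $\sum_i|\lambda^i|=D(n,e,0)$ is precisely (\ref{dimcondition}) with $d=-e$, Definition \ref{GW-for-LG} gives
\[ \bigl\langle\sigma_{\lambda^1},\ldots,\sigma_{\lambda^m}\bigr\rangle_{g,|e|} \ = \ \#\Bigl(\bigcap_{i=1}^m\bbX_{\lambda^i}^\circ(\gamma_i\cdot H_\bullet^{(i)};p_i)\Bigr), \]
a count in $\Mor^{-e}(C,\lg)=LQ_e^\circ(\Oc^{\oplus 2n})$ for distinct general $p_i\in C$ and general $\gamma_i\in\Sp_{2n}(\C)$; by Kleiman's theorem this set is finite. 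It remains to identify $\widetilde{N}^0_{C,e}(P(\alpha);\Oc^{\oplus 2n})$ with this number.

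To compute the left‑hand side I would use a generalization of Proposition \ref{Prop-Open part} to arbitrary products of $\Qt$‑classes, proved by the same argument: if $\bbX=LQ_e(W)$ has property $\cP$ and $P'(\alpha)=\bigl(\prod_{i=1}^m\Qt_{\lambda^i}(\alpha)\bigr)\Qt_{\rho_n}^t(\alpha)$ is of weighted degree $D(n,e,\ell)$, then $N^w_{C,e}(P';W)$ is the number of points, counted with multiplicities (all equal to $1$ for general data), of $\bigcap_i\bbX_{\lambda^i}^\circ(\gamma_i\cdot H_\bullet^{(i)};p_i)\cap\bigcap_j\bbX_{\rho_n}^\circ(\eta_j\cdot\Lambda_j;q_j)$ in $LQ_e^\circ(W)$, for distinct general points and general $\gamma_i\in\Sp(W_{p_i})$, $\eta_j\in\Sp(W_{q_j})$. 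The point is that Lemma \ref{Lemma:Chern Class} is already valid for every $\lambda\in\Dn$, so $\Qt_{\lambda^i}(\cE(p_i)^\vee)\cap[\bbX]=[\bbX_{\lambda^i}(\gamma_i\cdot H_\bullet^{(i)};p_i)]$ as soon as hypotheses (1), (2) of that lemma are checked; and this check, together with the assertion that all intersection points are saturated, is carried out by repeating, on the nonsaturated strata $\cB_r$, the codimension bookkeeping of the proofs of Propositions \ref{prop:Chern} and \ref{Prop-Open part} --- via the fibration $f_r\colon\cB_r\to LQ_{e+r}^\circ(W)$, Kleiman's theorem on the base, and Schubert‑variety dimension counts on the fibres $\Quot^{0,r}(F)$ --- now for the several Schubert conditions cutting out a general $Z_{\lambda^i}$ in place of the single special conditions treated there. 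Making this bookkeeping go through in the general case is the main obstacle; a routine but combinatorially heavier alternative is to expand each $\Qt_{\lambda^i}$ into special Schubert classes by Giambelli's formula $\sigma_\lambda=\Qt_\lambda(\sigma_1,\ldots,\sigma_n)$, apply Proposition \ref{Prop-Open part} termwise, and reassemble using multilinearity of the Gromov--Witten invariant.

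Granting this, by Lemma \ref{lemma-heckeP1} I would choose $t\gg 0$ (possibly $t=0$, if $LQ_e(\Oc^{\oplus 2n})$ already has property $\cP$) and a general symplectic Hecke transform $\tW$ of $\Oc^{\oplus 2n}$ along Lagrangian subspaces $\Lambda_1,\ldots,\Lambda_t$ of fibres at distinct points $q_1,\ldots,q_t$, disjoint from the $p_i$, so that $LQ_e(\tW)$ has property $\cP$ and --- the $\Lambda_j$ being chosen generic enough for all the Kleiman conditions involved --- the statement above applies to $\tW$ with the polynomial $P(\alpha)\Qt_{\rho_n}^t(\alpha)$ and the particular subspaces $\Lambda_j^\vee\subset\tW_{q_j}$. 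Then, by the definition of $\widetilde{N}$, $\widetilde{N}^0_{C,e}(P(\alpha);\Oc^{\oplus 2n})=N^{tn}_{C,e}(P(\alpha)\Qt_{\rho_n}^t(\alpha);\tW)$ is the number of points of $\bigcap_i\bbX_{\lambda^i}^\circ(\gamma_i\cdot H_\bullet^{(i)};p_i)\cap\bigcap_j\bbX_{\rho_n}^\circ(\Lambda_j^\vee;q_j)$ in $LQ_e^\circ(\tW)$. Now by Proposition \ref{prop:relation}, applied repeatedly, the embedding $\Psi\colon LQ_e(\Oc^{\oplus 2n})\hookrightarrow LQ_e(\tW)$ has image $\bigcap_j\bbX_{\rho_n}(\Lambda_j^\vee;q_j)$ and, by the description of $\Psi^\Lambda$ preceding that proposition, restricts to an isomorphism from the open set $\{[E\to\Oc^{\oplus 2n}]\in LQ_e^\circ(\Oc^{\oplus 2n}):E_{q_j}\cap\Lambda_j=0\ \forall j\}$ onto $LQ_e^\circ(\tW)\cap\bigcap_j\bbX_{\rho_n}(\Lambda_j^\vee;q_j)$; since the universal subsheaves of $\Oc^{\oplus 2n}$ and $\tW$ agree along the $p_i$, the loci $\bbX_{\lambda^i}^\circ(\gamma_i\cdot H_\bullet^{(i)};p_i)$ on the two schemes correspond under $\Psi$. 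Hence $\widetilde{N}^0_{C,e}(P(\alpha);\Oc^{\oplus 2n})$ equals the number of $f\in\bigcap_i\bbX_{\lambda^i}^\circ(\gamma_i\cdot H_\bullet^{(i)};p_i)\subset\Mor^{-e}(C,\lg)$ which in addition satisfy $E_{q_j}\cap\Lambda_j=0$ for all $j$; but that intersection is the \emph{finite} set of the first paragraph, so for $t\gg 0$ and general $\Lambda_j$ these extra conditions hold at every one of its points and drop out. This gives $\widetilde{N}^0_{C,e}(P(\alpha);\Oc^{\oplus 2n})=\langle\sigma_{\lambda^1},\ldots,\sigma_{\lambda^m}\rangle_{g,|e|}$, as desired.
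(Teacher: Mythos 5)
Your intended main route has a genuine gap, which you yourself flag: everything hinges on a version of Proposition \ref{Prop-Open part} (hence of Lemma \ref{Lemma:Chern Class}, together with the verification of its hypotheses as in Proposition \ref{prop:Chern}) for arbitrary strict partitions $\lambda\in\Dn$, and this is precisely what the paper does not prove and deliberately avoids. The class formula $[\bbX_\lambda]=\Qt_\lambda(\cE(p)^\vee)\cap[\bbX]$ is established only for the special classes $(k)$ and for $\rho_n$ (Proposition \ref{prop:Chern}, Corollary \ref{cor-class}); extending the codimension bookkeeping on the nonsaturated strata $\cB_r$ to a general Schubert condition is not a routine repetition (the relevant conditions on $E_p\to\bE_p$ are no longer a single special Schubert condition), and the paper even recalls that the Kresch--Tamvakis formula can fail where $\psi$ is not a bundle injection. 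So ``proved by the same argument'' cannot simply be granted.

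Your fallback is essentially the paper's actual proof, but its key step is misattributed. Expanding $P(\alpha)=\prod_i\Qt_{\lambda^i}(\alpha)$ into monomials in $\alpha_1,\ldots,\alpha_n$, linearity of $\widetilde{N}^{0}_{C,e}(\,\cdot\,;\Oc^{\oplus 2n})$ handles the left-hand side, and Proposition \ref{Prop-Open part} together with Definition \ref{GW-for-LG} handles each monomial, giving (\ref{eq5}). But to reassemble the right-hand side you must know that $\langle\sigma_{\lambda^1},\ldots,\sigma_{\lambda^m}\rangle_{g,|e|}$ equals the corresponding combination of invariants with special-class insertions; since the number of insertion points changes, this is not a consequence of ``multilinearity of the Gromov--Witten invariant''. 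The paper obtains it from the Vafa--Intriligator-type formula (Proposition \ref{vafa-Int-g}): the genus-$g$ invariant depends only on the value of the product $\prod_i\Qt_{\lambda^i}(\zeta^J)$, so any polynomial identity expressing $\prod_i\Qt_{\lambda^i}$ in the $E_k$'s may be substituted on the Gromov--Witten side, matching the linearity on the Quot-scheme side. With that input your fallback closes; and your final paragraph (identifying $N^{tn}_{C,e}(P\Qt_{\rho_n}^t;\tW)$ with the count of maps via Proposition \ref{prop:relation}, choosing the $q_j$ and $\Lambda_j$ only after the finite set of maps is fixed) is a reasonable, more detailed version of the paper's terse appeal to Proposition \ref{Prop-Open part} and the definition of the invariant --- just keep the order of quantifiers as you state it, since the $\Lambda_j^\vee$ are not general translates independent of $\tW$.
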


\begin{proof}
If $P(\alpha) = \prod_{j=1}^s \alpha_{r_j}$ is a monomial in $\alpha_1 , \ldots , \alpha_n$ of weighted degree $D(n, e, 0)$, then by Proposition \ref{Prop-Open part} and the definition of the Gromov--Witten invariant in \S\:2, we obtain
\be
\label{eq5} \widetilde{N}_{C,e}^0 ( P(\alpha) ; \mathcal{O}_{C}^{\oplus 2n}  ) \ = \ \left\langle \sigma_{k_1}, \ldots ,\sigma_{k_s} \right\rangle_{g,|e|} .
\ee
On the other hand, the Vafa--Intriligator-type formula shows that the Gromov--Witten invariant $\left\langle \sigma_{\lambda^1}, \ldots ,\sigma_{\lambda^m}\right\rangle_{g,d}$ only depends on the product $\prod_{i=1}^m \sigma_{\lambda^i}$ of the arguments. Since $\sigma_1 , \ldots , \sigma_n$ generate $\Chow^*(\lg)$, the class $P(\sigma) := \prod_{i=1}^m \sigma_{\lambda^i}$ can be written as a sum of monomials in $\sigma_1 , \ldots , \sigma_n$. But since each $\sigma_i$ corresponds to $\alpha_i$ and hence $P(\sigma)$ to $P(\alpha)$, the desired equality follows from the linearity of both sides of the equality (\ref{eq5}).
\end{proof}

Corollary \ref{Prop:trivial} together with Proposition \ref{prop:recursive} yields the following recursive relation among Gromov--Witten invariants of $\lg$.

\begin{cor} Let $n > 0$ and $g, d \geq 0$ be given. Suppose $\sum_{i=1}^m |\lambda^i |= (n+1)d - \frac{n(n+1)}{2}(g-1)$. Then for any $k \geq 0$, we have
$$
\left\langle \sigma_{\lambda^1} , \ldots \hspace{0.02in} , \sigma_{\lambda^m} \right\rangle_{g,d} \ = \ \left\langle \sigma_{\rho_n}^{2k} , \sigma_{\lambda^1} , \ldots \hspace{0.02in} , \sigma_{\lambda^m} \right\rangle_{g, d + kn} .
$$
\end{cor}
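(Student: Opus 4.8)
The plan is to deduce this recursion for Gromov--Witten invariants of $\lg$ directly from the intersection-theoretic recursion of Proposition~\ref{prop:recursive}, applied to the trivial symplectic bundle, using Corollary~\ref{Prop:trivial} to translate between the two worlds. First I would observe that, up to a shift in $e$ and a rescaling by a line bundle, the trivial symplectic bundle $\Oc^{\oplus 2n}$ gives Lagrangian Quot schemes $LQ_e(\Oc^{\oplus 2n})$ whose intersection numbers $\widetilde N^0_{C,e}(P(\alpha);\Oc^{\oplus 2n})$ compute genus $g$ Gromov--Witten invariants of $\lg$, by Corollary~\ref{Prop:trivial}. Concretely, given the strict partitions $\lambda^1,\ldots,\lambda^m$ with $\sum_i|\lambda^i| = (n+1)d - \tfrac{n(n+1)}{2}(g-1)$, I would set $e := -d$, so that $|e| = d$ and the weighted degree of $P(\alpha) := \prod_{i=1}^m \Qt_{\lambda^i}(\alpha)$ equals $D(n,e,0) = (n+1)d - \tfrac{n(n+1)}{2}(g-1)$; then Corollary~\ref{Prop:trivial} gives
\[
\widetilde N^0_{C,e}\!\left(\textstyle\prod_{i=1}^m \Qt_{\lambda^i}(\alpha);\Oc^{\oplus 2n}\right) \ = \ \left\langle \sigma_{\lambda^1},\ldots,\sigma_{\lambda^m}\right\rangle_{g,d}.
\]

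Next I would apply Proposition~\ref{prop:recursive} to the symplectic bundle $W = \Oc^{\oplus 2n}$ (of degree $w = 0$), with this $e$ and this $P(\alpha)$, noting that $LQ_e(\Oc^{\oplus 2n})$ is nonempty for $d \ge 0$. Proposition~\ref{prop:recursive} then yields, for each $k \ge 0$,
\[
\widetilde N^0_{C,e}\!\left(P(\alpha);\Oc^{\oplus 2n}\right) \ = \ \widetilde N^0_{C,\,e-nk}\!\left(P(\alpha)\cdot\Qt_{\rho_n}^{2k}(\alpha);\Oc^{\oplus 2n}\right).
\]
The right-hand side is again of the form treated by Corollary~\ref{Prop:trivial}: the polynomial $P(\alpha)\cdot\Qt_{\rho_n}^{2k}(\alpha) = \prod_{i=1}^m\Qt_{\lambda^i}(\alpha)\cdot\prod_{j=1}^{2k}\Qt_{\rho_n}(\alpha)$ is a product of $\Qt$-polynomials indexed by strict partitions, and the new value of $e$ is $e - nk = -(d+kn)$, so $|e-nk| = d+kn$. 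I would check the numerical hypothesis of Corollary~\ref{Prop:trivial}: the weighted degree $|P(\alpha)| + 2k|\rho_n| = \bigl((n+1)d - \tfrac{n(n+1)}{2}(g-1)\bigr) + 2k\cdot\tfrac{n(n+1)}{2}$ must equal $D(n,e-nk,0) = (n+1)(d+kn) - \tfrac{n(n+1)}{2}(g-1)$, and indeed $2k\cdot\tfrac{n(n+1)}{2} = kn(n+1) = (n+1)kn$, so the two sides agree. Hence Corollary~\ref{Prop:trivial} applies and gives
\[
\widetilde N^0_{C,\,e-nk}\!\left(P(\alpha)\Qt_{\rho_n}^{2k}(\alpha);\Oc^{\oplus 2n}\right) \ = \ \left\langle \sigma_{\rho_n}^{2k},\sigma_{\lambda^1},\ldots,\sigma_{\lambda^m}\right\rangle_{g,\,d+kn}.
\]
Combining the three displayed equalities yields the claimed identity.

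I do not expect a serious obstacle here: the entire content has been set up in Propositions~\ref{prop:recursive} and Corollary~\ref{Prop:trivial}, and all that remains is bookkeeping with the sign conventions relating $e$ and $d$ (recall $\deg(E_f) = -\deg(f)$, so maps of degree $d$ correspond to subsheaves of degree $-d$ and to $LQ_{-d}$) and the two dimension/degree balancing conditions, both of which I have checked above reduce to the same arithmetic identity $2k|\rho_n| = (n+1)\cdot kn$. The only point requiring a word of care is that Corollary~\ref{Prop:trivial} is stated for an arbitrary product of $\Qt$-polynomials indexed by strict partitions, so inserting $2k$ extra factors $\Qt_{\rho_n}(\alpha)$ (with $\rho_n \in \Dn$ strict) is legitimate; and Proposition~\ref{prop:recursive} is valid for any symplectic bundle with nonempty $\LQ$, in particular for $\Oc^{\oplus 2n}$. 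Thus the corollary follows formally.
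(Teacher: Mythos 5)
Your proposal is correct and matches the paper's argument: the paper derives this corollary precisely by combining Corollary \ref{Prop:trivial} with Proposition \ref{prop:recursive} applied to $\Oc^{\oplus 2n}$, which is exactly your route, including the bookkeeping $e=-d$ and the degree check $2k|\rho_n|=(n+1)kn$.
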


\section{Main results}

From the discussion in the previous sections, we conclude:

\begin{theorem} \label{Theorem: intersection} Let $C$ be a smooth projective curve of genus $g$ and $W$ a symplectic bundle over $C$ of degree $w = n \ell$. Then for a polynomial $P(\alpha)$ of degree $D(n, e, \ell),$ the number $\widetilde{N}_{g,e}^w ( P(\alpha) ; W )$ is computed by
\begin{displaymath}\widetilde{N}_{C,e}^w ( P (\alpha) ; W ) \ =
\begin{cases}
 \scriptstyle{\scriptstyle{A \sum_{J\in \mathcal{I}_{n+1}^e}\big
\{S_{\rho_{n}}(\zeta^J)\big\}^{g-1} P(\zeta^J)}} & \text{if} \  \:\ell=2m,\\
\scriptstyle {\scriptstyle{A \sum_{J\in \mathcal{I}_{n+1}^e}\big
\{S_{\rho_{n}}(\zeta^J)\big\}^{g-1} {\Qt}_{\rho_n}(\zeta^J) P(\zeta^J)}}
& \text{if} \ \:  \ell=2m-1,
\end{cases}
\end{displaymath}
where $A := 2^{n(g-1)+e-mn}$ and $P(\zeta^J) := P \left( E_1(\zeta^J), \dots, E_n(\zeta^J) \right)$.
\end{theorem}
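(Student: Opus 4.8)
The plan is to reduce the computation of $\widetilde N^w_{C,e}(P(\alpha);W)$ for an arbitrary symplectic bundle $W$ to the case of the trivial bundle $\Oc^{\oplus 2n}$, where the value is already available: combining Corollary \ref{Prop:trivial} with the Vafa--Intriligator formula of Proposition \ref{vafa-Int-g}, and passing from products $\prod_i\Qt_{\lambda^i}$ to an arbitrary polynomial by linearity (as in the proof of Corollary \ref{Prop:trivial}), one gets
\[ \widetilde N^0_{C,e'}(P';\Oc^{\oplus 2n}) \ = \ 2^{n(g-1)+e'}\sum_{J\in\mathcal{I}_{n+1}^e}\{S_{\rho_n}(\zeta^J)\}^{g-1}\,P'(\zeta^J) \]
whenever $\deg P'(\alpha)=D(n,e',0)$. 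The two cases of the theorem will then correspond to the parity of $\ell$: tensoring by a line bundle and the twist $W\mapsto W(D)$ preserve the parity of $\ell$, whereas a single symplectic Hecke transform $W\mapsto W^\Lambda$ flips it and, by Lemma \ref{Prop:Hecke-GW}, inserts exactly one factor of $\Qt_{\rho_n}$ into the polynomial; this is what produces the extra $\Qt_{\rho_n}(\zeta^J)$ when $\ell=2m-1$.

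First I would discard the case $\deg P(\alpha)\ne D(n,e,\ell)$, where both sides vanish, and assume $LQ_e(W)\ne\emptyset$ so that $\widetilde N^w_{C,e}(P;W)$ is defined. Using Proposition \ref{prop:recursive} I would then replace $(e,P)$ by $(e-nk,\,P\cdot\Qt_{\rho_n}^{2k}(\alpha))$ for $k\gg0$; on the right-hand side of the theorem this multiplies the $J$-th term by $2^{-nk}\Qt_{\rho_n}(\zeta^J)^{2k}$, which equals $1$ thanks to the identity $\Qt_{\rho_n}(\zeta^J)^2=2^n$ for $J\in\mathcal{I}_{n+1}^e$ (a standard property of the point class in $qH^*(\lg)$, cf.\ \cite{KT}). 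Hence the statement for $(e,P)$ is equivalent to that for $(e-nk,P\Qt_{\rho_n}^{2k})$, and we may assume $e$ so negative that all Lagrangian Quot schemes occurring below are nonempty, uniformly over the bundles involved (Proposition \ref{prop:irreducible}).

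Now for the reduction. If $\ell=2m$, write $L\cong L_0\otimes\Oc(2D_0)$ with $\deg L_0=0$ and $\deg D_0=m$; Proposition \ref{Prop:tranform-GW} gives $\widetilde N^{2mn}_{C,e}(P;W)=\widetilde N^0_{C,e-mn}(P;W(-D_0))$ with $W(-D_0)$ a degree-$0$ symplectic bundle. Since $e-mn$ lies in the safe range, Lemma \ref{lemma: Ntilde}(2) allows me to connect $W(-D_0)$ to $\Oc^{\oplus 2n}$ by a chain of families of degree-$0$ symplectic bundles (such a chain exists because the moduli of such bundles, allowing the twisting line bundle to vary over $\mathrm{Pic}^0(C)$, is connected), so that $\widetilde N^0_{C,e-mn}(P;W(-D_0))=\widetilde N^0_{C,e-mn}(P;\Oc^{\oplus 2n})$; the displayed formula then yields the first case, with $A=2^{n(g-1)+e-mn}$. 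If $\ell=2m-1$, pass first to $W^\Lambda$ for a general Lagrangian $\Lambda\subset W_p$, an $L(p)$-valued symplectic bundle with $\deg L(p)=2m$. The $\widetilde N$-version of Lemma \ref{Prop:Hecke-GW} — obtained by choosing a large common symplectic Hecke transform of $W$ and $W^\Lambda$ enjoying property $\cP$ (constructed as in Lemma \ref{lemma-heckeP1}) and comparing the defining expressions for $\widetilde N$ — gives $\widetilde N^{(2m-1)n}_{C,e}(P;W)=\widetilde N^{2mn}_{C,e}(\Qt_{\rho_n}(\alpha)P(\alpha);W^\Lambda)$, and applying the even case to $W^\Lambda$ with polynomial $\Qt_{\rho_n}P$ produces the second case.

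I expect the main obstacle to be the passage from the given $W$ to $\Oc^{\oplus 2n}$: Lemma \ref{lemma: Ntilde}(2) only yields invariance of $\widetilde N$ along families in which $LQ_e$ never becomes empty, so the preliminary descent in $e$ via Proposition \ref{prop:recursive} is genuinely needed, and it is exactly this detour that forces the elementary but essential identity $\Qt_{\rho_n}(\zeta^J)^2=2^n$. A secondary technical point is extending Lemma \ref{Prop:Hecke-GW} beyond the property-$\cP$ hypothesis via a common Hecke transform, in the manner of the proof of Lemma \ref{lemma: Ntilde}(1); the remaining steps are formal applications of the transformation rules established in \S\S 4--5.
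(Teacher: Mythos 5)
Your proposal is correct and takes essentially the same route as the paper: reduce the even-$\ell$ case to a degree-zero bundle via Proposition \ref{Prop:tranform-GW} and then to $\Oc^{\oplus 2n}$ using the invariance in Lemma \ref{lemma: Ntilde}, handle odd $\ell$ by a single symplectic Hecke transform as in Lemma \ref{Prop:Hecke-GW}, and conclude with Corollary \ref{Prop:trivial} and the Vafa--Intriligator formula of Proposition \ref{vafa-Int-g}. Your additional precautions --- the preliminary descent in $e$ via Proposition \ref{prop:recursive} checked against the identity $\Qt_{\rho_n}(\zeta^J)^2 = 2^n$, and the $\widetilde{N}$-version of the Hecke lemma obtained through a common Hecke transform --- are valid and merely make explicit points the paper's shorter proof leaves implicit.
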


\begin{proof}
For the case $w = 2mn$, we take a line bundle $\Xi$ on $C$ of degree $-m$, so that $\tW := W \otimes \Xi$ is a symplectic bundle of degree $0$ over $C$. Then by Proposition \ref{Prop:tranform-GW}, we have
\[ \widetilde{N}_{C,e}^w ( P(\alpha) ; W ) \ = \ \widetilde{N}_{C,e-mn}^0 ( P(\alpha) ; \tW ) . \]
Thus the result follows from Propositions \ref{vafa-Int-g} and  \ref{Prop:trivial}.

If $w = (2m-1)n$, by Lemma \ref{Prop:Hecke-GW} we have
\[ \widetilde{N}_{C,e}^w ( P(\alpha) ; W ) \ = \ \widetilde{N}_{C,e}^{w+n} ( \Qt_{\rho_n}(\alpha) P(\alpha) ; W^H ) \]
for some Hecke transform $W^H$ of degree $w+n$. Since $w+n = 2mn$, we are reduced to the previous case. \end{proof}

Now let $P$ be the constant polynomial $1$, and assume $W$ is general (for example, very stable). Here the invariant $\widetilde{N}_{C,e}^w (1 ; W)$ is precisely the number of maximal Lagrangian subbundles of $W$. Recall from Lemma  \ref{lemma: Ntilde} that in this case $\widetilde{N}_{C,e}^w (1 ; W)$ depends only on the genus of $C$, so we denote it by $N (g, n, \ell, e)$. The following is immediate from Theorem \ref{Theorem: intersection}.

\begin{cor} \label{counting_formula} Let $W$ be a general stable symplectic  bundle over $C$ of rank $2n$ and degree $w = n \ell$, where $n(\ell - g + 1)$ is even. Let $e = \frac{1}{2}n(\ell - g + 1)$. Then the number $N (g, n, \ell, e)$ of maximal Lagrangian subbundles is given by
\begin{displaymath} N (g, n, \ell, e) \ = \
\begin{cases}
 \scriptstyle{{B_1} \sum_{J \in \mathcal{I}_{n+1}^e}\big
\{S_{\rho_{n}}(\zeta^J)\big\}^{g-1}} & \text{if} \ \: \ell = 2m , \\
\scriptstyle {\scriptstyle{{B_2} \sum_{J \in \mathcal{I}_{n+1}^e}\big
\{S_{\rho_{n}}(\zeta^J)\big\}^{g-1} {\Qt}_{\rho_n}(\zeta^J)}}
& \text{if} \ \: \ell = 2m-1 ,
\end{cases}
\end{displaymath}
where $B_1 = \mathlarger{\sqrt{2}}^{n(g-1)}$ and $B_2 = \mathlarger{\sqrt{2}}^{n(g-2)}$.
\end{cor}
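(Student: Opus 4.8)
The plan is to derive Corollary \ref{counting_formula} as a direct specialization of Theorem \ref{Theorem: intersection}, so most of the work has already been done; what remains is to check that the hypotheses line up and that the constants simplify as claimed. First I would justify that $\widetilde{N}_{C,e}^w(1;W)$ is indeed the number of maximal Lagrangian subbundles: by Proposition \ref{Prop-zero}, since $n(\ell-g+1)$ is even, we have $e_0 = \frac12 n(\ell-g+1) = e$, a maximal Lagrangian subbundle of a general $W$ has degree $e_0$, $LQ_{e_0}(W) = LQ_{e_0}^\circ(W)$, and this scheme is smooth of dimension zero --- in particular it has property $\cP$. Hence $N_{C,e}^w(1;W)$ makes sense, equals $\widetilde{N}_{C,e}^w(1;W)$ by Proposition \ref{definitions coincide}, and by Proposition \ref{Prop-Open part} (with $s=t=0$, $P=1$) it literally counts the points of $LQ_{e_0}^\circ(W)$, i.e. the maximal Lagrangian subbundles. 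The deformation invariance in Lemma \ref{lemma: Ntilde}(2) shows this number depends only on $g,n,\ell,e$, licensing the notation $N(g,n,\ell,e)$.

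Next I would observe that the dimension constraint $\deg P(\alpha) = D(n,e,\ell)$ required in Theorem \ref{Theorem: intersection} is satisfied for $P=1$ precisely because $D(n,e,\ell) = -(n+1)e - \frac{n(n+1)}{2}(g-1-\ell) = 0$ when $e = \frac12 n(\ell-g+1)$; this is the same computation underlying $e_0$. So Theorem \ref{Theorem: intersection} applies verbatim with $P(\alpha)=1$, giving
\[
\widetilde{N}_{C,e}^w(1;W) \ = \
\begin{cases}
A \sum_{J\in\mathcal{I}_{n+1}^e} \{S_{\rho_n}(\zeta^J)\}^{g-1} & \ell = 2m,\\
A \sum_{J\in\mathcal{I}_{n+1}^e} \{S_{\rho_n}(\zeta^J)\}^{g-1} \Qt_{\rho_n}(\zeta^J) & \ell = 2m-1,
\end{cases}
\]
with $A = 2^{n(g-1)+e-mn}$. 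The final task is a bookkeeping check that $A = B_1 = (\sqrt{2})^{n(g-1)}$ when $\ell = 2m$, and $A = B_2 = (\sqrt{2})^{n(g-2)}$ when $\ell = 2m-1$. In the even case $\ell = 2m$ we have $e = \frac12 n(\ell - g + 1) = mn - \frac12 n(g-1)$, so $e - mn = -\frac12 n(g-1)$ and the exponent is $n(g-1) - \frac12 n(g-1) = \frac12 n(g-1)$, i.e. $A = (\sqrt 2)^{n(g-1)}$. In the odd case $\ell = 2m-1$ we have $e = \frac12 n(\ell-g+1) = \frac12 n(2m - g) = mn - \frac12 ng$, so $e - mn = -\frac12 ng$ and the exponent is $n(g-1) - \frac12 ng = \frac12 n(g-2)$, i.e. $A = (\sqrt 2)^{n(g-2)}$. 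This matches $B_1$ and $B_2$.

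There is no serious obstacle: the corollary is essentially a substitution into the main theorem together with the arithmetic of $e_0$. The only point requiring a word of care is the passage from the abstract intersection number $\widetilde{N}_{C,e}^w(1;W)$ to the enumerative statement "number of maximal Lagrangian subbundles" --- one must invoke both parts of Proposition \ref{Prop-zero} (to know that degree $e_0$ is the maximal one, that $LQ_{e_0}(W)$ is smooth of dimension zero, and that there are no nonsaturated points) and Proposition \ref{Prop-Open part} (to know that $\widetilde N$ with $P=1$ counts exactly those points, with multiplicity one since the scheme is reduced). Both are already established in the excerpt, so the proof is short. I would also remark that stability of $W$ is used only through generality in moduli, ensuring very stability and hence the hypotheses of Proposition \ref{Prop-zero}.
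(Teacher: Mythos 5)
Your proposal is correct and follows essentially the same route as the paper: the corollary is obtained by specializing Theorem \ref{Theorem: intersection} to $P(\alpha)=1$, using Proposition \ref{Prop-zero} (with Propositions \ref{definitions coincide} and \ref{Prop-Open part}) to identify $\widetilde{N}_{C,e}^w(1;W)$ with the number of maximal Lagrangian subbundles of a general $W$, and Lemma \ref{lemma: Ntilde} for the independence of $W$ and $C$. Your arithmetic reducing $A=2^{n(g-1)+e-mn}$ to $B_1=(\sqrt{2})^{n(g-1)}$ for $\ell=2m$ and $B_2=(\sqrt{2})^{n(g-2)}$ for $\ell=2m-1$ is exactly the bookkeeping the paper leaves implicit, and it checks out.
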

Using this formula, we compute by hand the number of maximal Lagrangian subbundles of a general $W$ of rank $2n \le 4$.

\begin{cor} \label{examples} For  $g \ge 2$ and $e = \frac{1}{2}n(\ell - g + 1)$, we have the following.
\begin{enumerate}
\item $n = 1, \: \ell \not\equiv g \mod 2$: \ $N (g, 1, \ell, e) = 2^g$.
%\item $n=2, \: g=2, \: \ell$ odd: \ $N_{2,0}^2  = 4$.
%\item $n=2, \: g=2, \: \ell$ even: \ $N_{2,-1}^0 = 16$.
\item $n = 2, \: g $ even, $\ell$ odd: \ $N(g,2,-1,-g) = 2^{g-1} (3^g + 1)$.
\item $n = 2, \: g $ even, $\ell$ even: \ $N(g,2,0,-g+1) = 2^{g-1} (3^g - 1)$.
\item $n = 2, \: g $ odd, $\ell$ odd: \ $N(g,2,-1,-g) = 2^{g-1} (3^g - 1)$.
\item $n = 2, \: g $ odd, $\ell$ even: \ $N(g,2,0,-g+1) = 2^{g-1} (3^g + 1)$.
\end{enumerate}
\end{cor}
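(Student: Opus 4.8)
The formula of Corollary \ref{counting_formula} is completely explicit, so the plan is simply to specialize it to $n=1$ and to $n=2$ and evaluate the resulting finite sums. In each case this amounts to (i) listing the index set $\mathcal{I}_{n+1}^e$, (ii) computing $S_{\rho_n}(\zeta^J)$ for each $J$ in it, and, when $\ell$ is odd, (iii) computing $\Qt_{\rho_n}(\zeta^J)$. Two preliminary remarks keep (ii) and (iii) manageable. First, evaluating the partition $\rho_n = (n,n-1,\ldots,1)$ as a symmetric function of the $n+1$ quantities $\zeta^{j_1},\ldots,\zeta^{j_{n+1}}$, the bialternant formula gives
\[ S_{\rho_n}(\zeta^J) \ = \ \prod_{1 \le a < b \le n+1} \left( \zeta^{j_a} + \zeta^{j_b} \right) , \]
since the numerator is a Vandermonde determinant in the $\zeta^{2j_a}$; also $\Qt_{\rho_1} = E_1$ and $\Qt_{\rho_2} = E_1 E_2 - 2 E_3$. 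Second, for $J \in \mathcal{I}_{n+1}^e$ we have $\sum_a j_a \equiv 0$ modulo the order $2(n+1)$ of $\zeta$, whence $E_{n+1}(\zeta^J) = 1$ and $E_k(\zeta^J) = \overline{E_{n+1-k}(\zeta^J)}$; in particular $\Qt_{\rho_2}(\zeta^J) = |E_1(\zeta^J)|^2 - 2$ is real.

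For $n=1$ we have $\zeta = \sqrt{-1}$. The set $\mathcal{T}_2$ consists of the six pairs drawn from $\{-\tfrac12,\tfrac12,\tfrac32,\tfrac52\}$; discarding the two pairs with $\zeta^{j_a} = -\zeta^{j_b}$ leaves four, and imposing $\prod_a \zeta^{j_a} = 1$ selects $\mathcal{I}_2^e = \{(-\tfrac12,\tfrac12),(\tfrac32,\tfrac52)\}$, on which $S_{\rho_1}(\zeta^J) = \zeta^{j_1}+\zeta^{j_2}$ equals $\sqrt2$ and $-\sqrt2$ respectively. Substituting into Corollary \ref{counting_formula} --- using $B_1 = 2^{(g-1)/2}$ when $\ell$ is even (so that $g$ is odd and $(\sqrt2)^{g-1}+(-\sqrt2)^{g-1} = 2\cdot 2^{(g-1)/2}$), and $B_2 = 2^{(g-2)/2}$ together with the extra factor $\Qt_{\rho_1}(\zeta^J) = S_{\rho_1}(\zeta^J)$ when $\ell$ is odd (so that $\{S_{\rho_1}(\zeta^J)\}^{g-1}\Qt_{\rho_1}(\zeta^J) = \{S_{\rho_1}(\zeta^J)\}^g$ and $g$ is even) --- gives $N(g,1,\ell,e) = 2^g$ in both parities, which recovers the classical count of Segre and Nagata.

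For $n=2$ we have $\zeta = \exp(\pi\sqrt{-1}/3)$, of order $6$, and $\mathcal{T}_3$ is the set of $3$-subsets of $\{-1,0,1,2,3,4\}$. Since $-\zeta^j = \zeta^{j+3}$, membership in $\mathcal{I}_3$ forbids taking both elements of any of the pairs $\{-1,2\}$, $\{0,3\}$, $\{1,4\}$, so $\mathcal{I}_3$ consists of the $8$ transversals of this partition; the further condition $\sum_a j_a \equiv 0 \pmod 6$ singles out the four sets $\{-1,0,1\}$, $\{1,2,3\}$, $\{0,2,4\}$, $\{-1,3,4\}$. A short computation using the remarks above yields $S_{\rho_2}(\zeta^J) = 3,3,-1,3$ and $\Qt_{\rho_2}(\zeta^J) = 2,2,-2,2$ on these four points, so $\sum_J \{S_{\rho_2}(\zeta^J)\}^{g-1} = 3^g + (-1)^{g-1}$ and $\sum_J \{S_{\rho_2}(\zeta^J)\}^{g-1}\Qt_{\rho_2}(\zeta^J) = 2(3^g - (-1)^{g-1})$. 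Plugging these into the two cases of Corollary \ref{counting_formula}, with $B_1 = 2^{g-1}$ for $\ell$ even and $B_2 = 2^{g-2}$ for $\ell$ odd, produces $2^{g-1}(3^g-1)$ or $2^{g-1}(3^g+1)$ according to the parity of $g$, exactly matching items (2)--(5).

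There is no conceptual obstacle: the closed formula is already in hand, and the work is purely bookkeeping. The only points demanding care are the exact enumeration of $\mathcal{T}_{n+1}$, $\mathcal{I}_{n+1}$ and $\mathcal{I}_{n+1}^e$ (it is easy to be off by one element), and the evaluation of $S_{\rho_n}$ and $\Qt_{\rho_n}$ at the $(n+1)$-tuples $\zeta^J$, where the identities $E_{n+1}(\zeta^J) = 1$ and $E_k(\zeta^J) = \overline{E_{n+1-k}(\zeta^J)}$ are what keep the sums finite and transparent.
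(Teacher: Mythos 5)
Your proposal is correct and is exactly the computation the paper intends: the paper offers no written proof beyond ``we compute by hand'' from Corollary \ref{counting_formula}, and your enumeration of $\mathcal{I}_2^e$ and $\mathcal{I}_3^e$, the values $S_{\rho_2}(\zeta^J)=3,3,-1,3$ and $\Qt_{\rho_2}(\zeta^J)=2,2,-2,2$, and the final substitutions with $B_1,B_2$ all check out and reproduce items (1)--(5).
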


\begin{remark} In (1), the number $2^g$ coincides with the number of maximal line subbundles of a general rank 2 vector bundle obtained in \cite{Se} and \cite{Na}. This can be explained by the fact that any rank 2 vector bundle $V$ has a symplectic structure given by $V \cong V^\vee \otimes \det(V)$, and any line subbundle is Lagrangian. \end{remark}

\begin{remark} By Holla \cite[Theorem 4.2]{Ho}, if $g = 2$, the number of maximal rank 2 subbundles of a general rank 4 vector bundle $V$ is 24 (resp., 40), if $\deg (V) \equiv 2 \mod 4$ (resp., $\deg (V) \equiv 0 \mod 4)$. These can be compared with the numbers 20 and 16 given by (2) and (3) respectively.

It should be noted that \cite[Theorem 2]{Hit2}, in our language, states incorrectly that $N(2,2,0,-1) = 24$. This is due to a mistake in the geometric argument on \cite[p.\ 270]{Hit2}. The correct statement of \cite[Theorem 2]{Hit2} is that the moduli map $\Phi$ is surjective and generically finite of degree $20$. \end{remark}

\end{document}